\documentclass[platex]{amsart}
\usepackage{amssymb}
\usepackage{braket}
\usepackage{mathrsfs}
\usepackage{ifthen}
\usepackage{graphicx}
\usepackage{tikz}
\usetikzlibrary{patterns}
\usepackage{comment} 
\usepackage{mleftright}
\usepackage{cleveref}
\usepackage[all]{xy}
\usepackage{amscd}
\usepackage{amsrefs}

\theoremstyle{plain}
\newtheorem{theo}{Theorem}[section]
\crefname{theo}{Theorem}{Theorems}
\Crefname{theo}{Theorem}{Theorems}

\crefname{prop}{Proposition}{Propositions}
\Crefname{prop}{Proposition}{Propositions}
\newtheorem{lem}[theo]{Lemma}
\crefname{lem}{Lemma}{Lemmas}
\Crefname{lem}{Lemma}{Lemmas}
\newtheorem{cor}[theo]{Corollary}
\crefname{cor}{Corollary}{Corollaries}
\Crefname{cor}{Corollary}{Corollaries}

\crefname{claim}{Claim}{Claims}
\Crefname{claim}{Claim}{Claims}

\crefname{property}{Property}{Properties}
\Crefname{property}{Property}{Properties}

\crefname{problem}{Problem}{Problems}
\Crefname{problem}{Problem}{Problems}

\theoremstyle{definition}

\crefname{defi}{Definition}{Definitions}
\Crefname{defi}{Definition}{Definitions}

\crefname{notation}{Notation}{Notations}
\Crefname{notation}{Notation}{Notations}

\crefname{convention}{Convention}{Conventions}
\Crefname{convention}{Convention}{Conventions}

\crefname{cond}{Condition}{Conditions}
\Crefname{cond}{Condition}{Conditions}

\crefname{assum}{Assumption}{Assumptions}
\Crefname{assum}{Assumption}{Assumptions}

\theoremstyle{remark}
\newtheorem{rem}[theo]{Remark}
\crefname{rem}{Remark}{Remarks}
\Crefname{rem}{Remark}{Remarks}
\newtheorem{ex}[theo]{Example}
\crefname{ex}{Example}{Examples}
\Crefname{ex}{Example}{Examples}

\crefname{section}{Section}{Sections}
\Crefname{section}{Section}{Sections}
\crefname{subsection}{Subsection}{Subsections}
\Crefname{subsection}{Subsection}{Subsections}
\crefname{figure}{Figure}{Figures}
\Crefname{figure}{Figure}{Figures}

\newtheorem*{acknowledgement}{Acknowledgement}

\newcommand{\Z}{\mathbb{Z}}

\newcommand{\R}{\mathbb{R}}
\newcommand{\C}{\mathbb{C}}
\newcommand{\quat}{\mathbb{H}}
\newcommand{\Q}{\mathbb{Q}}
\newcommand{\CP}{\mathbb{CP}}
\newcommand{\F}{\mathbb{F}}

\newcommand{\pt}{\mathrm{pt}}
\newcommand{\Pin}{\mathrm{Pin}}

\newcommand{\fraks}{\mathfrak{s}}
\newcommand{\frakt}{\mathfrak{t}}

\newcommand{\Diff}{\mathrm{Diff}}
\newcommand{\Homeo}{\mathrm{Homeo}}
\newcommand{\Aut}{\mathrm{Aut}}

\newcommand{\inc}{\hookrightarrow}

\newcommand{\del}{\partial}

\newcommand{\Ker}{\mathop{\mathrm{Ker}}\nolimits}
\newcommand{\Coker}{\mathop{\mathrm{Coker}}\nolimits}
\newcommand{\im}{\mathop{\mathrm{Im}}\nolimits}
\newcommand{\rank}{\mathop{\mathrm{rank}}\nolimits}

\newcommand{\Th}{\mathop{\mathrm{Th}}\nolimits}

\newcommand{\tilR}{\tilde{\mathbb{R}}}

\newcommand{\id}{\mathrm{id}}
\newcommand{\ind}{\mathop{\mathrm{ind}}\nolimits}

\newcommand{\frk}{\mathfrak}
\newcommand{\ks}{\mathop{\mathrm{ks}}\nolimits}

\def\t{\text}
\def\C{\mathbb{C}}

\def\D{\not\!\partial}

\def\wt{\widetilde}

\newcommand{\G}{\mathcal G}

\newcommand{\pr}{\text{pr}}
\newcommand{\Om}{\Omega}

\title[Diffeomorphisms and homeomorphisms of 4-manifolds with boundary]{The groups of diffeomorphisms and homeomorphisms of 4-manifolds with boundary}

\author{Hokuto Konno}
\address{Graduate School of Mathematical Sciences, the University of Tokyo, 3-8-1 Komaba, Meguro, Tokyo 153-8914, Japan}
\email{konno@ms.u-tokyo.ac.jp}

\author{Masaki Taniguchi}
\address{2-1 Hirosawa, Wako, Saitama 351-0198, Japan}
\email{masaki.taniguchi@riken.jp}

\begin{document}

\maketitle

\begin{abstract}
We give constraints on smooth families of 4-manifolds with boundary using Manolescu's Seiberg--Witten Floer stable homotopy type, provided that the fiberwise restrictions of the families to the boundaries are trivial families of 3-manifolds.
As an application, we show that, for a simply-connected oriented compact smooth 4-manifold $X$ with boundary with an assumption on the Fr{\o}yshov invariant or the Manolescu invariants $\alpha, \beta, \gamma$ of $\partial X$, 
the inclusion map $\mathrm{Diff}(X,\partial) \hookrightarrow \mathrm{Homeo}(X,\partial)$ between the groups of diffeomorphisms and homeomorphisms which fix the boundary pointwise is not a weak homotopy equivalence.
This combined with a classical result in dimension 3 implies that the inclusion map $\mathrm{Diff}(X) \hookrightarrow \mathrm{Homeo}(X)$ is also not a weak homotopy equivalence under the same assumption on $\del X$.
Our constraints generalize both of constraints on smooth families of closed 4-manifolds proven by Baraglia and a Donaldson-type theorem for smooth 4-manifolds with boundary originally due to Fr{\o}yshov.
\end{abstract}

\tableofcontents

\section{Introduction}
\label{section: Introduction}

The main purpose of this paper is to give constraints on smooth families of 4-manifolds with boundary using Manolescu's Seiberg--Witten Floer stable homotopy type~\cite{Ma03}, provided that the fiberwise restrictions of the families to the boundaries are trivial families of $3$-manifolds.
As an application, we show that, for a simply-connected oriented compact smooth 4-manifold $X$ with boundary with an assumption on the Fr{\o}yshov invariant of $\partial X$, 
the inclusion map
\begin{align*}
\mathrm{Diff}(X,\partial) \hookrightarrow \mathrm{Homeo}(X,\partial)
\end{align*}
is not a weak homotopy equivalence, where $\mathrm{Diff}(X,\partial)$ and $\mathrm{Homeo}(X,\partial)$ denote the groups of diffeomorphisms and homeomorphisms which fix the boundary pointwise respectively.
When $X$ is spin, the assumption on $\del X$ may be replaced with a similar assumption described in terms of the Manolescu invariants $\alpha, \beta, \gamma$.
This result combined with a classical theorem in dimension 3 implies that the inclusion map $\mathrm{Diff}(X) \hookrightarrow \mathrm{Homeo}(X)$ between the whole groups of diffeomorphisms and homeomorphisms is also not a weak homotopy equivalence under the same assumption on $\del X$.

Our constraints on smooth families of 4-manifolds with boundary have two roots.
The first is a constraint on smooth families of closed 4-manifolds proven by Baraglia~\cite{Ba19}, which can be regarded as a family version of Donaldson's diagonalization theorem.
The second is a constraint on negative-definite smooth $4$-manifolds with boundary originally due to Fr{\o}yshov~\cite{Fr96}, which is a generalization of Donaldson's diagonalization theorem  to $4$-manifolds with boundary.
Roughly speaking, our constraints are combinations of these two.

Let us recall some background of Baraglia's work.
It is classically known that, for a smooth closed manifold of dimension $< 4$, the natural inclusion map from the group of diffeomorphisms into the group of homeomorphisms is a weak homotopy equivalence.
However, in contrast, there are large numbers of examples of manifolds of dimension $\geq 4$ for which the above inclusion maps are not weak homotopy equivalences.
In dimension 4, the lowest dimension where such interesting difference happens, many authors revealed that gauge theory for families provides a strong tool to detect such phenomena.
See for example \cite{Ru98,KKN19,BK18,Ba19,KM20}.
In particular, Baraglia~\cite{Ba19} recently proved that
the inclusions from the diffeomorphism groups into the homeomorphism groups are not weak homotopy equivalences for a huge class of simply-connected closed smooth $4$-manifolds.
This is one of the important ingredients of this paper.

It is natural to try to extend Baraglia's result to $4$-manifolds with boundary.
He obtained his result by giving a constraint on smooth families of closed 4-manifolds, which is a family version of Donaldson's diagonalization theorem as mentioned above.
So a natural way to extend Baraglia's result is to obtain a constraint on smooth families of 4-manifolds with boundary.
We shall carry this out based on an idea of Fr{\o}yshov~\cite{Fr96}.
Although Fr{\o}yshov used monopole Floer homology to derive his constraint, we shall use Manolescu's Seiberg--Witten Floer stable homotopy type.
This is because Baraglia's argument is based on Furuta's idea of finite-dimensional approximation of the Seiberg--Witten equations~\cite{Fu01}, more precisely a family version of the Bauer--Furuta invariant~\cite{BF04}, and therefore we need to consider a family version of the relative Bauer--Furuta invariant, which lives in the Seiberg--Witten Floer stable homotopy type as far as the fiberwise restriction of a given family to the boundary is a trivial family of $3$-manifolds.

To state our main theorem, let us introduce some notations.
In this paper, we shall consider an oriented compact smooth $4$-manifold $X$ with boundary.
Throughout the paper, we shall assume that $b_{1}(X)=0$, and that $\del X = Y$ is a connected oriented rational homology $3$-sphere for simplicity.
As the structure groups of families of $X$, we have three candidates:
\begin{align*}
\Diff(X),\quad \Diff^{+}(X),\quad \Diff(X,\del).
\end{align*}
Here $\Diff(X)$ is the whole group of diffeomorphisms, and $\Diff^{+}(X)$ denotes the group of orientation-preserving diffeomorphisms, and $\Diff(X,\del)$ is the group of diffeomorphisms which fix the boundary pointwise.
Note that any element of $\Diff(X,\del)$ preserves the orientation of $X$.
Note also that, if the signature of $X$ is not zero, we have $\Diff(X) = \Diff^{+}(X)$.
We mainly consider $\Diff(X,\del)$ in this paper.
Similarly, we may define
\begin{align*}
\Homeo(X),\quad \Homeo^{+}(X),\quad \Homeo(X,\del)
\end{align*}
as the corresponding groups of homeomorphisms.
If a spin$^{c}$ structure or a spin structure $\fraks$ is given on $X$, one can define topological groups
\[
\Aut(X,\fraks),\quad \Aut((X,\fraks), \del).
\]
See \cref{rem: spinc wo metric} for the precise definition, but roughly $\Aut(X,\fraks)$ denote the automorphism group of the spin$^{c}$ (or spin) $4$-manifold $(X,\fraks)$, and $\Aut((X,\fraks), \del)$ is the structure group of families of spin$^{c}$ (or spin) $4$-manifolds where trivializations are given for the families of spin$^{c}$ (or spin) $3$-manifolds obtained as the boundaries.

Let $X \to E \to B$ be a $\Homeo(X, \del)$-bundle over a compact topological space $B$.
Then we have an associated vector bundle
\[
\R^{b^{+}(X)} \to H^{+}(E) \to B,
\]
whose isomorphism class is a topological invariant of $E$.
We shall explain $H^{+}(E)$ at the beginning of \cref{subsection: families relative Bauer--Furuta invariant}, but roughly speaking $H^{+}(E)$ is a bundle of maximal-dimensional positive-definite subspaces of $H^{2}$ of the fibers of $E$.
Our constraints on smooth families will be described in terms of $H^{+}(E)$.

For a rational homology $3$-sphere $Y$ with a spin$^{c}$ structure $\frakt$,
we denote by $\delta(Y,\frakt) \in \Q$ the Fr{\o}yshov invariant.
If $Y$ is an integral homology $3$-sphere, we denote by $\delta(Y)$ the Fr{\o}yshov invariant for the unique spin$^{c}$ structure on $Y$.
The sign convention of $\delta$ in this paper is $\delta(\Sigma(2,3,5))=1$, which is the same as the convention of \cite{Ma16}.
More precisely, we use $\delta$ defined by using $\F=\Z/2$-coefficient Seiberg--Witten Floer homology, which is denoted by $\delta_{2}$ in \cite{Ma16}.
(The reason why we use $\F$-coefficient is explained in \cref{rem: why not local system}.)

Now we can state the first main theorem in this paper:

\begin{theo}
\label{theo: main theo}
Let $Y$ be an oriented rational homology $3$-sphere and $X$ be an oriented compact smooth $4$-manifold bounded by $Y$.
Assume that $b_{1}(X)=0$.
Let $\fraks$ be a spin$^{c}$ structure on $X$ and let $\frakt$ be the spin$^{c}$ structure on $Y$ defined as the restriction of $\fraks$.
Let $B$ be a compact topological space and $(X,\fraks) \to E \to B$ a smooth $\Aut((X,\fraks), \del)$-bundle.
If the $b^{+}(X)$-th Stiefel-Whitney class satisfies $w_{b^{+}(X)}(H^{+}(E)) \neq 0$, then we have
\begin{align}
\label{eq: main ineq in main thm}
\frac{c_{1}(\fraks)^{2} - \sigma(X)}{8} \leq \delta(Y,\frakt).
\end{align}
\end{theo}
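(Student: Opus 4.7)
The plan is to construct a parametrized version of Manolescu's relative Bauer--Furuta invariant for the bundle $E \to B$ and then apply Baraglia's Stiefel--Whitney class detection technique, interpreting the output through the Floer-theoretic definition of the Fr\o yshov invariant. Since the boundary family is the trivial family $Y \times B$ with spin$^c$ structure $\frakt$, the Manolescu stable homotopy type $\swf(Y,\frakt)$ is a constant $S^1$-equivariant pointed space over $B$, which is crucial for making sense of a stable map into it from a parametrized spectrum over $B$.

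Applying Manolescu's recipe (double Coulomb gauge fixing on the half-cylinder, Furuta-style finite-dimensional approximation, and Conley index on a large ball) fiberwise and uniformly in $b \in B$ should produce an $S^1$-equivariant parametrized stable map
\[
\mathrm{BF}_E \colon \Th(H^+(E)) \wedge_B \Th(V^+(E)) \longrightarrow \swf(Y,\frakt) \wedge_B \Th(V^-(E)),
\]
where $V^{\pm}(E) \to B$ are the positive and negative parts of the virtual index bundle of the fiberwise family of Dirac operators, with virtual complex index equal to $(c_1(\fraks)^2 - \sigma(X))/8$.

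Following Baraglia, I would detect nontriviality by passing to $S^1$-equivariant Borel cohomology with $\F$-coefficients (this is why the $\F$-coefficient Fr\o yshov invariant is the relevant one): the Thom isomorphism together with the hypothesis $w_{b^+(X)}(H^+(E)) \neq 0$ produces a nonzero class in $H^{b^+(X)}(B;\F)$ that pulls back through $\mathrm{BF}_E$ to a nontrivial class in $H^{*}_{S^1}(\swf(Y,\frakt);\F)$, sitting at a specific degree determined by $b^+(X)$ and the real Dirac index. By definition, the $\F$-coefficient Fr\o yshov invariant $\delta(Y,\frakt)$ is, up to the standard normalization, the minimal degree at which the $S^1$-equivariant Borel $\F$-cohomology tower of $\swf(Y,\frakt)$ admits a generator coming from $H^{*}_{S^1}(\pt;\F)$. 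The class constructed above must therefore lie at or above this threshold, and substituting the index formula translates this directly into the asserted inequality $(c_1(\fraks)^2-\sigma(X))/8 \leq \delta(Y,\frakt)$.

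The main obstacle is the parametrized construction in Step 1: Manolescu's single-manifold recipe requires choosing a sufficiently large Conley-index neighborhood to capture all bounded trajectories, and this must be arranged uniformly in $b \in B$, together with uniform $L^2$ a priori estimates on Seiberg--Witten trajectories on $[0,\infty) \times Y$ and uniform control of the Coulomb projection as the fiberwise metric varies over the compact base. Once this parametrized refinement of Manolescu's construction is in hand, the detection step and the Fr\o yshov-threshold step closely mirror the single-manifold and closed-family arguments and combine formally to yield the inequality.
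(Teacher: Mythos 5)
Your proposal takes essentially the same approach as the paper: build a families relative Bauer--Furuta map by carrying out Manolescu's double-Coulomb finite-dimensional approximation and Conley-index construction uniformly over the compact base $B$ (possible because the boundary family is trivialized), restrict to $S^1$-fixed point sets, apply the equivariant Thom isomorphism in $\F$-coefficients, and compare the top $U$-degree term against the threshold degree $d$ (the minimal degree supporting a $U$-non-nilpotent class) that computes $\delta$ -- and you correctly flag that the uniform a priori estimates over $B$ are the technical crux, which is exactly where the paper invests most of its effort in Section 2.3.

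One correction worth internalizing before writing details: $\Th(H^{+}(E))$ belongs in the \emph{codomain} of the Bauer--Furuta map, not the domain. The $S^1$-fixed linearization (essentially $d^{+}$ together with a boundary restriction in double Coulomb gauge) is fiberwise injective with cokernel isomorphic to $H^{+}(E)$, so after stabilization the map has the shape $\Th(\ind D) \to \Th(H^{+}(E)) \wedge_{B} \underline{SWF(Y,\frakt)}$. As written, your $\Th(H^{+}(E)) \wedge_{B} \Th(V^{+}(E)) \to \swf(Y,\frakt) \wedge_{B} \Th(V^{-}(E))$ puts it on the wrong side; carrying that version through the degree count would flip the inequality. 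Relatedly, when you restrict to fixed points you need not just the homotopy type $(I_\lambda^\mu)^{S^1} \simeq V_\lambda^0(\R)^{+}$ but a homotopy equivalence compatible with the restricted Bauer--Furuta map (the paper's \cref{lem: htpy NL}), which is what lets the Thom class of the cokernel bundle $H^{+}(E)$ actually appear in the resulting divisibility relation.
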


\Cref{theo: main theo} is an analog of Baraglia's constraint~\cite[Theorem 1.1]{Ba19} for families of spin$^{c}$ $4$-manifolds with boundary.
For the case that $B=\{\pt\}$, \cref{theo: main theo} recovers a special case of the constraint due to Fr{\o}yshov~\cite{Fr96} on the intersection form of a negative-definite smooth $4$-manifold with boundary.

For spin $4$-manifolds with boundary, we have a refinement of \cref{theo: main theo} using the Manolescu invariants $\alpha, \beta, \gamma$ defined in \cite{Ma16}, instead of $\delta$:

\begin{theo}
\label{theo: main theo2}
Let $Y$ be an oriented rational homology $3$-sphere and $X$ be an oriented compact smooth $4$-manifold bounded by $Y$.
Assume that $b_{1}(X)=0$.
Let $\fraks$ be a spin structure on $X$ and let $\frakt$ be the spin structure on $Y$ defined as the restriction of $\fraks$.
Let $B$ be a compact topological space and $(X,\fraks) \to E \to B$ a smooth $\Aut((X,\fraks), \del)$-bundle.
Then:
\begin{itemize}
\item If $w_{b^{+}(X)}(H^{+}(E)) \neq 0$ holds, then we have
\begin{align}
\label{eq: main ineq in main thm2}
\frac{-\sigma(X)}{8} \leq \gamma(Y,\frakt).
\end{align}
\item If $b^{+}(X) >0$ and $w_{b^{+}(X)-1}(H^{+}(E)) \neq 0$ holds, then we have
\begin{align}
\label{eq: main ineq in main thm3}
\frac{-\sigma(X)}{8} \leq \beta(Y,\frakt).
\end{align}
\item If $b^{+}(X) > 1$ and $w_{b^{+}(X)-2}(H^{+}(E)) \neq 0$ holds, then we have
\begin{align}
\label{eq: main ineq in main thm4}
\frac{-\sigma(X)}{8} \leq \alpha(Y,\frakt).
\end{align}
\end{itemize}
\end{theo}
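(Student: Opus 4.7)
The plan is to mirror the proof of \cref{theo: main theo}, upgrading the $S^1$-equivariant Seiberg--Witten Floer stable homotopy type used there to Manolescu's $\Pin(2)$-equivariant refinement. When $\fraks$ is a genuine spin structure, the Seiberg--Witten equations acquire an extra $\Z/2$-symmetry generated by the quaternionic involution $j$, so the natural symmetry group is $\Pin(2)$. The element $j$ acts as $-1$ on self-dual $2$-forms and quaternionically on the positive spinors, so the bundle $H^+(E)\to B$ naturally becomes a $\Pin(2)$-bundle on which $S^1$ acts trivially and $j$ acts by $-1$; that is, a bundle of copies of the sign representation $\tilR$ of $\Pin(2)/S^1 \cong \Z/2$.

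First I would construct a parametrized $\Pin(2)$-equivariant family relative Bauer--Furuta invariant for $(X,\fraks)\to E \to B$. The boundary triviality hypothesis ensures that the boundary Conley index is the constant family $\swf(Y,\frakt)$ over $B$, and fiberwise finite-dimensional approximation of the Seiberg--Witten equations yields, after a suitable index-theoretic desuspension, a $\Pin(2)$-equivariant stable map of parametrized spectra over $B$ between the Thom spectrum of $H^+(E)$ and a shift of $\swf(Y,\frakt)$, where the shift in the quaternionic $\mathbb{H}$-direction is $-\sigma(X)/16$ via the index of the spin Dirac operator. This is the direct spin-family analogue of the $S^1$-equivariant family Bauer--Furuta invariant used in the proof of \cref{theo: main theo}, executed $\Pin(2)$-equivariantly.

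Next I would pass to $\Pin(2)$-equivariant Borel cohomology with $\F = \Z/2$ coefficients, using the standard identification $H^{\ast}_{\Pin(2)}(\mathrm{pt};\F) \cong \F[q,v]/(q^3)$ with $|q|=1$, $|v|=4$. The Manolescu invariants $\alpha, \beta, \gamma$ can be read off from $\widetilde{H}^{\ast}_{\Pin(2)}(\swf(Y,\frakt);\F)$, regarded as a module over $\F[q,v]/(q^3)$, as the lowest degrees (after a canonical normalization) in which the classes $q^2, q, 1$ respectively generate an infinite $v$-tower. On the other hand, the $\Pin(2)$-equivariant Euler class of the $\tilR$-type bundle $H^+(E)$ expands in $H^{\ast}_{\Pin(2)}(B;\F)$ as a polynomial $q^{b^+(X)} + w_1(H^+(E)) q^{b^+(X)-1} + \cdots + w_{b^+(X)}(H^+(E))$, whose components modulo $q^3$ are detected precisely by $w_{b^+(X)}$, $w_{b^+(X)-1}$, $w_{b^+(X)-2}$. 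Combining this with the family Bauer--Furuta map, the three nonvanishing hypotheses translate into the existence of a nontrivial element in the $1$-, $q$-, or $q^2$-component of $\widetilde{H}^{\ast}_{\Pin(2)}(\swf(Y,\frakt);\F)$ surviving into a $v$-tower; tracking the $-\sigma(X)/16$ shift in the $\mathbb{H}$-direction yields the inequalities $-\sigma(X)/8 \leq \gamma,\beta,\alpha$ respectively.

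The main obstacle is the parametrized $\Pin(2)$-equivariant extension of Manolescu's Conley-index-based construction: one must verify that the finite-dimensional approximation can be carried out continuously in the base parameter $B$ in a way compatible both with the $\Pin(2)$-action and with the splitting of the approximating representations into an $\mathbb{H}$-part (spinors) and an $\tilR$-part (self-dual forms). Once this family relative Bauer--Furuta invariant is in place, the remaining argument is an essentially mechanical identification of Stiefel--Whitney classes with $\Pin(2)$-equivariant Euler class data together with a module computation in $\F[q,v]/(q^3)$.
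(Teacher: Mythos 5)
Your proposal takes essentially the same route as the paper: restrict the $\Pin(2)$-equivariant families relative Bauer--Furuta invariant to $S^1$-fixed points, pass to $\Pin(2)$-equivariant $\F$-cohomology over $\F[q,v]/(q^3)$, expand the equivariant Euler class of $H^+(E)$ in terms of the Stiefel--Whitney classes, and extract degree inequalities from the resulting divisibility relation. One small caveat: you pair the hypotheses $w_{b^+}\neq0$, $w_{b^+-1}\neq0$, $w_{b^+-2}\neq0$ with the ``$1$-, $q$-, $q^2$-components'' of $\widetilde H^{\ast}_{\Pin(2)}(\mathrm{SWF}(Y,\frakt))$, but the actual correspondence in the argument is the reverse: $w_{b^+}$ pairs with the $q^2$-tower (i.e.\ the class $\omega_c$ detecting $\gamma$), $w_{b^+-1}$ with the $q$-tower ($\omega_b$, detecting $\beta$), and $w_{b^+-2}$ with the $1$-tower ($\omega_a$, detecting $\alpha$), since in $\F[q,v]/(q^3)$ only the complementary $q$-degrees multiply to give a surviving $q^2$-term. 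Your final inequalities are nevertheless stated correctly, so this appears to be a slip in the exposition (you likely mean the $q$-components of $e_{\Pin(2)}(H^+(E))$) rather than a conceptual error.
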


\Cref{theo: main theo2} is an analog of Baraglia's constraint~\cite[Theorem 1.2]{Ba19} for families of closed spin $4$-manifolds with boundary.
For the case that $B=\{\pt\}$, F.~Lin \cite[Theorem~7]{FLin17} has proven these inequalities (for $X$ with two boundary components), which are extensions of Donaldson's Theorems~B and C to $4$-manifolds with boundary.

Using \cref{theo: main theo,theo: main theo2}, we may detect non-smoothable topological families of $4$-manifold with boundary, stated in \cref{theo: main app}.
As a consequence, we may detect homotopical difference between $\Diff(X,\del)$ and $\Homeo(X,\del)$ for a large class of $X$ as follows:

\begin{theo}
\label{cor: rel Diff Homeo}
Let $Y$ be an oriented integral homology $3$-sphere.
Let $X$ be a simply-connected, compact, oriented, smooth, and indefinite $4$-manifold with boundary 
$Y$.
Suppose that $\sigma(X) \leq 0$.
Suppose that $X$ and $Y$ satisfy at least one of the following conditions:
\begin{enumerate}
\item $\sigma(X)<-8$ and $\delta(Y) \leq 0$.
\item $\delta(Y) < 0$, and in addition $\sigma(X)<0$ if $X$ is non-spin.
\item $\sigma(X) = -8$, $\delta(Y) = 0$ and $\mu(Y)=1$, where $\mu(Y) \in \Z/2$ denotes the Rohlin invariant.
\item $X$ is spin and $-\sigma(X)/8 > \gamma(Y)$.
\item $X$ is spin, $b^{+}(X) > 1$ and $-\sigma(X)/8 > \beta(Y)$.
\item $X$ is spin, $b^{+}(X) > 2$ and $-\sigma(X)/8 > \alpha(Y)$.
\end{enumerate}
Then the inclusion map
\[
\Diff(X,\del) \inc \Homeo(X,\del)
\]
is not a weak homotopy equivalence.
\end{theo}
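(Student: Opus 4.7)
The plan is a proof by contradiction, following Baraglia's strategy for the closed case \cite{Ba19}. Assume $\Diff(X,\del)\hookrightarrow\Homeo(X,\del)$ is a weak homotopy equivalence; then the induced map $B\Diff(X,\del)\to B\Homeo(X,\del)$ is also a weak equivalence, so every topological $\Homeo(X,\del)$-bundle over a finite CW complex is fibre-homotopy equivalent to a smooth one, and the associated $H^{+}$-bundle is preserved under this equivalence. It therefore suffices to exhibit a topological family $(X,\del)\to E\to B$ whose $H^{+}(E)$ has the Stiefel--Whitney class required by \cref{theo: main theo} or \cref{theo: main theo2} non-zero, while the predicted inequality is violated by the hypotheses on $\sigma(X)$ and $\del X$.

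\medskip

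Such a family would be produced by a finite-dimensional Borel construction. Since $X$ is simply-connected with integral homology $3$-sphere boundary $Y$, a relative form of the Freedman--Quinn realization theorem yields, for any integral isometry of $Q_{X}$, a self-homeomorphism of $X$ fixing $\del X$ pointwise that realizes it. In particular I may choose commuting involutions $f_{1},\dots,f_{k}\in\Homeo(X,\del)$ whose combined action on $H^{+}(X)$ furnishes a $(\Z/2)^{k}$-representation tailored to make the desired Stiefel--Whitney class non-zero. For $N\gg 0$ the Borel construction
\[
E \;:=\; (S^{N})^{k}\times_{(\Z/2)^{k}} X \;\longrightarrow\; (\mathbb{RP}^{N})^{k}
\]
then has $H^{+}(E)$ a Whitney sum of tensor products of tautological line bundles, and the non-vanishing of the needed class follows from an elementary splitting-principle computation. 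In cases (1)--(4) a single involution $f$ with $f^{\ast}=-\id$ on $H^{+}(X)$ already suffices, giving $w_{b^{+}(X)}(H^{+}(E))=x^{b^{+}(X)}\neq 0$ for $x$ the generator of $H^{1}(\mathbb{RP}^{N};\F_{2})$; cases (5)--(6) may require several involutions in order to put the required $w_{b^{+}(X)-1}$ or $w_{b^{+}(X)-2}$ into an odd-binomial-coefficient configuration.

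\medskip

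Choosing an $f_{i}$-invariant spin or spin$^{c}$ structure $\fraks$ on $X$ restricting to $\frakt$ on $Y$, the weak-equivalence hypothesis promotes $E$ to a smooth $\Aut((X,\fraks),\del)$-bundle to which \cref{theo: main theo} or \cref{theo: main theo2} applies. The resulting inequality is $(c_{1}(\fraks)^{2}-\sigma(X))/8\le\delta(Y,\frakt)$ in cases (1)--(3) and $-\sigma(X)/8\le\gamma,\beta,\alpha$ in cases (4)--(6). For (4)--(6) the hypothesis directly contradicts this inequality since $c_{1}(\fraks)=0$ for the spin structure $\fraks$. For (1)--(3) I would use Wu's formula together with a lattice argument to select a characteristic vector $c_{1}(\fraks)$ with $(c_{1}(\fraks)^{2}-\sigma(X))/8>\delta(Y,\frakt)$; Elkies' theorem handles the borderline case (3), where the Rokhlin invariant $\mu(Y)=1$ forces $\sigma(X)/8$ to be odd and thus rules out diagonalizability of $Q_{X}$. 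The hardest step is the very construction of the involutions $f_{i}$: it requires a relative form of Freedman--Quinn realization (for instance via the work of Orson--Powell), and one must then lift the $(\Z/2)^{k}$-action on $X$ to an action compatible with $\fraks$, which is automatic in the spin case (simply-connected $X$ admits a unique spin structure restricting to $\frakt$) and is a mild, arrangeable cohomological condition on $c_{1}(\fraks)$ in the spin$^{c}$ case.
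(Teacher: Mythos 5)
Your overall strategy is the same as the paper's—exhibit a topological family over a finite base to which \cref{theo: main theo} or \cref{theo: main theo2} would apply if it were smooth, and derive a contradiction—but the construction of the family is where your proposal breaks down.

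You propose a Borel construction over $(\mathbb{RP}^{N})^{k}$ with commuting involutions $f_{1},\dots,f_{k}\in\Homeo(X,\del)$ produced by ``a relative form of the Freedman--Quinn realization theorem.'' There are two genuine gaps here. First, Quinn's realization theorem produces a homeomorphism representing a given isometry of the intersection form; it gives no control over the order of that homeomorphism, and there is no known (or claimed in the paper) result producing an \emph{involution} of $X$ fixing $\del X$ pointwise that acts as $-\id$ on $H^{+}(X)$. Producing commuting involutions realizing a $(\Z/2)^{k}$ subgroup of $\Aut(Q_{X})$ is an even stronger statement. Second, even for a single homeomorphism, the ``relative'' realization theorem you invoke (a homeomorphism fixing the boundary pointwise realizing any isometry of $Q_{X}$) is not something you can take off the shelf; the paper sidesteps it entirely. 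What the paper actually does (\cref{theo: main app}) is use Freedman's classification (\cref{theo: Freedman boundary}, \cref{theo: contractible bound}, and the equality $\ks(W)=\mu(Y)$) to identify $X$ up to homeomorphism with a connected sum $b^{+}(X)(S^{2}\times S^{2})\#(\cdots)\#W$, and then takes $f_{i}$ to be explicit \emph{diffeomorphisms} of the $i$-th $S^{2}\times S^{2}$ factor (componentwise complex conjugation modified to have a fixed disk), extended by the identity. These are not involutions; the base is the torus $T^{k}$, built as a multiple mapping torus, which only requires the $f_{i}$ to commute. The explicit connected-sum construction is also what makes \cref{lem: lift top spin} work: lifting the $f_{i}$ to $\Aut((X,\fraks),\del)$ requires knowing each $f_{i}$ is supported on a single smooth closed summand, so the commutativity of the lifts and the triviality over the boundary can be checked pointwise over $X_{0}\supset Y$. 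Your one-line assertion that the lift is ``automatic'' in the spin case and ``arrangeable'' in the spin$^{c}$ case is exactly what the paper's \cref{lem: lift top spin} proves and is not automatic without the explicit support control.

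A few smaller points: in case (3) the paper does not need Elkies' theorem; the role of $\mu(Y)=1$ is purely to compute the Kirby--Siebenmann invariant of the contractible filling $W$ via $\ks(W)=\mu(Y)$, which then forces $X$ to be homeomorphic to $(S^{2}\times S^{2})\#(-E_{8})\#W$ and the argument proceeds exactly as in case (1). Similarly, the characteristic vector $c_{1}(\fraks)$ in cases (1)--(3) is written down explicitly from the connected-sum decomposition; no lattice-theoretic existence argument is needed. Finally, the reduction from ``there exists a non-smoothable $\Homeo(X,\del)$-bundle over $T^{k}$'' to ``$\Diff(X,\del)\inc\Homeo(X,\del)$ is not a weak equivalence'' is via obstruction theory exactly as you say—that part is fine.
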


As a classical fact in dimension $3$, it is known that the groups of diffeomorphisms and homeomorphisms have no homotopical difference.
This combined with \cref{cor: rel Diff Homeo} implies a similar result also for $\Diff(X)$ and $\Homeo(X)$:

\begin{theo}
\label{cor: Diff Homeo absolute}
Let  $X$ and $Y$ be as in \cref{cor: rel Diff Homeo}.
Then the inclusion map
\[
\Diff(X) \inc \Homeo(X)
\]
is not a weak homotopy equivalence.
\end{theo}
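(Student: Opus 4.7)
The plan is to derive this by contradiction from \cref{cor: rel Diff Homeo} by comparing restriction-to-boundary fibrations for $\Diff$ and $\Homeo$, together with the fact that smoothing theory is homotopically trivial in dimension $3$.

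First I would invoke the classical result that, for any closed oriented smooth $3$-manifold $Y$, the inclusion $\Diff(Y)\inc\Homeo(Y)$ is a weak homotopy equivalence. For the integral homology $3$-spheres considered here this is obtained by combining Hatcher's Smale Conjecture for $S^{3}$, the Hatcher--Ivanov smoothing theory for Haken $3$-manifolds, and the work of Gabai and others in conjunction with the Geometrization Theorem.

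Next I would set up the comparison diagram
$$
\begin{CD}
\Diff(X,\del) @>>> \Diff(X) @>{r_\Diff}>> \Diff(Y) \\
@VVV @VVV @VVV \\
\Homeo(X,\del) @>>> \Homeo(X) @>{r_\Homeo}>> \Homeo(Y),
\end{CD}
$$
where $r_\Diff$, $r_\Homeo$ are restrictions to $\del X=Y$. By Palais' restriction-is-a-fibration theorem (smooth case) and its Edwards--Kirby analogue (topological case), each row is a Serre fibration onto its image with fiber $\Diff(X,\del)$, respectively $\Homeo(X,\del)$, over $\id_Y$.

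Arguing by contradiction, suppose $\Diff(X)\inc\Homeo(X)$ is a weak homotopy equivalence. Then the middle and right verticals above are both weak equivalences, and the remaining step is to show the images $\im(r_\Diff)\subset\Diff(Y)$ and $\im(r_\Homeo)\subset\Homeo(Y)$ correspond under the resulting bijection $\pi_0\Diff(Y)\cong\pi_0\Homeo(Y)$. One inclusion is immediate, since a smooth extension is automatically topological. Conversely, given $\phi\in\im(r_\Homeo)$ with extension $\Phi\in\Homeo(X)$, the bijection $\pi_0\Diff(X)\cong\pi_0\Homeo(X)$ produces $\Phi'\in\Diff(X)$ topologically isotopic to $\Phi$, so $\Phi'|_Y\in\im(r_\Diff)$ lies in the same $\Homeo(Y)$-component as $\phi$, hence in the $\Diff(Y)$-component corresponding to $\phi$. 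Since each image is a union of components, the restriction $\im(r_\Diff)\inc\im(r_\Homeo)$ is a weak equivalence. Applying the five-lemma to the long exact sequences of homotopy groups of the two fibrations then yields that
$$
\Diff(X,\del)\inc\Homeo(X,\del)
$$
is a weak homotopy equivalence, contradicting \cref{cor: rel Diff Homeo}.

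The main subtlety is the component-level bookkeeping: the hypothesis $\pi_0\Diff(X)\cong\pi_0\Homeo(X)$ is needed to align $\im(r_\Diff)$ with $\im(r_\Homeo)$, without which the five-lemma comparison would only work for $\pi_i$ with $i\geq 1$ and fail to produce the needed $\pi_0$-bijection for the boundary-fixing groups.
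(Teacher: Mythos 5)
Your proof is correct and takes essentially the same route as the paper's: both hinge on the three-dimensional result that $\Diff(Y)\inc\Homeo(Y)$ is a weak equivalence, the restriction fibration sequences $\Diff(X,\del)\to\Diff(X)\to\Diff(Y)$ and $\Homeo(X,\del)\to\Homeo(X)\to\Homeo(Y)$ (which the paper attributes to Pardon, with image a union of components), and the five lemma. The paper leaves the component-level matching of images implicit while you spell it out, and your citations for the dimension-$3$ input are heavier than needed — the paper just uses Cerf together with Hatcher's Smale conjecture, without Geometrization.
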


In \cref{cor: rel Diff Homeo,cor: Diff Homeo absolute},
not just about weak homotopy equivalence, we may actually estimate the range of the degrees of homotopy groups where the difference happens for the first time:
it is approximately up to $b^{+}(X)$.
See \cref{cor: rel Diff Homeo ap,cor: Diff Homeo absolute ap} for the precise statements.

\begin{rem}
\label{rem: comparison assumption}
If $X$ is spin, the assumption (4)  in \cref{cor: rel Diff Homeo} is satisfied if we have that
\[
-\sigma(X)/8 > \delta(Y).
\]
This is deduced from a result by Stoffregen:
he showed in \cite[Theorem~1.2]{Sto172} that
\[
\alpha(Y,\frakt) \geq \delta(Y,\frakt) \geq \gamma(Y,\frakt)
\]
for a rational homology $3$-sphere $Y$ with a spin structure $\frakt$.

It is also worth noting that we have inequalities
\[
\alpha(Y,\frakt) \geq \beta(Y,\frakt) \geq \gamma(Y,\frakt),
\]
which follow from the definition of $\alpha, \beta, \gamma$.
\end{rem}

\begin{rem}
\label{rem: huge ex}
There are a huge (at least infinitely many) number of examples of $(X,Y)$ satisfying the assumption of \cref{cor: rel Diff Homeo}.
For example, it is quite easy to find examples satisfying (1) of \cref{cor: rel Diff Homeo}.
Other types of examples shall be given in \cref{subsec: Other applications and examples}.
The invariants $\alpha, \beta, \gamma, \delta$ are calculated by various authors, in particular for $\delta$ via an identification with the correction term in  Heegaard--Floer theory. 
See \cref{rem: comparison between Froyshov type invariants} for the details.
For $\alpha, \beta, \gamma$, see \cite[Subsection~3.8]{Ma16} and \cite{Sto172,Sto20}.
\end{rem}

For $X$ with small $b^{+}$, we can compare $\pi_{0}(\Diff(X,\del))$ with $\pi_{0}(\Homeo(X,\del))$ (and $\pi_{0}(\Diff(X))$ with $\pi_{0}(\Homeo(X))$ as well) a little more precisely:

\begin{theo}
\label{theo: not surj on alg aut}
Let $Y$ be an oriented integral homology $3$-sphere.
Let $X$ be a simply-connected, compact, oriented, smooth, and indefinite $4$-manifold with boundary $Y$.
Suppose that $\sigma(X) \leq 0$.
Suppose that $X$ and $Y$ satisfy at least one of the following conditions:
\begin{enumerate}
\item $b^{+}(X)=1$, $\sigma(X)<-8$ and $\delta(Y) \leq 0$.
\item $b^{+}(X)=1$, $\delta(Y) < 0$, and in addition $\sigma(X)<0$ if $X$ is non-spin.
\item $b^{+}(X)=1$, $\sigma(X) = -8$, $\delta(Y) = 0$ and $\mu(Y)=1$.
\item $b^{+}(X)=1$, $X$ is spin and $-\sigma(X)/8 > \gamma(Y)$.
\item $b^{+}(X)=2$, $X$ is spin and $-\sigma(X)/8 > \beta(Y)$.
\item $b^{+}(X)=3$, $X$ is spin and $-\sigma(X)/8 > \alpha(Y)$.
\end{enumerate}
Then the natural map
\begin{align}
\label{eq: pizero del}
\pi_{0}(\Diff(X,\del)) \to \pi_{0}(\Homeo(X,\del))
\end{align}
induced from the inclusion is not a surjection.

Moreover, the map
\begin{align}
\label{eq: pi zero closed}
\pi_{0}(\Diff(X)) \to \pi_{0}(\Homeo(X))
\end{align}
is also not a surjection.
Namely, there exists a homeomorphism of $X$ which is not
topologically isotopic to any self-diffeomorphism of $X$.
\end{theo}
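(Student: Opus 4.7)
The plan is to take mapping tori of specific self-homeomorphisms of $X$ fixing $\del X$ and apply \cref{theo: main theo,theo: main theo2} with $B = S^{1}$, translating the resulting obstruction directly into non-surjectivity on $\pi_{0}$: a $\Homeo(X,\del)$-bundle over $S^{1}$ is the mapping torus of some $f \in \Homeo(X,\del)$, its isomorphism class as a $\Homeo(X,\del)$-bundle depends only on $[f] \in \pi_{0}(\Homeo(X,\del))$, and $[f]$ lies in the image of $\pi_{0}(\Diff(X,\del))$ if and only if the mapping torus is smoothable as a fiber bundle over $S^{1}$.

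First I would construct $f \in \Homeo(X,\del)$ such that $f^{*} \colon H^{2}(X;\R) \to H^{2}(X;\R)$ reverses orientation on a maximal positive-definite subspace $H^{+}$. By Boyer's extension of the Freedman--Quinn realization theorem to simply-connected topological $4$-manifolds with integral homology sphere boundary, every isometry of $(H^{2}(X;\Z), Q_{X})$ is realized by some element of $\Homeo(X,\del)$, so it suffices to exhibit such an isometry. For cases (1)--(4), with $b^{+}(X) = 1$, and case (6), with $b^{+}(X) = 3$, the involution $-\id$ reverses orientation on the odd-dimensional $H^{+}$. For case (5), with $X$ spin and $b^{+}(X) = 2$, the even indefinite form $Q_{X}$ contains a hyperbolic summand $H$, and I would take $-\id$ on this summand and $\id$ on its orthogonal complement; this acts with determinant $-1$ on the $2$-dimensional $H^{+}$.

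The mapping torus $E_{f} \to S^{1}$ then carries a real vector bundle $H^{+}(E_{f}) \to S^{1}$ of rank $b^{+}(X)$ with monodromy $f^{*}|_{H^{+}}$, so $w_{1}(H^{+}(E_{f})) \neq 0$ by construction. Since $H^{k}(S^{1};\F_{2}) = 0$ for $k \geq 2$, this is equivalent to $w_{b^{+}(X)}(H^{+}(E_{f})) \neq 0$ when $b^{+}(X) = 1$, to $w_{b^{+}(X)-1}(H^{+}(E_{f})) \neq 0$ when $b^{+}(X) = 2$, and to $w_{b^{+}(X)-2}(H^{+}(E_{f})) \neq 0$ when $b^{+}(X) = 3$, matching the hypotheses of \cref{theo: main theo} and the three bullets of \cref{theo: main theo2}. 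If $[f]$ were in the image of $\pi_{0}(\Diff(X,\del))$, realized say by $g \in \Diff(X,\del)$ topologically isotopic to $f$, then $E_{g} \cong E_{f}$ as $\Homeo(X,\del)$-bundles would be smoothable with the same $H^{+}$ since $g^{*} = f^{*}$ on cohomology. Applying the relevant inequality of \cref{theo: main theo,theo: main theo2} to this smooth family, with the unique spin structure of $X$ in cases (4)--(6) or with an $f$-invariant spin$^{c}$ structure in cases (1)--(3), would give $(c_{1}(\fraks)^{2} - \sigma(X))/8 \leq \delta(Y,\frakt)$ in (1)--(3) and $-\sigma(X)/8 \leq \gamma(Y,\frakt),\ \beta(Y,\frakt),\ \alpha(Y,\frakt)$ in (4), (5), (6) respectively, in direct contradiction with the numerical hypothesis of each case.

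For the absolute statement, suppose $f \in \Homeo(X,\del)$ is isotopic in $\Homeo(X)$ to some $g \in \Diff(X)$. The induced isotopy in $\Homeo(Y)$ between $\id_{Y}$ and $g|_{Y}$, combined with the fact that $\Diff(Y) \inc \Homeo(Y)$ is a weak homotopy equivalence in dimension three (Cerf--Hatcher), allows $g|_{Y}$ to be smoothly isotoped to $\id_{Y}$; extending this isotopy over a collar of $\del X$, I modify $g$ to a diffeomorphism $g' \in \Diff(X,\del)$ with $(g')^{*} = g^{*} = f^{*}$ on $H^{2}$, reducing to the previous contradiction. The main obstacle is the construction of an $f$-invariant spin$^{c}$ structure in the non-spin cases (1)--(3): for $f^{*} = -\id$ on $H^{2}$, only $c_{1}(\fraks) = 0$ is invariant, which forces $X$ to be spin. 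I expect this to be circumvented, as in the proof of \cref{cor: rel Diff Homeo}, by a refined family Bauer--Furuta setup that allows $f$ to exchange conjugate spin$^{c}$ structures $\fraks$ and $\bar\fraks$, so that the mapping torus carries an appropriate twisted family structure and the constraint still applies.
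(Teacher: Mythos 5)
Your overall strategy—build mapping tori over $S^1$ of suitable $f\in\Homeo(X,\del)$, obstruct smoothability with \cref{theo: main theo,theo: main theo2}, and translate this directly to non-surjectivity on $\pi_0$—is the same as the paper's, and your treatment of the absolute version via the Cerf--Hatcher fact in dimension three is essentially the argument the paper packages with the four lemma. However, your construction of $f$ is different from the paper's in a way that creates a genuine gap you have acknowledged but not closed. In the non-spin cases (1)--(3) (and the paper does reduce (1)--(3) to the non-spin case, deferring the spin subcase to (4)), you want $f$ to act as $-\id$ on $H^2(X;\Z)$ and then apply \cref{theo: main theo} with a spin$^c$ structure $\fraks$ satisfying $(c_1(\fraks)^2-\sigma(X))/8$ equal to $1$ or $0$. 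But $-\id$ preserves no spin$^c$ structure unless $c_1(\fraks)=0$, which forces $X$ to be spin; there simply is no $f$-invariant $\fraks$ with the required square when $X$ is non-spin. Your proposed patch—a ``refined families Bauer--Furuta setup that allows $f$ to exchange conjugate spin$^c$ structures''—is not developed here, is not present anywhere in the paper, and would require a substantial additional construction beyond the machinery of \cref{subsection: families relative Bauer--Furuta invariant}, which genuinely needs a reduction of the structure group to $\Aut((X,\fraks),\del)$, i.e.\ an $f$-preserved $\fraks$.

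The paper resolves this by building $f$ differently rather than by extending the machinery. Its proof of \cref{theo: main app} first uses Freedman and Boyer's classification only to obtain a \emph{homeomorphism} from $X$ to an explicit connected sum involving $b^+(X)(S^2\times S^2)$, copies of $-\CP^2$ and $-\CP^2_{\mathrm{fake}}$, $-E_8$, and a contractible piece $W$; then it takes $f$ supported in the $S^2\times S^2$ summands (a diffeomorphism $f_0$ of $S^2\times S^2$ reversing $H^+(S^2\times S^2)$ with a fixed disk, extended by the identity). Because this $f$ is the identity on the $-\CP^2$ summands where the characteristic class $c_1(\fraks)$ is concentrated, $\fraks$ is automatically $f$-invariant and $f|_{\del X}=\id$, and \cref{lem: lift top spin} produces the required fiberwise topological spin$^c$ (or spin) structure on the mapping torus with trivialized boundary. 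This side-steps both the spin$^c$-invariance problem and the need for a rel-$\del$ isometry realization theorem (the ``Boyer extension of Freedman--Quinn realization'' you invoke is not something the paper uses; the paper only needs Boyer's classification). If you want to salvage your route, the honest fix is to switch to the paper's explicit construction in cases (1)--(3); for the spin cases (4)--(6) your $-\id$/hyperbolic-reflection idea could in principle be made to work since the spin structure is unique, though you would still need to justify the rel-$\del$ realization step and the $\Z/2$ lift of $f$ to $\mathrm{Spin}Top(4)$ trivial on the boundary, which the paper gets for free from its hands-on construction.
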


\Cref{theo: not surj on alg aut} shall be proven in \cref{subsec: Comparison between Diff and Homeo}.
Concrete examples of $X$ satisfying the assumption of \cref{theo: not surj on alg aut} shall be given in \cref{ex: first first ex,ex: Ex of non surj1.5,ex: Ex of non surj2 rev,ex: Ex of non surj2,ex: Ex of non surj3}, where we shall use all invariants $\alpha, \beta,\gamma,\delta$.

\begin{rem}
As an obvious consequence of \cref{theo: not surj on alg aut}, in the setting of the \lcnamecref{theo: not surj on alg aut}, the natural map
\[
\pi_{0}(\Diff^{+}(X)) \to \Aut(H^{2}(X;\Z))
\]
is also not a surjection.
Here $\Aut(H^{2}(X;\Z))$ denotes the automorphism group of the intersection form.
\end{rem}

It is worth noting that, for a closed smooth $4$-manifold $X$, the map \eqref{eq: pi zero closed}
is often a surjection and there are only few examples of $X$ for which \eqref{eq: pi zero closed} are known to be not surjections.
See \cref{rem: non surj closed} for the detail.

Lastly, we mention that there are interesting recent work on relative diffeomorphisms in dimension $4$ based on techniques which are different from gauge theory.
See, for example, \cite{Wa18,BG19,Wa20}.

We finish off this introduction with an outline of the contents of this paper.
In \cref{section: Preliminaries} we summarize what we need regarding Manolescu's Seiberg--Witten Floer stable homotopy type.
In particular, in \cref{subsection: The Froyshov invariant and the Manolescu invariants} we recall some basics of the Fr{\o}yshov-type invariants $\alpha, \beta,\gamma,\delta$, and in \cref{subsection: families relative Bauer--Furuta invariant} we describe the families relative Bauer--Furuta invariant, from which we extract constraints on smooth families of $4$-manifolds with boundary, \cref{theo: main theo,theo: main theo2}.
In \cref{section: Proof of the main theorem} we prove \cref{theo: main theo,theo: main theo2}, which are the main theorems of this paper.
In \cref{section: Applications} we consider applications of \cref{theo: main theo,theo: main theo2} mainly to the existence of non-smoothable families of 4-manifolds with boundary, stated as \cref{theo: main app}, and give consequences of \cref{theo: main app} about comparisons between various diffeomorphism groups and homeomorphism groups of 4-manifolds with boundary in \cref{subsec: Comparison between Diff and Homeo}.
A number of examples of such comparison results are given in \cref{subsec: Other applications and examples}, where all of invariants $\alpha, \beta,\gamma,\delta$ are effectively used.

\begin{acknowledgement}
First the authors would like to express their gratitude to Tadayuki Watanabe for inspiring them to consider the group of relative diffeomorphisms by sharing a draft of his paper~\cite{Wa20} with them. 
The authors also wish to thank David Baraglia for giving helpful comments on previous versions of this paper.
The authors would also like to express their appreciation to Ciprian Manolescu and Nobuo Iida for answering them questions about Fr{\o}yshov-type invariants and a gauge fixing condition respectively.
The first author was partially supported by JSPS KAKENHI Grant Numbers 17H06461 and 19K23412.
The second author was supported by JSPS KAKENHI Grant Number 20K22319 and RIKEN iTHEMS Program.
\end{acknowledgement}

\section{Preliminaries} 
\label{section: Preliminaries}
In this \lcnamecref{section: Preliminaries}, we collect necessary ingredients to prove \cref{theo: main theo,theo: main theo2}.
After recalling the definition of Manolescu's Seiberg--Witten Floer stable homotopy type~\cite{Ma03} in \cref{subsection: Seiberg--Witten Floer stable homotopy type},
we recall some basics of the Fr{\o}yshov-type invariants $\alpha, \beta,\gamma,\delta$ in \cref{subsection: The Froyshov invariant and the Manolescu invariants}.
In \cref{subsection: families relative Bauer--Furuta invariant} we describe the families relative Bauer--Furuta invariant for a family of $4$-manifolds with boundary, defined if we suppose that the fiberwise restriction of the family to the boundaries is a trivial family of $3$-manifolds.
This is a main ingredient in the proof of \cref{theo: main theo,theo: main theo2}.

\subsection{Seiberg--Witten Floer stable homotopy type} 
\label{subsection: Seiberg--Witten Floer stable homotopy type}

In this subsection we review Manolescu's Seiberg--Witten Floer stable homotopy type, mainly to fix some notation.
The main references are Manolescu~\cite{Ma03} and Khandhawit~\cite{Kha15}. 

Let $(Y, \frakt)$ be an oriented spin$^c$ rational homology $3$-sphere with a Riemann metric $g_Y$. 
Let $S$ be the spinor bundle of $\frakt$.  Fix a flat spin$^{c}$ reference connection $a_0$ on $(Y, \frakt)$.
For an integer $k>2$, we define a configuration space by
\[
\mathcal{C}_k(Y,\frakt):=L^2_{k-\frac{1}{2}}  (i \Lambda^1_Y) \oplus  L^2_{k-\frac{1}{2}}(S).
\]
The {\it Chern--Simons--Dirac functional} $CSD : \mathcal{C}_k(Y,\frakt) \to \R$ is defined by
\[
CSD (a,\phi) :=  \frac{1}{2}   \left(-\int_Y a\wedge da + \int_Y <\phi , \D_{a_0+a} \phi >\t{dvol}_Y  \right), 
\]
where $\D_{a_0+a}$ is the $\text{spin}^c$ Dirac operator for the connection $a_0+a$.
This functional is invariant under the action of the gauge group, where the gauge group $\G_k(Y)$ and a subgroup $\G^{0}_k(Y)$ of $\G_k(Y)$ are defined by
\[
\G_k(Y) := L^2_{k+\frac{1}{2}} (Y, S^1)
\]
and
\[
\G^{0}_k(Y) := \Set{ g \in \G_k(Y)  | f : Y \to \R,\ g= e^{if},\ \int_{Y} f \text{vol}_Y =0 }.
\]
The action $\G_k(Y)$ on $\mathcal{C}_k(Y,\frakt)$ is given by the pull-back of connections and the complex multiplication on spinors.
A global slice of the action of $\G^{0}_{k}(Y) $ on $\mathcal{C}_k(Y,\frakt)$ is given by 
 \[
  \operatorname{Coul}_k(Y,\mathfrak{t})= (\Ker d^*:  L^2_{k-\frac{1}{2}}(\Lambda_Y^1) \to L^2_{k-\frac{3}{2}}(\Lambda_Y^0))  \oplus  L^2_{k-\frac{1}{2}}  ( S ). 
  \]
Let $\pr_{\Ker d^{*}} : L^2_{k-\frac{1}{2}}(\Lambda_Y^1) \to \Ker d^{\ast}$ be the linear projection with kernel the tangents to the $\G^{0}_{k}(Y)$-orbit.
The $S^1$-equivariant formal gradient flow
\[
\mathfrak{v} :  \operatorname{Coul}_k(Y,\mathfrak{t})  \to  \operatorname{Coul}_{k-1}(Y,\mathfrak{t})
\]
 of $CSD$ with respect to a certain metric on $\operatorname{Coul}_k(Y,\mathfrak{t})$ defined in \cite[Section~3]{Ma03} can be written as the sum of a linear part
 \[
 l= (*d,  \D_{a_0})
 \]
and the quadratic term
     \[
   c(b, \psi) = (\pr_{\Ker d^*}  \rho^{-1} ((\psi \psi^*)_0) , \rho (b) \psi- \xi (\psi) \psi), 
   \]
   where $\xi (\psi ) \in i \Om^0(Y)$ is determined by the conditions 
 \[
 d \xi (\psi) = ( 1- \pr_{\Ker d^*}  ) \circ \rho^{-1} ((\psi \psi^*)_0) \text{\quad and }  \ \int_Y \xi (\psi) \operatorname{dvol}  =0. 
 \]

 Henceforth we just say that $\frakt$ is spin if $\frakt$ comes from a spin structure.
Although $\frak{v}$ is an $S^1$-invariant vector field in general, if $\frakt$ is spin, we have a larger symmetry of the group $\Pin(2) $, which is defined by 
  \[
  \Pin(2) := S^1 \cup j S^1 \subset  Sp(1).
  \]
When $\frakt$ is spin, the spinor bundle has a structure of $Sp(1)$-bundle.
The group $\Pin(2)$ acts on the spinor bundle $S$ as the restriction of the natural $Sp(1)$-action on $S$, and $\Pin(2)$ acts on $\Om^1_Y$ via the non-trivial homomorphism $\Pin(2)\to O(1)=\{\pm1\}$. By such actions, $\Pin(2)$ acts on $\operatorname{Coul}_k(Y,\fraks)$.
It turns out that the vector field $\frk{v}$ is $\Pin(2)$-equivariant. 
Let $\tilR$ denote the real $1$-dimensional representation of $\Pin(2)$ via the map $\Pin(2)\to O(1)$, and $\quat$ denote the space of quaternions, on which $\Pin(2)$ acts as the restriction of the natural action of $Sp(1)$.

For $\lambda < 0 < \mu$, we define $V_\lambda^\mu(Y)$ as the direct sum of the eigenspaces whose eigenvalues belong to $(\lambda,\mu]$. Here we think of $V_\lambda^\mu(Y)$ as a subspace of $\operatorname{Coul}_k(Y,\frakt)$. We denote by
 \begin{align}
 p_\lambda^\mu: \operatorname{Coul}_k(Y,\frakt) \to V_\lambda^\mu(Y)
 \end{align}
 the $L^2$-projection of $\operatorname{Coul}_k(Y,\frakt)$ onto $V_\lambda^\mu(Y)$.
 Henceforth we often abbreviate $V_\lambda^\mu(Y)$ as $V_\lambda^\mu$.
 Since $l$ is the sum of a real operator and a complex operator, we have the direct sum decomposition 
 \[
 V_\lambda^\mu = V_\lambda^\mu(\R) \oplus V_\lambda^\mu(\mathbb{C})
 \]
 of a real vector space and a complex vector space. 
 Denote by $B(R; V_\lambda^\mu)$ the closed ball in $V_\lambda^\mu$ of radius $R$ centered at the origin.
 Manolescu proved the following compactness property for the dynamical system induced by a vector field $(V_\lambda^\mu, l + p_\lambda^\mu c)$: 
 
\begin{theo}[{\cite[Proposition~3]{Ma03}}]
 \label{isolating}
 There exist sufficiently large $R>0$ and $-\lambda, \mu>0$ such that all trajectories $x : \R \to V_\lambda^\mu$ of the flow equation 
 \[
 \frac{\partial}{\partial t} x(t) = - (l + p_\lambda^\mu c) (x(t)) 
 \]
 which lie in $B(2R; V_\lambda^\mu)$ actually lie in $B(R; V_\lambda^\mu) $.
\end{theo}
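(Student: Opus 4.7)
The plan is to argue by contradiction, reducing the isolation property for the finite-dimensional approximated flow $l + p_\lambda^\mu c$ to a uniform \emph{a priori} bound for trajectories of the full Chern--Simons--Dirac gradient flow $l+c$ on $\operatorname{Coul}_k(Y,\frakt)$.

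First I would establish the infinite-dimensional bound. A trajectory $x:\R \to \operatorname{Coul}_k(Y,\frakt)$ of $\partial_t x = -(l+c)(x)$ corresponds, after reinstating a temporal gauge, to a finite-energy Seiberg--Witten monopole on the cylinder $\R \times Y$. The Weitzenb\"ock--Lichnerowicz formula for the cylindrical spin$^c$ Dirac operator, together with the maximum principle applied to $|\Phi|^2$, yields a uniform pointwise bound on the spinor depending only on $\sup|s_{g_Y}|$; an elliptic bootstrap in the Coulomb slice then controls the connection component in $L^2_{k-1/2}$. Hence there is a constant $R_0=R_0(Y,g_Y,k)$ such that every trajectory of the full flow that remains bounded satisfies $\sup_{t\in\R}\|x(t)\| \le R_0$.

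Now fix $R>R_0$ and suppose the conclusion fails. Then one can find sequences $\lambda_n\to-\infty$, $\mu_n\to+\infty$ and trajectories $x_n:\R\to V_{\lambda_n}^{\mu_n}$ of $\partial_t x_n = -(l+p_{\lambda_n}^{\mu_n}c)(x_n)$ lying in $B(2R; V_{\lambda_n}^{\mu_n})$ but with $\|x_n(0)\|>R$ after a time translation. The nonlinearity $c$ is quadratic, and both of its components involve the smoothing factors $\pr_{\Ker d^*}$ and $\rho^{-1}$, so $c$ is compact as a map $L^2_{k-1/2}\to L^2_{k-1/2}$; the spectral projections $p_{\lambda_n}^{\mu_n}$ converge strongly to the identity on $L^2_{k-1/2}$. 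Standard parabolic regularity along the flow, applied on each compact time interval, yields $n$-independent higher-regularity estimates. A diagonal Arzel\`a--Ascoli argument then extracts a subsequence $x_n\to x_\infty$ in $C^0_{\mathrm{loc}}(\R,\operatorname{Coul}_{k-\varepsilon})$, with $x_\infty$ a trajectory of the unapproximated flow $l+c$ satisfying $\|x_\infty(t)\|\le 2R$ for all $t$ and $\|x_\infty(0)\|\ge R>R_0$, contradicting the bound from the previous step.

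The main obstacle will be making the limiting step rigorous: one must ensure that the finite-dimensional approximations enjoy regularity bounds independent of $\lambda_n,\mu_n$ on every compact time interval, and that the limit really solves the full flow on all of $\R$ despite the loss of a derivative in $c$. Once these technicalities are in place, combining them with the cylindrical Seiberg--Witten $L^\infty$ bound provides the desired isolating neighborhood $B(R;V_\lambda^\mu)$ inside $B(2R;V_\lambda^\mu)$ for all sufficiently negative $\lambda$ and sufficiently large $\mu$.
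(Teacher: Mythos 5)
Your proposal reproduces the strategy of Manolescu's original proof of \cite[Proposition~3]{Ma03}, which the present paper cites without reproving: first obtain a universal \emph{a priori} bound $R_0$ for bounded trajectories of the unapproximated flow $l+c$ by reinterpreting them as Seiberg--Witten monopoles on the cylinder $\R\times Y$ and applying the Weitzenb\"ock identity, the maximum principle, and elliptic bootstrap; then argue by contradiction, extracting from a hypothetical sequence of offending trajectories of the truncated flows $l+p_{\lambda_n}^{\mu_n}c$ a convergent subsequence whose limit is a full-flow trajectory violating the bound. This is the right approach and matches the cited reference.

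Two of your supporting claims, however, are not the correct reasons for the phenomena you invoke, and you should repair them before calling the argument complete. First, $\pr_{\Ker d^*}$ and $\rho^{-1}$ are \emph{not} smoothing operators: the former is an orthogonal projection and the latter an algebraic bundle isomorphism, so neither gains derivatives, and in any case a map $L^2_{k-1/2}\to L^2_{k-1/2}$ that is both continuous and compact on a Hilbert space is rather restrictive. The actual mechanism is that $c$ is a purely algebraic quadratic expression, so Sobolev multiplication (valid since $k>2$ in dimension three) shows $c:\operatorname{Coul}_k(Y,\frakt)\to\operatorname{Coul}_k(Y,\frakt)$ is continuous, and the Rellich compact embedding $\operatorname{Coul}_k\hookrightarrow\operatorname{Coul}_{k-1}$ is what makes $c:\operatorname{Coul}_k\to\operatorname{Coul}_{k-1}$ compact. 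Second, ``parabolic regularity'' is not what closes the Arzel\`a--Ascoli step: the linear part $l$ is first order, not a Laplacian, so the flow is a pseudo-gradient flow rather than a heat equation, and in particular the naive $t$-derivative estimate $\|\partial_t x_n\|_{L^2_{k-1/2}}\le\|l x_n\|+\|p_{\lambda_n}^{\mu_n}c(x_n)\|$ blows up as $|\lambda_n|,\mu_n\to\infty$. The $n$-independent interior bounds needed on compact time intervals, and the strong (not merely weak) convergence at $t=0$ required to preserve the inequality $\|x_\infty(0)\|\ge R$, come from the ellipticity of the four-dimensional Seiberg--Witten system on $\R\times Y$. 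Once you substitute these correct justifications, the proof goes through as in Manolescu.
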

 
 By the use of \cref{isolating}, one can see that $B(2R; V_\lambda^\mu)$ is an isolating neighborhood of 
 \[
 \operatorname{Inv} B(2R; V_\lambda^\mu) := \Set{ x \in B(2R) | t \cdot x \in B(2R), ~  \forall t \in \R } 
 \]
  with respect to the flow on $V_\lambda^\mu$ generated by $\rho (l + p_\lambda^\mu c)$, where $\rho$ is an $S^1$-invariant bump function such that $\rho|_{V_\lambda^\mu \setminus B(3R; V_\lambda^\mu)}=0$ and $\rho|_{B(2R; V_\lambda^\mu)}=1$.
Here $t\cdot$ denotes the action of $t$ via this flow on $V_{\lambda}^{\mu}$.
 We denote by 
 \begin{align}\label{Conley index} 
 I_\lambda^\mu := N_\lambda^\mu /  L_\lambda^\mu
 \end{align}
  the $S^1$-equivariant index pair $(N_\lambda^\mu ,  L_\lambda^\mu)$ of  $\operatorname{Inv} B(2R; V_\lambda^\mu)$ for the flow.
 Let $n(Y, \frakt, g_Y) \in \Q$ be the rational number defined as
 \[
 n(Y, \frakt, g_Y) = \ind_{\C}D_{X} - \frac{c_{1}(\fraks)^{2}-\sigma(X)}{8},
 \]
where $D_{X}$ is the Dirac operator on a compact oriented $4$-manifold $X$ bounded by $Y$ with respect to a spin$^{c}$ structure $\fraks$ on $X$ which is an extension of $\frakt$, and a spin$^{c}$ connection on $X$ which is an extension of the reference connection $a_{0}$ on $Y$.
It turns out that $n(Y, \frakt, g_Y)$ depends only on $(Y, \frakt, g_Y)$.
  The Seiberg--Witten Floer homotopy type of $(Y,\frakt)$ is defined to be
 \begin{align}
 SWF(Y, \frakt) := \Sigma^{-n(Y, \frakt, g_Y) \mathbb{C} -V^0_\lambda} I_\lambda^\mu,
 \end{align}
 which makes sense in a certain suspension category.
 For the definition of the formal desuspension, see \cite[Section~6]{Ma03}.
 (However, we shall use only $SWF(Y, \frakt)$ which is sufficiently suspended in that category, and  the formal desuspensions will not appear in our argument.)

When $\frakt$ is spin, we take $\rho$ above to be a $\Pin(2)$-invariant bump function, and consider $\Pin(2)$-equivariant Conley index instead.
We set 
\[
 SWF(Y, \frakt) :=  \Sigma^{-\frac{n(Y, \frakt, g)}{2} \mathbb{H} -V^0_\lambda}  I^\mu_\lambda, 
 \]
as a stable homotopy type of a pointed $\Pin(2)$-space.

\subsection{The Fr{\o}yshov invariant $\delta$ and the Manolescu invariants $\alpha, \beta, \gamma$}
\label{subsection: The Froyshov invariant and the Manolescu invariants}

In this \lcnamecref{subsection: The Froyshov invariant and the Manolescu invariants} we recall the definition of the Fr{\o}yshov invariant and the Manolescu invariants $\alpha, \beta, \gamma$.
The Fr{\o}yshov invariant was originally defined in term of the monopole Floer homology \cite{Fr96,Fr02}, but it can be interpreted also in terms of the Seiberg--Witten Floer homotopy type~\cite{Ma03,Ma16}.
In this paper we mainly follow Manolescu's description of the Fr{\o}yshov invariant given in \cite{Ma16}.
When one considers a spin structure on a given $3$-manifold, using $\Pin(2)$-symmetry of the Seiberg--Witten equations, analogous three invariants are defined, which are the Manolescu invariants $\alpha, \beta, \gamma$ introduced in \cite{Ma16}.
We also recall the definition of $\alpha, \beta, \gamma$ in this \lcnamecref{subsection: The Froyshov invariant and the Manolescu invariants}.
Henceforth, throughout this paper, all (co)homology will be taken with $\F=\Z/2$-coefficients.
We refer the readers also to Stoffregen's paper~\cite{Sto172} for this \lcnamecref{subsection: The Froyshov invariant and the Manolescu invariants}.

\begin{rem}
The original definition of the Fr{\o}yshov invariant uses (co)homology with $\Q$-coefficient, not $\F$-coefficient.
However, as noted in \cite[Remark~3.12]{Ma16}, there is no known example of $3$-manifolds for which the Fr{\o}yshov invariant with $\Q$-coefficient and that with $\F$-coefficient are different.
\end{rem}

Let $(Y,\frakt)$ be a spin$^{c}$ rational homology 3-sphere and fix a Riemannian metric $g$ on $Y$ and real numbers  $\lambda, \mu$ to define a finite-dimensional approximation.
One can easily check that 
\[
(I_{\lambda}^{\mu})^{S^{1}} \cong N^{S^{1}}/L^{S^{1}}
\]
and $(I_{\lambda}^{\mu})^{S^{1}}$ is homotopy equivalent to $V_{\lambda}^{0}(\R)^{+}$, 
Set 
\[
s := \dim V_{\lambda}^{0}(\R).
\]
Then we have
\[
\tilde{H}_{S^{1}}^{\ast + s}((I_{\lambda}^{\mu})^{S^{1}})
\cong \tilde{H}_{S^{1}}^{\ast + s}(V_{\lambda}^{0}(\R)^{+})
\cong \tilde{H}_{S^{1}}^{\ast}(S^{0})
\cong \F[U].
\]
The Fr{\o}yshov invariant $\delta(Y,\frakt)$ is defined as follows.
Denote by $i : (I_{\lambda}^{\mu})^{S^{1}} \inc I_{\lambda}^{\mu}$ the inclusion.
The quantity $d$ in \cite{Ma16} is defined as 
\begin{align}
\label{eq: d def}
d(Y, \lambda, \mu, g, \frakt)
= \min\Set{r \equiv s \mod 2 | \exists x \in \tilde{H}_{S^{1}}^{r}(I_{\lambda}^{\mu}),\ U^{l} \cdot x \neq 0 \ (\forall l \geq 0)},
\end{align}
where an equivariant localization theorem ensures that the set in the right-hand side is not empty.
This might look different from the quantity $d$ defined in \cite[Subsection~2.6]{Ma16} at first glance, but it can be seen that \eqref{eq: d def} is just the same with Manolescu's $d$ using an argument in the proof of \cite[Lemma~2.9]{Ma16}.
(See also \cite[Definition~3.6]{Sto172}.)
Then the Fr{\o}yshov invariant $\delta(Y,\frakt) \in \Q$ is defined by
\begin{align}
\label{def Fro h d}
\delta(Y, \frakt)
= (d(Y, \lambda, \mu, g, \frakt) - \dim_{\R} V_{\lambda}^{0})/2 -n(Y,\frakt,g).
\end{align}
It turns out that $\delta(Y, \frakt) \in \Q$ depends only on $(Y, \frakt)$.
(Note that $n(Y,\frakt,g)$ may not be an integer if $Y$ is not an integral homology sphere.
If $Y$ is an integral homology sphere, then $n(Y,\frakt,g) \in \Z$ and hence $\delta(Y, \frakt) \in \Z$.)

Here we note an elementary observation used in the proof of one of the main theorem, \cref{theo: main theo}.

\begin{lem}
\label{lem: non zero red}
If $x \in \tilde{H}_{S^{1}}^{r}(I_{\lambda}^{\mu})$ satisfies that $U^{l} \cdot x \neq 0$ for all $l \geq 0$, then we have $i^{\ast} x \neq 0$ in $\tilde{H}_{S^{1}}^{r}((I_{\lambda}^{\mu})^{S^{1}}).$
\end{lem}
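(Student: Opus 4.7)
The plan is to deduce this from the Smith--Floyd style localization theorem for $S^{1}$-equivariant cohomology. The point is that although $i^{\ast}$ has no reason to be injective on the nose, its kernel is always annihilated by a power of $U$, and the hypothesis on $x$ precisely says that $x$ is not $U$-torsion.

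More concretely, I would proceed as follows. First, regard the inclusion $i\colon (I_{\lambda}^{\mu})^{S^{1}}\hookrightarrow I_{\lambda}^{\mu}$ of pointed $S^{1}$-spaces as a cofibration, and form the cofiber $Q:=I_{\lambda}^{\mu}/(I_{\lambda}^{\mu})^{S^{1}}$. This produces a long exact sequence
\begin{equation*}
\cdots\longrightarrow\tilde{H}_{S^{1}}^{r}(Q)\longrightarrow\tilde{H}_{S^{1}}^{r}(I_{\lambda}^{\mu})\stackrel{i^{\ast}}{\longrightarrow}\tilde{H}_{S^{1}}^{r}((I_{\lambda}^{\mu})^{S^{1}})\longrightarrow\cdots,
\end{equation*}
so $\ker(i^{\ast})$ is contained in the image of $\tilde{H}_{S^{1}}^{r}(Q)$. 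The space $Q$ has no $S^{1}$-fixed points other than the basepoint, hence by the localization theorem $U^{-1}\tilde{H}_{S^{1}}^{\ast}(Q)=0$. Since $\tilde{H}_{S^{1}}^{\ast}(Q)$ is finitely generated in each degree (recall $I_{\lambda}^{\mu}$ is a finite $CW$-complex arising as a Conley index in a finite-dimensional approximation), this forces every element of $\tilde{H}_{S^{1}}^{r}(Q)$ to be $U$-torsion. Consequently $\ker(i^{\ast})$ is $U$-torsion as well.

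Now the hypothesis $U^{l}\cdot x\neq 0$ for all $l\geq 0$ says precisely that $x$ is not $U$-torsion in $\tilde{H}_{S^{1}}^{r}(I_{\lambda}^{\mu})$, so $x\notin\ker(i^{\ast})$, which gives $i^{\ast}x\neq 0$. I do not expect any serious obstacle here; the only mild care needed is to ensure the localization argument applies, which is why one wants to know $Q$ is built from $S^{1}$-cells with no trivial isotropy (true because its fixed set is just the basepoint) and that everything is finitely generated. The rest is formal.
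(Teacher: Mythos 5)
Your proof is correct, and it rests on the same underlying tool as the paper's proof — the $S^{1}$-equivariant localization theorem — but packages it in a genuinely different way. The paper invokes the consequence (via Stoffregen's Fact~2.5) that $i^{\ast}$ is an isomorphism in sufficiently high degrees, then multiplies $x$ by a high power of $U$ and pushes through $i^{\ast}$. You instead form the cofiber $Q$, observe $Q^{S^{1}} = \{\ast\}$, apply localization to $Q$ to see that every class in $\tilde{H}_{S^{1}}^{\ast}(Q)$ is $U$-torsion, and conclude from the cofiber long exact sequence that $\ker i^{\ast}$ is $U$-torsion. This route is arguably the cleaner of the two because it directly recasts the hypothesis ``$U^{l}x \neq 0$ for all $l$'' as ``$x$ is not $U$-torsion'' and matches it against the structural property of $\ker i^{\ast}$, rather than chasing degrees.

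One wording slip to fix: you write that $Q$ should be built from ``$S^{1}$-cells with no trivial isotropy,'' but what the argument actually requires is the opposite condition — that no cell away from the basepoint has isotropy the full group $S^{1}$ (equivalently, $Q^{S^{1}}=\{\ast\}$, which is what your parenthetical correctly identifies). In the Seiberg--Witten setting the $S^{1}$-action on the Conley index is free away from the reducible (fixed) locus, so away from the basepoint the cells of $Q$ in fact have \emph{trivial} isotropy; this guarantees the localization theorem with $\F = \Z/2$ coefficients applies cleanly, since one need not worry about finite nontrivial isotropy subgroups where $U$ could fail to be nilpotent.
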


\begin{proof}
As well as \cite[Fact~2.5]{Sto172}, an equivariant localization theorem implies that
\begin{align}
\label{eq: Sto iso}
i^{\ast} : \tilde{H}_{S^{1}}^{\ast}(I_{\lambda}^{\mu}) \to \tilde{H}_{S^{1}}^{\ast}((I_{\lambda}^{\mu})^{S^{1}})
\end{align}
is an isomorphism in cohomology in sufficiently high degrees.
The map \eqref{eq: Sto iso} is a $\tilde{H}_{S^{1}}^{\ast}(S^{0})=\F[U]$-module map.
Thus we have $i^{\ast}U^{l} \cdot x = U^{l} \cdot i^{\ast}x$ for all $l \geq 0$.
Therefore it suffices to show that there exists $l \geq 0$ such that $i^{\ast}U^{l} \cdot x \neq 0$ to prove the \lcnamecref{lem: non zero red}.
However, if we take $l$ sufficiently large, $i^{\ast} : \tilde{H}_{S^{1}}^{4l+r}(I_{\lambda}^{\mu}) \to \tilde{H}_{S^{1}}^{4l+r}((I_{\lambda}^{\mu})^{S^{1}})$ is an isomorphism, and we have that $U^{l} \cdot x \neq 0$.
Thus we obtain $i^{\ast}U^{l} \cdot x \neq 0$ for sufficiently large $l$.
\end{proof}

\begin{lem}
\label{lem: expression of omega}
Set $d=d(Y, \lambda, \mu, g, \frakt)$.
Then there exists a cohomology class 
\[
\omega \in \tilde{H}_{S^{1}}^{d}(I_{\lambda}^{\mu})
\]
such that
\begin{align}
\begin{split}
\label{eq: expression of iomega}
i^{\ast}\omega = [V^{0}_{\lambda}(\R)^{+}] \otimes U^{(d-s)/2}
\end{split}
\end{align}
in
\[
\tilde{H}^{\ast}(V^{0}_{\lambda}(\R)^{+}) \otimes \tilde{H}_{S^{1}}^{\ast}(S^{0})
\cong \tilde{H}_{S^{1}}^{\ast}(V^{0}_{\lambda}(\R)^{+}) 
\cong \tilde{H}_{S^{1}}^{\ast}((I_{\lambda}^{\mu})^{S^{1}}).
\]
(Recall that $d \equiv s \mod 2$, and hence $U^{(d-s)/2}$ makes sense.)
\end{lem}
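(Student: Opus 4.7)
The plan is to take $\omega$ to be a cohomology class produced directly by the definition of $d$, and then exploit the fact that, over $\F = \Z/2$, the relevant degree-$d$ piece of the fixed-point cohomology is one-dimensional, so that a nonzero element is automatically the indicated generator.

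First, by the defining formula \eqref{eq: d def} for $d = d(Y,\lambda,\mu,g,\frakt)$, I would choose $\omega \in \tilde{H}^d_{S^1}(I_\lambda^\mu)$ satisfying $U^l \cdot \omega \neq 0$ for all $l \geq 0$; such an $\omega$ exists by definition of $d$. Applying \cref{lem: non zero red} to this $\omega$ immediately yields that $i^*\omega$ is a \emph{nonzero} element of $\tilde{H}^d_{S^1}((I_\lambda^\mu)^{S^1})$.

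Next I would identify this target group explicitly. Because $(I_\lambda^\mu)^{S^1}$ is homotopy equivalent to $V^0_\lambda(\R)^+$ with trivial $S^1$-action (the basepoint at infinity is fixed), the equivariant cohomology factors as
\[
\tilde{H}^*_{S^1}((I_\lambda^\mu)^{S^1}) \cong \tilde{H}^*(V^0_\lambda(\R)^+) \otimes \tilde{H}^*_{S^1}(S^0),
\]
where the first factor is, by the Thom isomorphism, concentrated in degree $s$ and spanned there by the Thom class $[V^0_\lambda(\R)^+]$, and the second factor is $\F[U]$, concentrated in even non-negative degrees. Consequently, the degree-$d$ part of this tensor product is either zero or one-dimensional over $\F$; non-vanishing of $i^*\omega$ forces $d \geq s$ (consistent with $d \equiv s \mod 2$), and in that case the unique nonzero element is precisely $[V^0_\lambda(\R)^+] \otimes U^{(d-s)/2}$.

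Since over $\F = \Z/2$ any nonzero vector in a one-dimensional space equals the generator, I conclude $i^*\omega = [V^0_\lambda(\R)^+] \otimes U^{(d-s)/2}$, as required. I do not anticipate substantive obstacles: the only point needing care is the tensor-product decomposition of the equivariant cohomology of the fixed-point set, which is a standard consequence of triviality of the $S^1$-action (for instance via the collapsing Serre spectral sequence of $ES^1 \times_{S^1} (I_\lambda^\mu)^{S^1} \to BS^1$), combined with the fact that working mod $2$ removes any scalar ambiguity that would otherwise require an additional choice of normalization.
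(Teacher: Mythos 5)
Your proof is correct and matches the paper's argument: both pick $\omega$ from the defining minimum \eqref{eq: d def}, apply \cref{lem: non zero red} to get $i^{\ast}\omega \neq 0$, and then observe that $\tilde{H}_{S^{1}}^{d}((I_{\lambda}^{\mu})^{S^{1}}) \cong \F$ is one-dimensional over $\F=\Z/2$, forcing $i^{\ast}\omega$ to equal the stated generator. The paper simply leaves the last step to the reader (``Notice that this non-vanishing of $i^{\ast}\omega$ is equivalent to \eqref{eq: expression of iomega}''), while you spell it out via the Thom isomorphism and triviality of the $S^{1}$-action on $V_{\lambda}^{0}(\R)^{+}$.
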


\begin{proof}
By the definition of $d$ given in \eqref{eq: d def} and \cref{lem: non zero red}, there exists a cohomology class $\omega \in \tilde{H}_{S^{1}}^{d}(I_{\lambda}^{\mu})$
such that $i^{\ast}\omega \neq 0$.
Notice that this non-vanishing of $i^{\ast}\omega$ is equivalent to \eqref{eq: expression of iomega}.
\end{proof}

Next we recall the definition of $\alpha,\beta,\gamma$.
Suppose that $\frakt$ comes from a spin structure.
Then we have
\begin{align}
\label{eq: cohomology ring Pin2}
\tilde{H}_{\Pin(2)}^{\ast + s}((I_{\lambda}^{\mu})^{S^{1}})
\cong \tilde{H}_{\Pin(2)}^{\ast + s}(V_{\lambda}^{0}(\tilR)^{+})
\cong \tilde{H}_{\Pin(2)}^{\ast}(S^{0})
\cong \F[q,v]/(q^{3}),
\end{align}
with elements $q$ in degree 1 and $v$ in degree 4.
Let us define
\begin{align*}
a(Y, \lambda, \mu, g, \frakt)
&= \min\Set{r \equiv s \mod 4 | \exists x \in \tilde{H}_{\Pin(2)}^{r}(I_{\lambda}^{\mu}),\ v^{l} \cdot x \neq 0 \ (\forall l \geq 0)},\\
b(Y, \lambda, \mu, g, \frakt)
&= \min\Set{r \equiv s +1 \mod 4 | \exists x \in \tilde{H}_{\Pin(2)}^{r}(I_{\lambda}^{\mu}),\ v^{l} \cdot x \neq 0 \ (\forall l \geq 0)}-1,\\
c(Y, \lambda, \mu, g, \frakt)
&= \min\Set{r \equiv s +2 \mod 4 | \exists x \in \tilde{H}_{\Pin(2)}^{r}(I_{\lambda}^{\mu}),\ v^{l} \cdot x \neq 0 \ (\forall l \geq 0)}-2.
\end{align*}
The definition of the invariants $\alpha,\beta,\gamma$ valued in $\Q$ is
\begin{align}
\alpha(Y, \frakt)
&= (a(Y, \lambda, \mu, g, \frakt) - \dim_{\R} V_{\lambda}^{0})/2 -n(Y,\frakt,g), \label{eq: def al}\\
\beta(Y, \frakt)
&= (b(Y, \lambda, \mu, g, \frakt) - \dim_{\R} V_{\lambda}^{0})/2 -n(Y,\frakt,g),\label{eq: def beta}\\
\gamma(Y, \frakt)
&= (c(Y, \lambda, \mu, g, \frakt) - \dim_{\R} V_{\lambda}^{0})/2 -n(Y,\frakt,g).\label{eq: def gamma}
\end{align}

\begin{lem}
\label{lem: non zero red spin}
If $x \in \tilde{H}_{\Pin(2)}^{r}(I_{\lambda}^{\mu})$ satisfies that $v^{l} \cdot x \neq 0$ for all $l \geq 0$, then we have $i^{\ast} x \neq 0$ in $\tilde{H}_{\Pin(2)}^{r}((I_{\lambda}^{\mu})^{S^{1}}).$
\end{lem}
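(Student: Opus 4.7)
The plan is to imitate the proof of \cref{lem: non zero red}, replacing the $S^1$-equivariant set-up by its $\Pin(2)$-equivariant counterpart and the class $U$ by the class $v \in \tilde{H}_{\Pin(2)}^{*}(S^0) \cong \F[q,v]/(q^3)$ in degree $4$. The first step is to invoke a $\Pin(2)$-equivariant localization theorem (of the type used, e.g., in \cite[Fact~2.5]{Sto172} and in the $\Pin(2)$-equivariant set-up of \cite{Ma16}) which asserts that the restriction map
\[
i^{\ast} : \tilde{H}_{\Pin(2)}^{\ast}(I_{\lambda}^{\mu}) \to \tilde{H}_{\Pin(2)}^{\ast}((I_{\lambda}^{\mu})^{S^{1}})
\]
is an isomorphism in sufficiently high degrees. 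This is the correct statement since the Conley index $I_\lambda^\mu$ carries a $\Pin(2)$-action whose $S^1$-fixed point set is exactly $(I_\lambda^\mu)^{S^1}$ (the further $\Pin(2)/S^1$-action on the fixed set is harmless), and the element $v$ lies in the image of $H^{\ast}_{\Pin(2)}(\mathrm{pt})$ so that localizing away from the ideal $(v)$ kills the relative cohomology supported off $(I_\lambda^\mu)^{S^1}$.

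The second step is to note that $i^{\ast}$ is a module map over $\tilde{H}_{\Pin(2)}^{\ast}(S^{0}) = \F[q,v]/(q^3)$, so in particular $i^{\ast}(v^{l} \cdot x) = v^{l} \cdot i^{\ast}x$ for every $l \geq 0$. By hypothesis $v^{l} \cdot x \neq 0$ for all $l$, and since $v$ has degree $4$, for $l$ sufficiently large the class $v^{l} \cdot x$ lives in a degree where $i^{\ast}$ is an isomorphism, and is therefore mapped to a nonzero class. This forces $v^{l} \cdot i^{\ast}x \neq 0$, and a fortiori $i^{\ast}x \neq 0$, which is the desired conclusion.

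The only nontrivial point in the above argument is the localization statement in the first step; the rest is a formal consequence of module-map naturality, exactly as in the proof of \cref{lem: non zero red}. For the $\Pin(2)$-equivariant localization itself one may either quote it directly from Stoffregen's paper (where the analogous fact is used in the $\Pin(2)$-equivariant setting of the Manolescu invariants) or deduce it from the general fact that for a compact Lie group $G$ with a maximal torus (or maximal pro-$p$-toral subgroup) $T$, the restriction to the $T$-fixed points becomes an isomorphism after inverting a suitable Euler class in $H^{*}_{G}(\mathrm{pt})$, applied here to $T = S^{1} \subset \Pin(2)$ with Euler class $v$.
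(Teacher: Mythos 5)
Your proof is correct and takes essentially the same approach as the paper's: the paper's proof of \cref{lem: non zero red spin} simply says it is totally similar to \cref{lem: non zero red}, citing \cite[Fact~2.5]{Sto172} for the $\Pin(2)$-equivariant localization statement, followed by the module-map argument over $\tilde{H}_{\Pin(2)}^{\ast}(S^{0}) = \F[q,v]/(q^3)$ exactly as you do. Your added commentary on why the localization holds (inverting $v$ kills the cohomology supported off the $S^1$-fixed set) is a correct, if slightly more detailed, explanation of the fact the paper quotes as a black box.
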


\begin{proof}
The proof is totally similar to the proof of \cref{lem: non zero red}:
just use instead the fact, which is precisely \cite[Fact~2.5]{Sto172}, that
\begin{align*}
i^{\ast} : \tilde{H}_{\Pin(2)}^{\ast}(I_{\lambda}^{\mu}) \to \tilde{H}_{\Pin(2)}^{\ast}((I_{\lambda}^{\mu})^{S^{1}})
\end{align*}
is an isomorphism in sufficiently high degrees.
\end{proof}

\begin{lem}
\label{lem: expression of omega spin}
Set $a=a(Y, \lambda, \mu, g, \frakt), b=b(Y, \lambda, \mu, g, \frakt), c=c(Y, \lambda, \mu, g, \frakt)$.
Then there exist cohomology classes 
\begin{align*}
&\omega_{a} \in \tilde{H}_{S^{1}}^{a}(I_{\lambda}^{\mu}),\\
&\omega_{b} \in \tilde{H}_{S^{1}}^{b+1}(I_{\lambda}^{\mu}),\\
&\omega_{c} \in \tilde{H}_{S^{1}}^{c+2}(I_{\lambda}^{\mu})
\end{align*}
such that
\begin{align}
\begin{split}
\label{eq: expression of iomega spin}
&i^{\ast}\omega_{a} = \tau_{\Pin(2)}(V^{0}_{\lambda}(\tilR)^{+}) \cup v^{(a-s)/4},\\
&i^{\ast}\omega_{b} = \tau_{\Pin(2)}(V^{0}_{\lambda}(\tilR)^{+}) \cup qv^{(b-s)/4},\\
&i^{\ast}\omega_{c} = \tau_{\Pin(2)}(V^{0}_{\lambda}(\tilR)^{+}) \cup q^{2}v^{(c-s)/4}
\end{split}
\end{align}
in
\[
\tilde{H}_{S^{1}}^{\ast}(V^{0}_{\lambda}(\tilR)^{+}) 
\cong \tilde{H}_{S^{1}}^{\ast}((I_{\lambda}^{\mu})^{S^{1}}).
\]
Here $\tau_{\Pin(2)}(V^{0}_{\lambda}(\tilR)^{+}) \in \tilde{H}_{\Pin}^{*}(V^{0}_{\lambda}(\tilR)^{+})$ is the equivariant Thom class of the bundle $V^{0}_{\lambda}(\tilR) \to \pt$ over a point.
(Recall that $a,b,c$ are congruent to $s$ ${\rm mod}\ 4$,
and hence $v^{(a-s)/4},\ v^{(b-s)/4},\ v^{(c-s)/4}$ make sense.)
\end{lem}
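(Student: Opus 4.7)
The argument is the $\Pin(2)$-equivariant analog of \cref{lem: expression of omega}.
The plan is to (i) extract cohomology classes in the minimal admissible degrees directly from the definitions of $a,b,c$, (ii) invoke \cref{lem: non zero red spin} to transport their non-vanishing to the $S^{1}$-fixed point set, and (iii) identify the image uniquely by a degree count in $\F[q,v]/(q^{3})$.

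First I would unpack the definitions. By the definition of $a$, there exists a class $\omega_{a}\in\tilde{H}_{\Pin(2)}^{a}(I_{\lambda}^{\mu})$ with $v^{l}\cdot\omega_{a}\neq 0$ for all $l\geq 0$; similarly the definitions of $b,c$ (together with the shifts $-1,-2$ built into them) furnish classes $\omega_{b}\in\tilde{H}_{\Pin(2)}^{b+1}(I_{\lambda}^{\mu})$ and $\omega_{c}\in\tilde{H}_{\Pin(2)}^{c+2}(I_{\lambda}^{\mu})$ all of whose $v$-powers are nonzero. Applying \cref{lem: non zero red spin} to each of them, the restrictions $i^{\ast}\omega_{a}$, $i^{\ast}\omega_{b}$, $i^{\ast}\omega_{c}$ are nonzero elements of $\tilde{H}_{\Pin(2)}^{\ast}((I_{\lambda}^{\mu})^{S^{1}})$.

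Next I would use the Thom isomorphism for the bundle $V^{0}_{\lambda}(\tilR)\to \pt$ (which is a $\Pin(2)$-bundle of real rank $s=\dim V_{\lambda}^{0}(\R)$): cup product with $\tau_{\Pin(2)}(V^{0}_{\lambda}(\tilR)^{+})$ identifies
\[
\tilde{H}_{\Pin(2)}^{\ast+s}\bigl(V^{0}_{\lambda}(\tilR)^{+}\bigr)\;\cong\;\tilde{H}_{\Pin(2)}^{\ast}(S^{0})\;\cong\;\F[q,v]/(q^{3}),
\]
with $\deg q=1$, $\deg v=4$, as recorded in \eqref{eq: cohomology ring Pin2}. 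Under this identification, the homogeneous piece of $\F[q,v]/(q^{3})$ in degree $4k$ is spanned by $v^{k}$, the piece in degree $4k+1$ is spanned by $qv^{k}$, and the piece in degree $4k+2$ is spanned by $q^{2}v^{k}$. Because all of $a,b,c$ are congruent to $s$ modulo $4$, the classes $i^{\ast}\omega_{a}$, $i^{\ast}\omega_{b}$, $i^{\ast}\omega_{c}$ lie respectively in the pieces corresponding to degrees $a-s\equiv 0$, $b+1-s\equiv 1$, $c+2-s\equiv 2$ modulo $4$ after removing the Thom class. In each of these three residue classes there is, by the monomial count above, a unique nonzero element of $\F[q,v]/(q^{3})$ in the relevant degree. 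Since $\F=\F_{2}$, any nonzero scalar must be $1$, so the nonvanishing of $i^{\ast}\omega_{\ast}$ already pins down the three identities \eqref{eq: expression of iomega spin}.

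The main point requiring care, rather than a genuine obstacle, is to make sure the stable range where \cref{lem: non zero red spin} applies is non-vacuous for the chosen classes; but this is automatic because $\omega_{a}$, $\omega_{b}$, $\omega_{c}$ have infinitely many nonzero $v$-multiples and we may replace them by high enough $v$-powers if necessary (the formulas for $i^{\ast}(v^{N}\omega_{\ast})$ then force the stated formulas for $i^{\ast}\omega_{\ast}$ by $\F[q,v]$-linearity of $i^{\ast}$ together with the fact that multiplication by $v$ is injective on the relevant graded pieces of $\F[q,v]/(q^{3})$). This completes the plan.
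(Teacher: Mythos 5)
Your proposal is correct and follows essentially the same route as the paper's proof: extract classes in the minimal admissible degrees from the definitions of $a,b,c$, apply \cref{lem: non zero red spin} to conclude nonvanishing under $i^{\ast}$, and then identify the image uniquely by the one-dimensionality of the relevant graded pieces of $\F[q,v]/(q^{3})$ over $\F=\Z/2$. The final paragraph about ensuring a ``non-vacuous stable range'' is superfluous --- \cref{lem: non zero red spin} has no range hypothesis to verify (the localization argument is internal to its proof), so the nonvanishing of $i^{\ast}\omega_{a}$, $i^{\ast}\omega_{b}$, $i^{\ast}\omega_{c}$ is immediate.
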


\begin{proof}
By the definition of $a,b,c$ and \cref{lem: non zero red spin}, there exist cohomology classes $\omega_{a} \in \tilde{H}_{S^{1}}^{a}(I_{\lambda}^{\mu}),\ \omega_{b} \in \tilde{H}_{S^{1}}^{b+1}(I_{\lambda}^{\mu}),\ \omega_{c} \in \tilde{H}_{S^{1}}^{c+2}(I_{\lambda}^{\mu})$
whose pull-back under $i$ do not vanish.
Forgetting the degree shift by $s$ for the moment, the non-zero cohomology classes
$i^{\ast}\omega_{a}, i^{\ast}\omega_{b}, i^{\ast}\omega_{c}$ are of the form $v^{l}, qv^{l'}, q^{2}v^{l''}$ respectively by the degree reason: 
\[
a \equiv s \mod 4,\quad b+1 \equiv s+1 \mod 4,\quad c+2 \equiv s+2 \mod 4.
\]
Recalling that the degree shift by $s$ is occurred by multiplying the equivariant Thom class, we can determine $l,l',l''$ and obtain \eqref{eq: expression of iomega spin}.
\end{proof}

\subsection{The families relative Bauer--Furuta invariant} 
\label{subsection: families relative Bauer--Furuta invariant}

In this section we consider a family version of the relative Bauer--Furuta invariant. 

Let $X$ be an oriented compact smooth $4$-manifold bounded by $Y$.
Assume that $b_{1}(X) =0$ and $Y$ is a connected rational homology $3$-sphere.
Let $\fraks$ be a spin$^{c}$ structure on $X$ and let $\frakt$ be the spin$^{c}$ structure on $Y$ defined as the restriction of $\fraks$.
Let $B$ be a compact topological space.
Throughout this paper, for a topological space $F$, denote by $\underline{F}$ the trivialized bundle $B \times F$ over $B$.

Assume that we have a $\Homeo(X,\del)$-bundle $X \to E \to B$.
We shall define a vector bundle
\[
\R^{b^{+}(X)} \to H^{+}(E) \to B
\]
as follows.
First, let us define the `maximal-positive-definite Grassmannian' 
\[
\mathrm{Gr}^{+}(H^{2}(X;\R))
\]
as the space of maximal-dimensional positive-definite subspace of $H^{2}(X;\R)$ with respect to the intersection form.
Since the group $\Homeo(X,\del)$ naturally acts on $\mathrm{Gr}^{+}(H^{2}(X;\R))$, we obtain a fiber bundle 
\[
\mathrm{Gr}^{+}(H^{2}(X;\R)) \to \mathrm{Gr}^{+}_{E} \to B
\]
associated with $E$ with fiber $\mathrm{Gr}^{+}(H^{2}(X;\R))$.
The Grasmannian $\mathrm{Gr}^{+}(H^{2}(X;\R))$ is contractible,
since this is diffeomorphic to the quotient of the Lie group $O(b^{+}(X),b^{-}(X))$ by the maximal compact subgroup $O(b^{+}(X)) \times O(b^{-}(X))$.
Therefore there exists a section of $\mathrm{Gr}^{+}_{E} \to B$, which is unique up to isotopy.
One section  corresponds to a vector bundle of rank $b^{+}(X)$, and we denote by $H^{+}(E)$ the vector bundle.
This vector bundle is determined uniquely by $E$ up to isomorphism, and we omit the choice of section of $\mathrm{Gr}^{+}_{E} \to B$ from our notation $H^{+}(E)$.

\begin{rem}
\label{rem: spinc wo metric}
A spin structure on an oriented $n$-manifold for which a Riemannian metric is not given can be defined as a lift of the structure group of the frame bundle from $GL^{+}(n,\R)$ to the double cover $\widetilde{GL}^{+}(n,\R)$, where $GL^{+}(n,\R)$ is the group of real square matrixes of order $n$ of $\det >0$.
A spin$^{c}$ structure is also similarly defined using $(\widetilde{GL}^{+}(n,\R) \times S^{1})/\pm1$ instead of $Spin^{c}(n)$.
If a spin structure or a spin$^{c}$ structure $\fraks$ is given on $X$, let us define groups
\[
\Aut(X,\fraks),\quad \Aut((X,\fraks), \del)
\]
as follows.
First $\Aut(X,\fraks)$ denote the automorphism group of the spin or spin$^{c}$ $4$-manifold $(X,\fraks)$.
Namely, each element of $\Aut(X,\fraks)$ is a pair $(f,\tilde{f})$ of a diffeomorphism $f$ which preserves orientation and the isomorphism class of $\fraks$, and a lift $\tilde{f}$ of $f$ to a bundle automorhism of the principal $\widetilde{GL}^{+}(n,\R)$- or $(\widetilde{GL}^{+}(n,\R) \times S^{1})/\pm1$-bundle $P$ corresponding to $\fraks$.
The group $\Aut((X,\fraks), \del)$ is defined as the subgroup of $\Aut(X,\fraks)$ consisting of pairs $(f,\tilde{f})$ whose restrictions to $\del X$ and $P|_{\del X}$ are the pair of the identity maps.
\end{rem}

From here we assume that a reduction of $E$ to $\Aut((X,\fraks), \del)$ is given.
Namely, $(X,\fraks) \to E \to B$ is a smooth fiber bundle of spin$^{c}$ $4$-manifolds equipped with a trivialization
\[
 ( (Y ,\frakt ) \to E_Y \to B ) \cong  ( (Y ,\frakt ) \to ( Y , \frakt )\times B  \to B ),
 \]
 where $E_Y$ is a fiber bundle over $B$ defined to be
 \[
\bigsqcup_{b \in B} \partial E_b \to B. 
 \]
 
 Fix a fiberwise metric $g_E$ on $E \to B$ such that, near a color neighborhood $[-\varepsilon, 0 ] \times  \partial E_b$ of $\partial E_b$,
 \[
 g_E|_{[-\varepsilon, 0 ] \times  \partial E_b}  = \pi^{\ast}g_Y+ dt^2,
 \]
 where $g_Y$ is a fixed Riemann metric on $Y$ and $\pi : E_b \to \partial (E_b)=Y$ is the projection.
 Let $\{\widehat{A}_b\}_{b \in B}$ be a fiberwise reference spin$^c$-connection on $E$ such that $\widehat{A}_b|_{\partial E_b}=a_0$ for any $b \in B$.
Once we fix the data $(E, g_E)$, the following families of vector bundles over $B$
\[
S^+_E,\quad  S^-_E,\quad i \Lambda^*_E,\quad i\Lambda^+_E
\]
are associated.
The restrictions of them over $b \in B$ are the positive and negative spinor bundles with respect to $(g_{E_b}, \fraks)$, and $i\Lambda^*_{X}$ and  $i\Lambda^+_{X}$ with respect to $g_{E_b}$ respectively, where $\Lambda^+_{X}$ denotes the space of self-dual $2$-forms.
We use the notation
\[
L^2_k (S^+_E),\quad  L^2_k(S^-_E),\quad  L^2_k (i\Lambda^*_E),\quad  L^2_k  (i \Lambda^+_E)
\]
to denote the spaces of fiberwise $L^2_k$-sections.
In order to obtain the Fredholm property for a certain operator, we shall use a subspace $L^2_{k} ( i\Lambda^1_{E})_{CC}$ of $L^2_{k} ( i\Lambda^1_{E})$ defined by
\[
L^2_k (i\Lambda^1_E)_{CC} :=\bigsqcup_{b \in B}  \Set{ a \in L^2_k (i  \Lambda ^1_{E_b})   | d^* a =0,\  d^*{\bf t}a=0 }, 
\]
where ${\bf t}$ is the restriction as differential forms along the inclusion $Y= \partial E_b  \inc E_b$. 
This gauge fixing condition is called the {\it double Coulomb condition} and was introduced by Khandhawit~\cite{Kha15}.

\begin{rem}
Although Khandhawit imposed the condition $\int_Y {\bf t}(* a)=0 $, we can omit this condition. 
This is because we have
\[
\int_Y {\bf t}(* a)  = \int_Y{ \bf t}1 \wedge *{\bf n}a_b 
=  \int_{E_b} d1 \wedge *a_b  - \int_{E_b} 1 \wedge *d^*a_b =0
\]
by the Stokes theorem for any $a_b \in L^2_{k} ( i\Lambda^1)_{CC}$, where ${\bf n}$ is the normal component. 
Here we used the connectivity of $Y$. 
\end{rem}

For any positive real number $\mu$, now we have the fiberwise Seiberg--Witten map over a slice
\[
\mathcal{F}^\mu : L^2_{k} ( i\Lambda^1)_{CC}  \oplus L^2_k ( S^+_E)  \to     L^2_{k-1} ( i\Lambda^+)\oplus L^2_{k-1} ( S^-_E)  \oplus \underline{V^\mu_{-\infty}}
\]
defined by 
\[
\mathcal{F}^\mu ( (A_b, \Phi_b )_{b \in B} ) = (  \rho_b (F^+ (A_b)) -  (\Phi_b, \Phi_b)_0 , {D}_{\widehat{A}_b+ A_b} ( \phi),p^\mu_{-\infty} \circ r_b ( A_b, \Phi_b ) )_{b \in B} , 
\]
where $F^+ (A_b)$ is the self-dual part of the curvature of a fiberwise connection $A_b$, $\rho_b$ is the Clifford multiplication, ${D}_{\widehat{A}_b+ A_b}$ is the fiberwise Dirac operator with respect to a connection $\widehat{A}_b+ A_b$, and 
\[
r_b : L^2_{k} ( i\Lambda^1)_{CC}  \oplus L^2_k ( S^+_E) \to  \underline{\operatorname{Coul}_k(Y,\fraks)}
\]
 is the fiberwise restriction.
We decompose $\mathcal{F}^{\mu}$ as the sum of a fiberwise linear operator 
\begin{align}
L^{\mu} = 
\{L^\mu_b= (d^+, D_{\widehat{A}_b },  p^\mu_{-\infty} r_b )\}_{b \in B}
\end{align}
 and a fiberwise quadratic part
 \[
c^{\mu}
= \{ c^\mu_b  = (- (\Phi_b \otimes \Phi^*_b)_0, \rho (A_b) (\Phi_b), 0) \}_{b \in B}.
 \] 
 We often use a decomposition of the operator $L^\mu_b$ for each $b$ as the sum of the real operator 
 \[
 L^\mu_{b, \R}= (d^+, 0,  p^\mu_{-\infty}r_b ) : L^2_{k} ( i\Lambda^1_{E_{b}})_{CC} \to     L^2_{k-1} ( i\Lambda^+_{E_{b}})\oplus V^\mu_{-\infty}(\R)
 \]
 and the complex operator 
 \[
 L^\mu_{b, \C} =(0 , D_{\widehat{A}_b },  p^\mu_{-\infty}r_b ) :  L^2_k ( S^+_{E_{b}})  \to    L^2_{k-1} ( S^-_{E_{b}})  \oplus V^\mu_{-\infty}(\C). 
 \]
  
It is checked in \cite{Kha15} that the fiberwise linear operator ${L}^\mu_b$ is Fredholm on each fiber and the Fredholm index is given by 
\[
2\ind_{\mathbb{C}}^{APS} D^+_{A_b}  -b^+(X) - \dim V^\mu_0 , 
\]
 where $\ind_{\mathbb{C}}^{APS} D^+_{A_b}$ is the Fredholm index of $L^\mu_{b, \C}$ as a complex operator. 

The following lemma provides fundamental properties of the linear map $L^0_{b, \R} :  L^2_{k} ( i\Lambda^1_{E_{b}})_{CC}  \to   L^2_{k-1} ( i\Lambda^+_{E_{b}})\oplus V^0_{-\infty} (\R)$. 

\begin{lem}
\label{linear injection}
Under the assumption $b_1(X)=0$, 
the operator
\[
L^0_{b, \R} :  L^2_{k} ( i\Lambda^1_{E_{b}})_{CC}  \to   L^2_{k-1} ( i\Lambda^+_{E_{b}})\oplus V^0_{-\infty} (\R)
\]
satisfies the following properties:
\begin{itemize}
\item[(i)] The linear map $L^0_{b, \R}$ is an injection for any $b \in B$. 
\item[(ii)] For any $b \in B$, there is a natural isomorphism 
\[
\operatorname{Coker} L^0_{b, \R}\cong H^+(X_b; \R), 
\]
where $H^+(X; \R)$ is the space of self-dual harmonic $2$-forms on $X$.
Moreover, the correspondence 
\[
 b\mapsto  \operatorname{Coker} L^0_{b, \R} \cong H^+(X_b; \R)
 \]
 gives a section of 
\[
\mathrm{Gr}^{+}(H^{2}(X;\R)) \to \mathrm{Gr}^{+}_{E} \to B. 
\]
\end{itemize}
\end{lem}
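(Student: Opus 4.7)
I begin with a Stokes-type identity. Suppose $a\in\ker L^0_{b,\R}$; then $d^+a=0$, $d^*a=0$, $d^*{\bf t}a=0$ on $Y$, and ${\bf t}a$ lies in the positive part $V^{\infty}_0(\R)$ of $*d$ acting on $\ker d^*\subset\Omega^1(Y)$ (the complement of the subspace onto which $p^0_{-\infty}$ projects). The key is to evaluate $\int_{X_b}da\wedge da$ in two ways. Because $d^+a=0$, the form $da$ is anti-self-dual, so $\int_{X_b}da\wedge da=-\int_{X_b}|da|^2\le 0$; on the other hand, Stokes' theorem gives
\[
\int_{X_b}da\wedge da=\int_Y {\bf t}a\wedge d\,{\bf t}a=\int_Y\langle {\bf t}a,*_Y d\,{\bf t}a\rangle\,\mathrm{dvol}_Y\ge 0,
\]
because the spectral expansion of ${\bf t}a\in V^{\infty}_0(\R)$ makes this a sum of positive terms. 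Both quantities must therefore vanish, forcing $da=0$ and (using that $*d$ has trivial kernel on $\ker d^*$ over the rational homology sphere $Y$) also ${\bf t}a=0$.

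\textbf{Finishing (i).} The conditions $da=0$, $d^*a=0$, and ${\bf t}a|_{\partial X_b}=0$ exhibit $a$ as a Dirichlet harmonic $1$-form on $X_b$, and classical Hodge theory identifies this space with $H^1(X_b,\partial X_b;\R)$. The long exact sequence of $(X_b,\partial X_b)$, together with $b_1(X_b)=0$ and the connectedness of $\partial X_b=Y$, gives $H^1(X_b,\partial X_b;\R)=0$, whence $a=0$.

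\textbf{Plan for (ii).} Fredholmness of $L^0_{b,\R}$ follows from Khandhawit's analysis of the double Coulomb slice; extracting the real part of his index formula at $\mu=0$ yields $\ind L^0_{b,\R}=-b^+(X_b)$, so combined with the injectivity from (i) we get $\dim\Coker L^0_{b,\R}=b^+(X_b)$. I then define the natural map $\Psi\colon H^+(X_b;\R)\to\Coker L^0_{b,\R}$ by $\eta\mapsto[(\eta,0)]$, where $H^+(X_b;\R)$ is realized as the self-dual part of the Neumann harmonic $2$-forms (which identify with $H^2(X_b;\R)$). For the injectivity of $\Psi$: if $(\eta,0)=L^0_{b,\R}(a)$, integration by parts gives
\[
\int_{X_b}|\eta|^2=\int_{X_b}\eta\wedge d^+a=\int_{X_b}\eta\wedge da=\int_Y{\bf t}a\wedge {\bf t}\eta+\int_{X_b}a\wedge d\eta,
\]
and both right-hand terms vanish (the interior term because $d\eta=0$ by harmonicity, and the boundary term because ${\bf n}\eta=0$ combined with the self-duality of $\eta$ forces ${\bf t}\eta=0$). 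Hence $\eta=0$, and the dimension count upgrades the injective $\Psi$ to an isomorphism. The main obstacle in this part is tracking the boundary conditions on $\eta$ cleanly; the correct choice is the Neumann condition that underlies the identification of Neumann harmonic forms with absolute cohomology.

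\textbf{Family section.} All ingredients -- the operator $L^0_{b,\R}$, the double Coulomb slice, the spectral projection, and the Hodge decomposition -- depend continuously on $b\in B$ through the fiberwise metric $g_E$. Hence $b\mapsto\Coker L^0_{b,\R}$ assembles into a continuous rank-$b^+(X)$ sub-bundle of the trivial bundle with fiber $H^2(X;\R)$, and its image under $\Psi$ lies in $\mathrm{Gr}^+(H^2(X;\R))$ because the self-dual harmonic forms span a positive-definite subspace of $H^2$ for the intersection form. This provides the section of $\mathrm{Gr}^+_E\to B$ asserted in (ii).
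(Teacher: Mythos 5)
Your argument for part (i) is a genuinely different and cleaner route than the paper's. The paper establishes injectivity indirectly: it introduces the APS operator $L^{AHS}_b$ with spectral boundary condition, invokes \cite[Prop.\ 3.11, Prop.\ 4.9]{APSI} to identify $\Ker L^{AHS}_b$ with $\im(H^1(E_b,\partial E_b)\to H^1(E_b))=0$, passes through the intermediate operator $\wt{L}^{AHS}_b$, and applies the snake lemma. You instead give a one-line Stokes argument: for $a\in\Ker L^0_{b,\R}$ the identity $-\|da\|^2_{L^2}=\int_{X_b}da\wedge da=\int_Y\langle{\bf t}a,*_Yd\,{\bf t}a\rangle$, together with the fact that ${\bf t}a$ lies in the strictly positive spectrum of $*_Yd$ on $\Ker d^*_Y$, forces both sides to vanish, giving $da=0$, ${\bf t}a=0$, and then the Hodge--Morrey--Friedrichs identification $\mathcal{H}^1_D\cong H^1(X_b,\partial X_b;\R)=0$ finishes. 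This is a real simplification and, modulo the usual boundary-orientation conventions, it is correct. The index computation $\ind L^0_{b,\R}=-b^+(X_b)$ extracted from Khandhawit's formula is also correct.

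The gap is in part (ii), specifically in the domain of your map $\Psi$. You define $H^+(X_b;\R)$ as ``the self-dual part of the Neumann harmonic $2$-forms.'' But the Hodge star does \emph{not} preserve the Neumann condition ${\bf n}\omega=0$ (it takes it to the Dirichlet condition ${\bf t}(*\omega)=0$), so $\mathcal H^2_N$ does not split into self-dual and anti-self-dual subspaces, and ``the self-dual part'' has no unambiguous meaning. If you interpret it as $\mathcal H^2_N\cap\Omega^+(X_b)$, note that for a self-dual form in a product-metric collar, ${\bf n}\omega=0$ automatically implies ${\bf t}\omega=0$ (writing $\omega=dt\wedge\beta+*_Y\beta$ near the boundary, ${\bf n}\omega=0$ forces $\beta|_{t=0}=0$ and hence $*_Y\beta|_{t=0}=0$). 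So these forms vanish at $\partial X_b$, and the Cauchy problem for the overdetermined elliptic system $d\omega=0$, $\omega\in\Omega^+$, then forces $\omega\equiv 0$ by unique continuation from the boundary. That is, the space you propose as the domain of $\Psi$ is $\{0\}$ whenever $b^+(X_b)>0$, so it cannot map onto a $b^+$-dimensional cokernel. If instead you take the image of $\mathcal H^2_N$ under the pointwise projection to $\Omega^+$, then $d\eta\neq 0$ in general and the step ``the interior term vanishes because $d\eta=0$'' fails. The correct identification cannot be carried out purely with harmonic forms on the compact fiber: the paper passes to the cylindrical-end manifold $E_b^*=E_b\cup[1,\infty)\times Y$ and uses the APS identification $\Coker L^{AHS}_b\cong H^0(E_b;\R)\oplus H^+(E_b;\R)$ via extended $L^2$ self-dual harmonic forms (\cite[Prop.\ 4.9, Cor.\ 4.11]{APSI}), which have nontrivial boundary values on $\partial E_b$ and only decay at $t\to\infty$. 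Your injectivity argument would need to be modified accordingly (the boundary term does not vanish term-by-term, but rather pairs nontrivially with the spectral component), which is in effect what the paper's snake-lemma comparison with $\wt{L}^{AHS}_b$ accomplishes. The concluding paragraph on assembling a section of $\mathrm{Gr}^+_E$ is fine once (ii) is repaired.
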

\begin{proof}
 In order to prove (i), we consider the following two operators $L^{AHS}_b$ and $\wt{L}_b^{AHS}$.
 
The first operator is the Atiyah--Hitchin--Singer operator with a spectral boundary condition
\[
L^{AHS}_b:= d^*+d^+ + \pr_{H^-}\circ \wt{r} :   L^2_k ( i\Lambda^1_{E_b}) \to L^2_{k-1} ( i\Lambda^0_{E_b} \oplus i \Lambda^+_{E_b}) \oplus H^-,  
\]
 where 
\begin{itemize}
\item The linear space $H^-$ is the $L^2_{k-\frac{1}{2}}$-completion of the non-positive eigenspace of the operator 
\[
\wt{l}  :i \im d \oplus i\Ker d^*  \oplus   i\Om^0(Y)\to i\im d \oplus i\Ker d^*  \oplus  i \Om^0(Y)
 \]
 defined by 
 \[
 \wt{l}  :=  \begin{pmatrix} 
 0 & 0 & -d \\
 0 & * d & 0  \\
 -d^* & 0 &0 \\
  \end{pmatrix}. 
  \]
 \item The operator 
$\wt{r}_b : L^2_k ( i\Lambda^1_{E_b}) \to  V$ comes from the restriction with respect to the inclusion $i_b: Y= \partial E_b \to E_b$, where 
\[
V:= d(L^2_{k-\frac{1}{2}}( i\Lambda^0(Y))) \oplus i\Ker (d^*|_{ L^2_{k-\frac{1}{2}}( i\Lambda^1(Y))}   ) \oplus   L^2_{k-\frac{1}{2}}( i\Lambda^0(Y)) . 
 \]
The map $\wt{r}_b $ is defined by 
 \[
 \wt{r}_b (a ) := ( \pr_{d(L^2_{k-\frac{1}{2}}( i\Lambda^0(Y)))} i_b^*a , \pr_{ i\Ker (d^*|_{ L^2_{k-\frac{1}{2}}( i\Lambda^1(Y))} ) } i_b^*a,  i_b^* \iota_{\partial_t}  a_0 ) . 
 \]
 \item The operator $\pr_{H^-} : V \to H^- $ is the $L^2$-projection.
 \end{itemize}
 Regarding the first operator $L^{AHS}_b$, it is proved in \cite[Proposition 3.11]{APSI} that there exist isomorphisms 
 \begin{align}
 \label{APS result}
 \begin{split}
 \Ker L^{AHS}_b \cong \Set{ a \in L^2(\Lambda^1_{E_b^*}) | d^*a=d^+a=0 } \text{ and }\\ 
 \Coker L^{AHS}_b \cong \Set{ (b_1, b_2)  \in L^2_{\text{ex}}(\Lambda^0_{E_b^*}\oplus \Lambda^+_{E_b^*} ) |  d b_1 = 0 \text{ and } d^* b_2 = 0  } .
 \end{split}
 \end{align} 
 for each $b \in B$, where $E_b^* := E_b \cup [1,\infty) \times Y$ and $L^2_{\text{ex}}$ means extended $L^2$-sections introduced in \cite[page 58]{APSI}. Note that $E^*:= \bigcup_{b \in B} E_b^* \to B$ gives a fiber bundle whose fiber is the non-compact $4$-manifold $E_b^*$.
 In \eqref{APS result}, we have used a fiberwise Riemann metric $g_{E^*}$ on $E^*$ defined as an extension of $g_E$ such that 
 \[
 g_{E^*}|_{[1, \infty)\times Y } = \pi^* g_Y + dt^2 . 
 \]
  By integration by parts, we can conclude that 
  \begin{align}\label{APS2} 
 \Set{ a \in L^2(\Lambda^1_{E_b^*}) | d^*a=d^+a=0 } = \Set{ a \in L^2(\Lambda^1_{E_b^*}) | d^*a=da=0 }. 
 \end{align}
 Moreover, \cite[Proposition 4.9]{APSI} implies that  
\begin{align}\label{APS3}
\Set{ a \in L^2(\Lambda^1_{E_b^*}) | d^*a=da=0 }= \im ( H^1(E_b,\partial E_b; \R) \to H^1(E_b;\R) ) = \{0\}.  
\end{align}
Combining \eqref{APS result}, \eqref{APS2} and \eqref{APS3}, we obtain
\[
 \Ker L^{AHS}_b = \{0\}. 
 \]
Recall that we imposed $b_1(Y)=0$, and the kernel of $\wt{l}$ consists of constant functions on $Y$. This implies that, for any element $(b_1, b_2)  \in L^2_{\text{ex}}(\Lambda^0_{E_b^*}\oplus \Lambda^+_{E_b^*})$, there exists a constant $c$ such that $(b_1-c, b_2) \in L^2(\Lambda^0_{E_b^*}\oplus \Lambda^+_{E_b^*})$. By combining this observation with \cite[Proposition 4.9]{APSI} and \cite[Corollary 4.11]{APSI}, we can see that 
\begin{align} \label{APS4}
 \Set{ (b_1, b_2)  \in L^2_{\text{ex}}(\Lambda^0_{E_b^*}\oplus \Lambda^+_{E_b^*} ) |  d b_1 = 0 \text{ and } d^* b_2 = 0  }  \cong  H^0 (E_b; \R) \oplus H^+(E_b; \R)
\end{align}
for each $b \in B$.
As a conclusion, we obtain 
\begin{align} \label{APS5}
 \Coker L^{AHS}_b \cong H^0 (E_b; \R) \oplus H^+(E_b; \R). 
\end{align}
Since isomorphisms \eqref{APS result} and \eqref{APS4} vary continuously with respect to $b \in B$, the isomorphism \eqref{APS5} gives a fiberwise isomorphism.

The second operator $\wt{L}^{AHS}_b$ is the AHS operator with a projection
\begin{align*}
\wt{L}^{AHS}_b &:= d^*+d^+ + (\pr_{H^-} + \Pi)\circ \wt{r}\\
&\quad:   L^2_k ( i\Lambda^1_{E_b}) \to L^2_{k-1} ( i\Lambda^0_{E_b} \oplus i \Lambda^+_{E_b}) \oplus V^0_{-\infty}(\R) \oplus \underline{W}_Y, 
\end{align*}
where 
$\underline{W}_Y =\mathcal{H}^0_Y \oplus d(L^2_{k-\frac{1}{2}}( i\Lambda^0(Y)))$, and
the map $ \Pi$ is the $L^2$-projection
\[
\Pi :V \to \mathcal{H}^0_Y \oplus d(L^2_{k-\frac{1}{2}}( i\Lambda^0(Y))) =\underline{W}_Y. 
\]
Here $\mathcal{H}^0_Y$ is the space of $i\R$-valued constant functions on $Y$.

 We shall compare $L_b^{AHS}$ with $L^0_b$ via $\wt{L}_b^{AHS}$.
First let us compare $L_b^{AHS}$ with $\wt{L}_b^{AHS}$:

\begin{lem}
The kernels and cokernels of $\wt{L}^{AHS}_b$ and ${L}^{AHS}_b$ are isomorphic to each other respectively, via the following isomorphism between the codomains of $\wt{L}^{AHS}_b$ and ${L}^{AHS}_b$:
\begin{align*}
\id \oplus \Pi : L^2_{k-1} ( i\Lambda^0_{E_b} \oplus i \Lambda^+_{E_b}) \oplus H^- 
 \to L^2_{k-1} ( i\Lambda^0_{E_b} \oplus i \Lambda^+_{E_b}) \oplus V^0_{-\infty}(\R) \oplus \underline{W}_Y,
\end{align*}
which is defined by 
\[
\id \oplus \Pi ( x_1, x_2 , (y_1, y_2, y_3)  ) := (x_1, x_2, y_2, \Pi (y_1, y_2, y_3 )) . 
\]

\end{lem}
\begin{proof}
The operator $\wt{l}$ can be written as the sum  of $*d$ on $\Ker d^*$ and $ l$, where $l$ is the self-adjoint operator 
 \[
  l:=   \begin{pmatrix} 
 0 & -d^*\\
 -d & 0 \\
  \end{pmatrix} :i \im d  \oplus   i\Om^0(Y)\to i\im d \oplus  i \Om^0(Y). 
  \]
  Let us denote by $\wt{H}^-$ the non-positive eigenspace of $l$.
It is checked in \cite{Kha15} that both of $\wt{H}^-$ and $\underline{W}_Y$ have $L^2_{k-\frac{1}{2}} (i\Lambda^0 (Y))_0\oplus 0$ as a complement in 
\[
L^2_{k-\frac{1}{2}} (i\Lambda^0 (Y)) \oplus dL^2_{k+\frac{1}{2}} (i\Lambda^0 (Y)),
\] 
where 
\[
L^2_{k-\frac{1}{2}} (i\Lambda^0 (Y))_0:= \Set{ a\in L^2_{k-\frac{1}{2}} (i\Lambda^0 (Y)) | \int_Y a \operatorname{dvol} = 0}. 
\]
 This proves $\id \oplus \Pi$ is an isomorphism. 
\end{proof}
Next, we compare $\wt{L}_b^{AHS}$ with $L^0_b$.
We have the following commutative diagram: 
\[
  \begin{CD}
     0 @. 0 \\
  @VVV    @VVV \\
  L^2_{k} ( i\Lambda^1)_{CC}   @>{ L^0_{b, \R} }>>   L^2_{k-1} ( i\Lambda^+)\oplus V^\mu_{-\infty} (\R)   \\
    @VVV    @VVV \\
      L^2_k ( i\Lambda^1_{E_b})  @>{\wt{L}^{AHS}_b}>>  L^2_{k-1} ( i\Lambda^0_{E_b} \oplus i \Lambda^+_{E_b}) \oplus V^0_{-\infty}(\R) \oplus \underline{W}_Y \\ 
      @V{d^* \oplus \prod \circ r} VV    @VVV \\
       L^2_{k-1} ( i\Lambda^0_{E_b})_0 \oplus  \underline{W}_Y @>>>  L^2_{k-1} ( i\Lambda^0_{E_b}) \oplus  \underline{W}_Y \\
         @VVV    @VVV \\
              0 @. 0,   \\
  \end{CD}
\]
where 
\[
  L^2_{k-1} ( i\Lambda^0_{E_b})_0 := \Set{ a \in L^2_{k-1} ( i\Lambda^0_{E_b}) |  \int_X a \operatorname{dvol} = 0}. 
  \]
It follows from this diagram and the snake lemma that there are fiberwise isomorphisms 
\[
\Ker L^0_{b} |_{ L^2_{k} ( i\Lambda^1)_{CC}  } \cong  \Ker \wt{L}^{AHS}_b
\]
and 
\[
\Coker L^0_{b} |_{ L^2_{k} ( i\Lambda^1)_{CC}  } \oplus H^0 (E_b; \R)   \cong \Coker \wt{L}^{AHS}_b. 
\]
By combining this with \eqref{APS result}, we conclude that there are fiberwise isomorphisms
 \begin{align*}
\Ker L^0_{b, \R} \cong \{0\},\quad \Coker L^0 _{b, \R} \cong H^+ (E_b; \R). 
 \end{align*}
This completes the proof of \cref{linear injection}.
\end{proof}


Next, to carry out finite-dimensional approximation, we take a sequence of finite-dimensional vector subbundles $W_1^n$ of  $L^2_{k-1} ( i\Lambda^+_{E})\oplus L^2_{k-1} ( S^-_E)$. 

\begin{lem} \label{fin dim app1} There exists a sequence of finite-dimensional vector subbundles $W_1^n$ of  $L^2_{k-1} ( i\Lambda^+_{E})\oplus L^2_{k-1} ( S^-_E)$
 such that 
 \begin{itemize}
 \item the sequence is an increasing sequence 
 \[
 W_1^0 \subset W_1^1 \subset W_1^2 \subset W_1^3 \subset \cdots \subset L^2_{k-1} ( i\Lambda^+_{E})\oplus L^2_{k-1} ( S^-_E), 
 \]
 \item the equality 
\begin{align}\label{trans}
\operatorname{Im} L^0_{b} \cap (L^2_{k-1} ( i\Lambda^+_{E_b})\oplus L^2_{k-1} ( S^-_{E_b}) +   (W_1^0)_{b} ) = L^2_{k-1} ( i\Lambda^+_{E_b})\oplus L^2_{k-1} ( S^-_{E_b})
\end{align}
holds for all $b \in B$, and
\item  the projection $\pr_{(W_1^n)_b } : L^2_{k-1} ( i\Lambda^+_{E_b})\oplus L^2_{k-1} ( S^-_{E_b}) \to (W_1^n)_b$ satisfies 
\[
\| \pr_{(W_1^n)_b } \gamma_b  - \gamma_b \|_{L^2_{k-1}} \to 0 \text{ as } n \to \infty
\]
 for any $\gamma_b \in  L^2_{k-1} ( i\Lambda^+_{E_b})\oplus L^2_{k-1} ( S^-_{E_b})$  and $b\in B$.
\end{itemize}
\end{lem}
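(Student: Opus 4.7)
The plan is to construct the sequence $\{W_1^n\}$ in two stages: first produce a finite-rank subbundle $W_1^0$ witnessing the transversality condition \eqref{trans}, and then enlarge it to an increasing sequence whose fiberwise union is dense. Throughout I think of $L^2_{k-1}(i\Lambda^+_{E}) \oplus L^2_{k-1}(S^-_E)$ as a Hilbert bundle over $B$ with continuous local trivializations.

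For Stage 1, I would exploit the fact that for each $b \in B$ the operator $L^0_b$ is Fredholm, so $\Coker L^0_b$ is finite-dimensional; indeed \cref{linear injection} identifies its real part with $H^+(E_b;\R)$ and the complex part is the cokernel of the Atiyah--Patodi--Singer problem for the Dirac operator. For each $b_0 \in B$, pick a finite-dimensional subspace $V_{b_0}$ of the fiber at $b_0$ that projects surjectively onto $\Coker L^0_{b_0}$, and extend it via a local trivialization to a continuous finite-rank subbundle over a neighborhood of $b_0$. Since the transversality $\im L^0_b + (V)_b = L^2_{k-1}(i\Lambda^+_{E_b}) \oplus L^2_{k-1}(S^-_{E_b})$ is an open condition in $b$ (by upper semicontinuity of the cokernel dimension of a continuous family of Fredholm operators), after shrinking the neighborhood it persists. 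Compactness of $B$ gives a finite subcover, and I take $W_1^0$ to be the sum of the resulting local finite-rank subbundles, realized inside the ambient Hilbert bundle.

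For Stage 2, I would fix a fiberwise self-adjoint elliptic operator $P$ on $L^2(i\Lambda^+_E) \oplus L^2(S^-_E)$ with purely discrete spectrum bounded below --- e.g., $(1+\Delta_{E_b})$ on self-dual forms together with $(1+D_{\widehat{A}_b}D_{\widehat{A}_b}^*)$ on spinors, with appropriate APS-type boundary conditions. Choose real numbers $\lambda_n \to \infty$ that are not eigenvalues of $P_b$ for any $b \in B$; since each $\mathrm{Spec}(P_b)$ is discrete and $B$ is compact, one can always perturb a prescribed sequence of cutoffs to achieve this avoidance. Letting $E_n(b)$ denote the span of the eigenspaces of $P_b$ with eigenvalue $\leq \lambda_n$ gives a finite-rank continuous subbundle $E_n$, with $E_n \subset E_{n+1}$ and $\bigcup_n E_n$ fiberwise dense. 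Finally set $W_1^n := W_1^0 + E_n$, which is an increasing sequence of finite-rank subbundles satisfying all three conditions.

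The main obstacle is controlling the continuous $b$-dependence of the eigenspace sums $E_n$ across eigenvalue crossings; this is precisely what the avoidance condition on $\lambda_n$ resolves, since the spectral projector onto eigenvalues $\leq \lambda_n$ depends continuously on $b$ exactly when $\lambda_n \notin \mathrm{Spec}(P_b)$. A secondary technical issue is making the patching in Stage 1 yield an honest continuous finite-rank subbundle rather than a fiberwise family of subspaces of jumping dimension, but this is handled by the standard procedure of summing local extensions inside the ambient Hilbert bundle after verifying that the transversality condition remains open under such sums.
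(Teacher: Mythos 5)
Your Stage~1 is essentially the paper's construction: at each point take a finite-dimensional complement of $\im L^0_b$, extend to a local subbundle, use openness of surjectivity and compactness of $B$ to patch finitely many such pieces into $W_1^0$. That part is fine.

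The gap is in Stage~2. You enlarge $W_1^0$ by spectral projections of a fiberwise elliptic operator $P_b$ and justify continuity of the resulting subbundles by ``choosing $\lambda_n$ not in $\mathrm{Spec}(P_b)$ for any $b\in B$, perturbing if necessary.'' This avoidance is in general impossible. Each $\mathrm{Spec}(P_b)$ is discrete, but the union $\bigcup_{b\in B}\mathrm{Spec}(P_b)$ need not be: for a connected compact $B$ of positive dimension, each eigenvalue branch $\lambda_j(b)$ sweeps out a closed interval, and past some threshold these intervals typically overlap and cover all of $[a,\infty)$. No sequence $\lambda_n\to\infty$ can then stay off all spectra, and the spectral projector $\mathbf{1}_{(-\infty,\lambda_n]}(P_b)$ jumps in rank as $b$ varies, so $E_n$ fails to be a vector bundle. (This obstruction is exactly why one cannot naively transplant the single-manifold spectral-truncation argument to families; on the 3-manifold side it works because the boundary family is trivialized, so the boundary operator $l$ is fixed, but the 4-dimensional operator genuinely varies with $b$.)

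The paper sidesteps this entirely by invoking Kuiper's theorem: the Hilbert bundle $L^2_{k-1}(i\Lambda^+_E)\oplus L^2_{k-1}(S^-_E)\to B$ admits a \emph{global} trivialization, and one also trivializes the orthogonal complement of $W_1^0$ inside it. This produces a global orthonormal frame $\{e_i\}_{i\ge 1}$, and one simply sets $W_1^n := W_1^0 \oplus \R\langle e_1\rangle \oplus\cdots\oplus\R\langle e_{n-1}\rangle$. Nestedness is built in, and fiberwise density is immediate from completeness of the frame. This gives the required increasing sequence with no spectral issues to control, so if you want to preserve your Stage~2 structure you should replace the spectral truncation with this global-frame argument.
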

\begin{proof}
For a fixed point $b_1 \in B$, we define 
\[
W_{b_1} := (\im L^{\mu_n}_{b_1} \cap L^2_{k-1} ( i\Lambda^+_{E})\oplus L^2_{k-1} ( S^-_E) )^{\perp_{L^2_{k-1}} } .
\]
 By using a global trivialization of $L^2_{k-1} ( i\Lambda^+_{E})\oplus L^2_{k-1} ( S^-_E)$, we extend a vector space $W_{b_1}$ to a subbundle $\wt{W}_{b_1}$ of $L^2_{k-1} ( i\Lambda^+_{E})\oplus L^2_{k-1} ( S^-_E) \to B$. 
Since surjectivity is an open condition, for any element $b$ in a small neighborhood of $b_1$, 
\[
\operatorname{Im} L^0_{b} +    \wt{W}_{b_1} = L^2_{k-1} ( i\Lambda^+_{E})\oplus L^2_{k-1} ( S^-_E). 
\]
 Since $B$ is compact, we can take a finite sequence of points $ b_1, \cdots, b_k$ of $B$ and a finite sequence of subbundles $\wt{W}_{b_1}, \cdots , \wt{W}_{b_k}$ of $L^2_{k-1} ( i\Lambda^+_{E})\oplus L^2_{k-1} ( S^-_E) \to B$ such that for any $b \in B$, we have that
\[
\operatorname{Im} L^0_{b} \oplus    \bigoplus_{  1 \leq i \leq k}  \wt{W}_{b_i}|_b = L^2_{k-1} ( i\Lambda^+_{E_b})\oplus L^2_{k-1} ( S^-_{E_b}).
\]
Define 
\[
W_1^1 := \bigoplus_{  1 \leq i \leq k}  \wt{W}_{b_i}. 
\]
By using a global trivialization of the Hilbert bundle 
\[
L^2_{k-1} ( i\Lambda^+_{E})\oplus L^2_{k-1} ( S^-_E) \cap (W_1^1)^{\perp_{L^2_{k-1}}} \to B, 
\]
we set 
\[
W_1^n := W_1^1 \oplus \R\langle e_1 \rangle \oplus \cdots \oplus \R\langle e_{n-1} \rangle, 
\]
where $\{e_i\}_{ i \in \Z_{>0}} $ is a global orthonormal basis of $L^2_{k-1} ( i\Lambda^+_{E})\oplus L^2_{k-1} ( S^-_E) \cap (W_1^1)^{\perp_{L^2_{k-1}}}$. 
Then one can check the third condition.
\end{proof}

Take sequences of numbers $\lambda_n$ and $\mu_n$ such that $\lambda_n \to -\infty$ and $\mu_n \to \infty$ as $n \to \infty$. 
By \cref{fin dim app1}, we take a sequence of finite-dimensional vector subbundles $W_1^n$ of  $L^2_{k-1} ( i\Lambda^+_{E})\oplus L^2_{k-1} ( S^-_E)$ satisfying the conclusions of \cref{fin dim app1}. 
Let us define 
\[
W_0^n:= ( {L}^{\mu_n} )^{-1} ( W_1^n \oplus \underline{V_{\lambda_n}^{\mu_n}}  ) .
\]
 By \eqref{trans}, we can see that $W_0^n$ are finite dimensional subbundles of $ L^2_{k} ( i\Lambda^1)_{CC}  \oplus L^2_k ( S^+_E)$. The following lemma tells us the injectivity of $L^\mu_b$ for a sufficiently large $\mu$: 
\begin{lem} \label{inj} There exists $\mu_0>0$ such that, for any $\mu$ with $\mu>\mu_0$ and for any $b \in B$, 
 $L^\mu_b$ is injective. 
 \end{lem}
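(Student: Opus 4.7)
The plan is to reduce to the splitting $L^{\mu}_{b} = L^{\mu}_{b,\R} \oplus L^{\mu}_{b,\C}$ and handle the two summands separately. For the real summand, \cref{linear injection}(i) already yields that $L^{0}_{b,\R}$ is injective for every $b \in B$. Since $V^{\mu_{1}}_{-\infty}(\R) \subset V^{\mu_{2}}_{-\infty}(\R)$ whenever $\mu_{1} \leq \mu_{2}$, the spectral boundary condition $p^{\mu}_{-\infty} r_{b}(a) = 0$ is more restrictive for larger $\mu$, so $\Ker L^{\mu}_{b,\R} \subset \Ker L^{0}_{b,\R} = 0$ for every $\mu \geq 0$ and every $b \in B$. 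Thus there is nothing to prove on the real side, and the existence of $\mu_{0}$ is only an issue for the complex part $L^{\mu}_{b,\C} = (\D_{\widehat{A}_{b}}, p^{\mu}_{-\infty} r_{b})$.

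For the complex part I would argue by contradiction and compactness. Assume that no uniform $\mu_{0}$ exists: then there are sequences $b_{n} \in B$, $\mu_{n} \to \infty$, and $\phi_{n} \in L^{2}_{k}(S^{+}_{E_{b_{n}}})$ with $\|\phi_{n}\|_{L^{2}_{k}} = 1$, $\D_{\widehat{A}_{b_{n}}} \phi_{n} = 0$, and $p^{\mu_{n}}_{-\infty} r_{b_{n}}(\phi_{n}) = 0$. By compactness of $B$, pass to $b_{n} \to b_{\infty}$. Interior elliptic regularity for the Dirac operator, combined with the fact that the trivialization of $E_{Y}$ makes $\D_{a_{0}}$ on $Y$ literally the same across all fibers, yields a subsequence converging in $L^{2}_{k-1}$ to some $\phi_{\infty}$ on $E_{b_{\infty}}$ with $\D_{\widehat{A}_{b_{\infty}}} \phi_{\infty} = 0$. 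For any fixed $\mu_{0} > 0$ and all $n$ with $\mu_{n} > \mu_{0}$, the identity $p^{\mu_{n}}_{-\infty} r_{b_{n}}(\phi_{n}) = 0$ forces $p^{\mu_{0}}_{-\infty} r_{b_{n}}(\phi_{n}) = 0$, and by continuity of the trace map and of the finite-rank projection we obtain $p^{\mu_{0}}_{-\infty} r_{b_{\infty}}(\phi_{\infty}) = 0$. Since $\bigcup_{\mu_{0} > 0} V^{\mu_{0}}_{-\infty}(\C)$ is dense in $L^{2}_{k-\frac{1}{2}}(S|_{Y})$ by the spectral theorem for the elliptic self-adjoint operator $\D_{a_{0}}$, letting $\mu_{0} \to \infty$ forces $r_{b_{\infty}}(\phi_{\infty}) = 0$. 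By Aronszajn unique continuation for the Dirac operator, $\phi_{\infty} = 0$.

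It remains to upgrade this to a contradiction with $\|\phi_{n}\|_{L^{2}_{k}} = 1$ by strengthening $\phi_{n} \to 0$ from $L^{2}_{k-1}$ to $L^{2}_{k}$. This is where I expect the main obstacle: one needs an elliptic boundary estimate of the shape
\[
\|\phi_{n}\|_{L^{2}_{k}} \leq C \bigl( \|\D \phi_{n}\|_{L^{2}_{k-1}} + \|(1 - p^{\mu_{n}}_{-\infty}) r_{b_{n}}(\phi_{n})\|_{L^{2}_{k-\frac{1}{2}}} + \|\phi_{n}\|_{L^{2}_{k-1}} \bigr),
\]
with constant $C$ uniform in both $n$ and $b \in B$. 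The $b$-uniformity comes from compactness of $B$ together with the continuity of the fiberwise operators and the fiberwise Riemannian metric, while the $\mu$-uniformity should follow from the standard APS-type estimates for Dirac operators with spectral boundary conditions, since $1 - p^{\mu_{n}}_{-\infty}$ differs from the APS projection onto the non-negative eigenspaces of $\D_{a_{0}}$ only by a finite-rank piece. Once this estimate is secured, $\|\phi_{n}\|_{L^{2}_{k}} \to 0$, contradicting $\|\phi_{n}\|_{L^{2}_{k}} = 1$, and the proof is complete.
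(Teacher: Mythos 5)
Your proposal is correct and takes essentially the same compactness-and-unique-continuation approach as the paper's proof. The differences are organizational and expository: you dispose of the real summand separately via the monotonicity $\Ker L^{\mu}_{b, \R}\subset\Ker L^{0}_{b, \R}=0$ together with \cref{linear injection}(i) rather than folding it into the compactness argument, and you explicitly flag the uniform-in-$(n,b)$ elliptic boundary estimate needed to upgrade the convergence and contradict the normalization, a step the paper compresses into the phrase ``by the Fredholm property.''
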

 \begin{proof}
 Suppose that the conclusion is not true. Then we have sequences of points $\{\mu_n\}$ and $\{b_n \} \subset B$ and $x_{b_n} \in  L^2_{k} ( i\Lambda^1_{E_{b_n}})_{CC}$ such that 
 $\mu_n \to \infty$, $L^\mu_{b_n} (x_{b_n} ) =0$ and $x_{b_n}\neq 0$.  By scalar multiplication, we may assume that $\|x_{b_n}\|^2_{L^2_k}=1$. Since $B$ is compact, after taking a subsequence, we can assume that $\{b_n\}$ converges to some point $b_\infty \in B$.
 By the Fredholm property of ${L}^{\lambda_n}$, after taking a subsequence, we can assume that $\{x_{b_n} \}$ converges to some point $x_{b_\infty}$ which satisfies 
 \[
d^* (x_{b_\infty}) =0, ~ d^+ (x_{b_\infty}) =0, ~  r_{b_\infty } x_{b_\infty} =0, ~ \|x_{b_\infty}\|^2_{L^2} =1  \text{ and } D^+_{\widehat{A}_{b_\infty}}( x_{b_\infty}) =0. 
\] 
However this contradicts the unique continuation property of the operator $( d^* , d^+  , D^+_{\widehat{A}_{b_\infty}})$. 
 \end{proof}
 
We have an isomorphism 
\[
W_1^n + \underline{V^{\mu_n}_{\lambda_n}} + \operatorname{Ker} L^{\mu_n} \cong W_0^n + \operatorname{Coker} L^{\mu_n}
\]
between the virtual vector bundles.
As it is mentioned in \cite[page 923]{Ma03}, 
\[
\operatorname{Coker} L^{\mu_n} \cong \operatorname{Coker} L^{0}  \oplus  \operatorname{Coker} p_{0}^{\mu_n} \circ  \pr_{\Ker d^{\ast}}. 
\]
Moreover, if $n$ is sufficiently large, since 
\[
p_{0}^{\mu_n} \circ  \pr_{\Ker d^{\ast}}  \circ i^{\ast} : \Ker L^{0} \to V_{0}^{\mu_n}
\]
 is fiberwise injective by \cref{fin dim app1},
we have an identification
\[
\Ker L^0-  \operatorname{Coker} L^0 + \operatorname{Coker} L^{\mu_n} \cong \underline{V^{\mu_n}_0},
\]
and thus have
\begin{align}\label{decomp}
W_1^n + \underline{V^{\mu_n}_{\lambda_n}} + \Ker L^0- \operatorname{Coker} L^0 \cong   W^n_0 + \underline{V^{\mu_n}_0}
\end{align}
as virtual vector bundles over $B$.

Applying the projection, we obtain a family of maps
\[
\pr_{W_1^n\times \underline{V_{\lambda_n}^{\mu_n}}}  \circ \mathcal{F}^{\mu_{n}} |_{W_0^n} :W_0^n \to W_1^n \times  \underline{V_{\lambda_n}^{\mu_n}}
\]
whose $S^1$-invariant part is given by 
\[
(\pr_{W_1^n\times \underline{V_{\lambda_n}^{\mu_n}}}   \circ \mathcal{F}^{\mu_{n}} |_{W_0^n})^{S^1}  :W_0^n(\R)  \to W_1^n(\R) \times  \underline{V_{\lambda_n}^{\mu_n} (\R)}.
\]
This induces a map 
\begin{align}\label{fin app}
\pr_{W_1^n\times \underline{V_{\lambda_n}^{\mu_n}}}  \circ \mathcal{F}^{\mu_{n}} |_{W_0^n} :B(R,W_0^n) \to (W_1^n \times  \underline{V_{\lambda_n}^{\mu_n}} )^{+_B} , 
\end{align}
where $+_B$ denotes the fiberwise one-point compactification. 

For a subset $A$ in $V_{\lambda}^{\mu}$, set
\begin{align*}
&A^{+} := \Set{x \in A | \forall t>0,\  t \cdot x \in A}.
\end{align*}
To obtain a suitable index pair used for a Bauer--Furuta-type invariant from \eqref{fin app}, we need the following \cref{lem: ensure the existence of Conley index}.
Set
\begin{align}
\label{eq: tilde K one}
\wt{K}_{1 }  := B(R,W_0^n) \cap ( (\pr_{W_1^n}  \circ \mathcal{F}^{\mu_{n}})^{-1}    B(\epsilon_n, W_1^n) ) 
\end{align}
and 
\begin{align}
\label{eq: tilde K two}
\wt{K}_{2} := S(R,W_0^n) \cap ( (\pr_{W_1^n }  \circ \mathcal{F}^{\mu_{n}})^{-1}    B(\epsilon_n, W_1^n) )  
\end{align}
for a sequence of positive real numbers $\{ \epsilon_n \}_{ n \in \Z_{>0}} $ with $ \epsilon_n \to 0$ as $n\to \infty$.

\begin{lem}
\label{lem: ensure the existence of Conley index}
Suppose that the base space $B$ is compact. 
For sufficiently large $R, R'$ and $n$, the compact sets
\[
K_{1 }  := p_{V_{\lambda_n}^{\mu_n} }  \circ \mathcal{F}^{\mu_{n}}(\tilde{K}_{1})
\]
and 
\[
K_{2} :=  p_{V_{\lambda_n}^{\mu_n} }  \circ \mathcal{F}^{\mu_{n}}(\tilde{K}_{2})
\]
satisfy the assumption of \cite[Theorem~4]{Ma03}, \cite[Lemma~A.4]{Kha15} for $A:=B(R' ; V^{\mu_n}_{\lambda_n} )$, i.e. the following conditions hold: 
\begin{itemize}
\item[(i)]  if $x \in K_{1} \cap A^+ $, then $([0,\infty) \cdot x) \cap \partial A=\emptyset$, and 
\item[(ii)] $K_{2} \cap A^+  = \emptyset $.
\end{itemize}
\end{lem}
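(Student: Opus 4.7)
The plan is to adapt the compactness argument of Manolescu (the proof of \cite[Theorem~4]{Ma03}, in its relative form \cite[Lemma~A.4]{Kha15}) to the family setting, parametrising it over the compact base $B$. The key reduction is to the following uniform a priori bound: for $R'$ sufficiently large and $n$ sufficiently large, every $\tilde{x} \in W_0^n|_b$ satisfying $\|\pr_{W_1^n}\mathcal{F}^{\mu_n}(\tilde{x})\|_{L^2_{k-1}} \le \epsilon_n$ and whose forward orbit of $p_{V_{\lambda_n}^{\mu_n}}\mathcal{F}^{\mu_n}(\tilde{x})$ in $V_{\lambda_n}^{\mu_n}$ remains in $B(R';V_{\lambda_n}^{\mu_n})$ for all $t\ge 0$ must satisfy $\|\tilde{x}\|_{L^2_k}\le M$ for some constant $M$ independent of $b\in B$ and of $n$. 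Once this is established, choose $R>M$: condition (ii) is then immediate because no such $\tilde{x}$ can lie on $S(R,W_0^n)$, while (i) follows because any such $\tilde{x}\in B(R,W_0^n)$ produces a forward trajectory which, by \cref{isolating} applied with $R'$ also chosen larger than Manolescu's isolating radius, stays strictly inside $B(R)\subsetneq A$ and hence never meets $\partial A$.

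I would prove the uniform bound by contradiction. Suppose sequences $n_j\to\infty$, $b_j\in B$, and $\tilde{x}_j\in W_0^{n_j}|_{b_j}$ with $\|\tilde{x}_j\|_{L^2_k}\to\infty$, $\|\pr_{W_1^{n_j}}\mathcal{F}^{\mu_{n_j}}(\tilde{x}_j)\|_{L^2_{k-1}}\le \epsilon_{n_j}\to 0$, whose associated finite-dimensional forward trajectories in $V_{\lambda_{n_j}}^{\mu_{n_j}}$ remain in $B(R';V_{\lambda_{n_j}}^{\mu_{n_j}})$. By compactness of $B$, pass to $b_j\to b_\infty$. The Manolescu--Khandhawit convergence of finite-dimensional approximations, applied fibrewise over $b_\infty$ and using the uniform injectivity of $L^0_{b,\R}$ supplied by \cref{linear injection} together with the injectivity of $L^{\mu_n}_b$ from \cref{inj}, lets us extract a subsequential limit: the trajectories converge in $C^0_{\mathrm{loc}}$ to a forward trajectory of $\mathfrak{v}$ on $\operatorname{Coul}_k(Y,\frakt)$ bounded by $R'$, and the $\tilde{x}_j$ themselves converge in $L^2_k$ to a genuine Seiberg--Witten solution on $(X_{b_\infty},\fraks_{b_\infty})$ in double Coulomb gauge whose boundary restriction realises that initial point. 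Standard Seiberg--Witten compactness on the single fibre $X_{b_\infty}$ then forces $\|\tilde{x}_\infty\|_{L^2_k}$ to be finite, contradicting $\|\tilde{x}_j\|_{L^2_k}\to\infty$.

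The principal obstacle is making the convergence step uniform in the base parameter. This uniformity rests on three ingredients: the smooth dependence of the family $E\to B$ on $b$ (and hence of the operators $L^\mu_b$, their Fredholm constants, and the elliptic estimates controlling them), the normalisation that the fibrewise metric and reference spin$^c$-connection are $b$-independent on a collar of $\partial E$, and the compactness of $B$. These together force the relevant constants, including the isolating radius furnished by \cref{isolating}, to be bounded uniformly in $b$; modulo this, the argument is a direct parametrisation of the single-fibre Manolescu--Khandhawit proof over $B$.
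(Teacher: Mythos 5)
Your proposal diverges from the paper's approach in a way that introduces a genuine gap in the proof of condition (i).

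Your reduction bounds only the four-dimensional part $\|\tilde{x}\|_{L^2_k}$, but this does not control the boundary half-trajectory $y(t)$, and it is the half-trajectory that must avoid $\partial A = S(R';V_{\lambda_n}^{\mu_n})$ in condition (i). You try to close this gap by invoking \cref{isolating}, but that result concerns \emph{full} trajectories $x:\R\to V_\lambda^\mu$: it says that full trajectories confined to $B(2R)$ in fact lie in $B(R)$. The set $A^+$ only requires forward boundedness, so a point of $K_1\cap A^+$ has a forward half-trajectory in $A$ but need not extend to a bounded full trajectory, and \cref{isolating} gives no control. (There is also a mismatch of radii in your sentence ``stays strictly inside $B(R)\subsetneq A$'': $R$ is a disk radius in $W_0^n$, while $A$ lives in $V_{\lambda_n}^{\mu_n}$.) The paper instead proves (i) by first fixing, via compactness of $B$, a \emph{uniform} a priori bound $C_k$ on the $L^2_{k-1/2}$-norm of the half-trajectory $y(t)$ belonging to any finite $X$-trajectory $(x,y)$ over any fibre (Khandhawit's Corollary~4.3 made fibrewise uniform), chooses $R'>C_k$, and then argues by contradiction: a sequence $y_n$ with $\|y_n(t_n)\|=R'$ converges (via the family version of Khandhawit's Lemma~4.4, stated as \cref{converges}) to a genuine Seiberg--Witten half-trajectory $y_\infty$ having a point of norm $R'$, contradicting $R'>C_k$. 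You need this second, separate uniform constant on half-trajectories; the bound on $\|\tilde{x}\|$ alone cannot deliver it.

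A secondary issue: you frame the reduction as ruling out $\|\tilde{x}_j\|_{L^2_k}\to\infty$, but the elements $\tilde{x}_j$ you consider live in $\tilde K_1\subset B(R,W_0^{n_j})$ or $\tilde K_2\subset S(R,W_0^{n_j})$, so their norms are bounded by $R$ from the outset. What must actually be excluded for condition (ii) is that a point of norm exactly $R$ survives in the limit; the paper does this by fixing the universal constant $B_k$ first and then taking $R>B_k$, so that the limiting genuine solution $x_\infty$ with $\|x_\infty\|_{L^2_k}=R$ violates $\|x_\infty\|_{L^2_k}\le B_k$. Your version of (ii) is salvageable once phrased this way, but the proof of (i) requires the half-trajectory bound that your argument omits.
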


\begin{proof}
The proof is essentially the same as the proof of \cite[Proposition~4.5]{Kha15}.
We will prove by contradiction. 
First let us verify (i).
Before starting the discussion, we fix universal constants $B_k$ and $C_k^0$ of \cite[Corollary~4.3]{Kha15} for a family of metrics $g_{E_b}$, i.e. we can take constants $B_k$ and $C_k$ satisfying the following conditions: for any finite $X$-trajectory with respect to a metric $g_{E_b}$ induced from a pair $(x, y)$ of solution $x  \in  L^2_{k} ( i\Lambda^1_{E_b} )_{CC}  \oplus L^2_k ( S^+_{E_b} )$ for some $b \in B$ and a half trajectory $y : [0, \infty) \to \operatorname{Coul}_k(Y,\frakt)$ satisfying 
\[
\frac{\partial}{\partial t} y (t) = - (l +  c) (y(t))  \text{ and } y(0) =  r_{b}  x, 
\]
we have
\begin{itemize}
\item $\| x\|_{L^2_{k}} \leq B_k$ and 
\item for each $t \in [0,\infty)$, $\|y(t) \|_{L^2_{k-\frac{1}{2}}} \leq C_k$.
\end{itemize}
For the definition of finite $X$-trajectory, see \cite[Subsection 4.1]{Kha15}.
The existences of such constants follow since $B$ is compact.
Fix constants $R$ and $R'$  with $ R' >C_k$ and $R>B^0_k$ for a fixed $k$. 

We suppose that there exist a sequence $\{b_n\}_{n\in \Z_{>0} } \subset B$ and a sequence $\{x_{n}\}_{n\in \Z_{>0}}$ satisfying that
\[
x_n \in B(R,W_0^n|_{b_n}) \cap ( (\pr_{W_1^n }  \circ \mathcal{F}^{\mu_{n}})^{-1}    B(\epsilon_n, W_1^n|_{b_n}) )
\]
and that there exists a sequence of approximated half trajectories $y_n : [0,\infty) \to V_{\lambda_n}^{\mu_n} $ with 
\[
\frac{\partial}{\partial t} y_n (t) = - (l + p_\lambda^\mu c) (y(t)) , ~ y_n(0) =p^{\mu_n}_{-\infty}  r_{b_n}  x_n
\]
and 
\[
 \| y_n(t_n) \|_{V^{\mu_n}_{\lambda_n} } = R'. 
 \]
After taking a subsequence, we suppose $b_n \to b_\infty$ as $n\to \infty$.
We need the following lemma to get a contradiction: 
\begin{lem}\label{converges}
 Let $\{ x_n\}_{  n\in \Z_{>0}} $ be a bounded sequence in $L^2_{k} ( i\Lambda^1)_{CC}  \oplus L^2_k (S^+_{E} ) $ such that 
\[
L^{\mu_n}_{b_n} x_n \in    W_1^n ,  p^{\mu_n}_{-\infty} r_{b_n} x_n\in   V_{\lambda_n}^{\mu_n}  , \text{ and }(L^{\mu_n}_{b_n}  + p^{\mu_n}_{-\infty}  C_{b_n}  ) (x_n) \to 0 
\]
 in $L^2_k$-norm, where $b_n$ is the corresponding base point of $x_n$ in $B$.
  We also suppose that there exists a sequence of approximated half trajectories $y_n : [0,\infty) \to V_{\lambda_n}^{\mu_n} $ satisfying
\[
\frac{\partial}{\partial t} y_n (t) = - (l + p_\lambda^\mu c) (y(t))  \text{ and } y_n(0) =p^{\mu_n}_{-\infty} r_{b_n}  x_n. 
\]
Then, after taking a subsequence, the sequence $\{ b_n \}$ converges a point $b_\infty \in B$, the sequence $\{x_n\}$ converges to a solution $x_\infty$ to the Seiberg--Witten equations for $E_{b_\infty}$ and there exists a Seiberg--Witten half trajectory $y_\infty$ satisfying $\frac{\partial}{\partial t} y (t) = - (l +  c) (y(t)) $,  $y_\infty(0)=r_{b_\infty}  x_\infty$ and $y_n(t)$ converges $y_\infty(t)$ for all $t$ in $L^2_{k- \frac{1}{2}}$. 

\end{lem}
This is a family version of \cite[Lemma~4.4]{Kha15}, and we omit the proof since that is essentially the same as the proof of \cite[Lemma~4.4]{Kha15}. (Here we also use the compactness of $B$. ) 
By \cref{converges}, we have $x_\infty$ and $y_\infty$ satisfying the conclusion of \cref{converges}. 
After taking a subsequence of $\{t_n\}$, we have two cases: $t_n \to t_\infty \in [0, \infty)$, or $t_n \to \infty$. 
This implies that
\[
\|y_\infty(t_\infty)\|_{L^2_{k-\frac{1}{2}}} =R' \text{ or }\|y_\infty(\infty)\|_{L^2_{k-\frac{1}{2}}} =R'
\]
 holds. 
However,  this contradicts the choice of $R'>B_k$. 

Next, we prove the case (ii). 
We suppose that there exist a sequence $\{b_n\}_{n\in \Z_{>0} } \subset B$, a sequence 
\[
x_n \in  S(R,W_0^n) \cap ( (\pr_{W_1^n }  \circ \mathcal{F}^{\mu_{n}})^{-1}    B(\epsilon_n, W_1^n) ) 
\]
and a sequence of approximated half trajectories $y_n : [0,\infty) \to V_{\lambda_n}^{\mu_n} $ with 
\[
\frac{\partial}{\partial t} y_n (t) = - (l + p_\lambda^\mu c) (y(t)) , ~ y_n(0) =p^{\mu_n}_{-\infty}  r_{b_n}  x_n. 
\]
Again, we apply \cref{converges} and obtain $x_\infty$ and $y_\infty$ satisfying the conclusion of \cref{converges}. 
Note that we have 
\[
\| x_\infty \|_{L^2_k} = R. 
\]
This contradicts the choice of $R$.
\end{proof}

By using the above lemma and \cite[Theorem~4]{Ma03}, we may take an $S^1$-invariant Conley index $(N_n, L_n)$ such that 
\[
(K_{1 }, K_2 ) \subset (N_n, L_n) . 
\]
Then $\pr_{W_1^{n}}  \circ  \mathcal{F}^{\mu_{n}}|_{W_{0}^{n}}$ induces an $S^1$-equivariant continuous map
\begin{align} \label{simplify}
f_n :  (W_0^n(R))^{+_B}   \to  (W_1^n / (W_1^n \setminus B(\epsilon_n, W_1^n ))) \wedge_B \underline{I_{\lambda_n}^{\mu_n} }
 \end{align}
 as in \cite[Section~9]{Ma03},
 where $\wedge_B$ denotes the fiberwise smash product. 
We call this map \eqref{simplify} the {\it families relative Bauer--Furuta invariant}.

 The decomposition \eqref{decomp} implies that this map stably can be written so that 
 \begin{align*}
 \begin{split}
f :  \left(\Set{\ind D^{+}_{\widehat{A}_{b}}}_{b \in B}\right)^{+_{B}}
 \to \left(\underline{\C}^{n(Y , \frakt, g)} \oplus \underline{\R}^{b^+(X)} \right)^{+_{B}} \wedge_{B} \underline{SWF(Y, \frakt)},
  \end{split}
 \end{align*}
 where $\{\ind D^{+}_{\widehat{A}_{b}}\}_{b \in B}$ denotes the virtual index bundle.
  Arguing exactly as in \cite{Ma03}, one may see that this map gives rise to a topological invariant of a smooth bundle $E$ of $4$-manifolds with boundary equipped with a fiberwise spin$^{c}$ structure,
  but the invariance is not necessary for our purpose in this paper.
  
When $\mathfrak{s}$ is spin, respecting $\Pin(2)$-symmetry over the whole argument above, we obtain a $\Pin(2)$-equivariant map
 \begin{align*}
  \begin{split}
f :  \left(\Set{\ind D^{+}_{\widehat{A}_{b}}}_{b \in B}\right)^{+_{B}}
 \to \left(\underline{\quat}^{n(Y , \frakt, g)/2} \oplus \underline{\tilR}^{b^+(X)} \right)^{+_{B}} \wedge_{B} \underline{SWF(Y, \frakt)}
  \end{split}
 \end{align*}
as well.

\section{Proof of the main theorems}
\label{section: Proof of the main theorem}

In this \lcnamecref{section: Proof of the main theorem} we give the proofs of the main theorems, \cref{theo: main theo,theo: main theo2}.

\subsection{Properties of the families relative Bauer--Furuta invariant}
\label{subsection: Properties of the relative Bauer--Furuta invariant}

In this \lcnamecref{subsection: Properties of the relative Bauer--Furuta invariant} we summarize some properties of the relative families Bauer--Furuta invariant~\eqref{simplify} which are deduced from \cref{subsection: families relative Bauer--Furuta invariant}.
Henceforth we shall drop $n$ in \eqref{simplify} from our notation.
Recall that the families relative Bauer--Furuta invariant for the smooth family $(X,\fraks) \to E \to B$ is given as a fiberwise $S^{1}$-equivariant map between families of pointed $S^{1}$-spaces parametrized over $B$:
\begin{align}
\label{eq: starting point}
f : W_{0}^{+_{B}} 
\to W_{1}^{+_{B}} \wedge_{B} \underline{I_{\lambda}^{\mu}}.
\end{align}
Here 
\begin{itemize}
\item  $I_{\lambda}^{\mu}$ is the Conley index used to define the Seiberg--Witten Floer homotopy type of $Y$, where $\mu, -\lambda$ are taken to be sufficiently large.
Let $(N,L)$ be an index pair to define $I_{\lambda}^{\mu}$ given in \eqref{Conley index} so that
\[
I_{\lambda}^{\mu} = N/L. 
\]
\item $W_{0}, W_{1} \to B$ are vector bundles.
Each $W_{i}$ is the direct sum of a real vector bundle $W_{i}(\R)$ and a complex vector bundle $W_{i}(\C)$ over $B$:
\[
W_{i} = W_{i}(\R) \oplus W_{i}(\C).
\]
The $S^1$-actions on $W_{0}$ and $W_{1}$ are given as trivial and the action coming from the structure of a $\C$-vector space. 
\item The $S^{1}$-invariant part of the map \eqref{eq: starting point} is obtained as the restriction of a fiberwise $S^{1}$-equivariant linear map between vector bundles, denoted also by the same symbol $f^{S^{1}}$ by an abuse of notation:
\begin{align*}
f^{S^{1}} : W_{0}(\R) \to W_{1}(\R) \oplus \underline{V_{\lambda}^{\mu}(\R)} = W_{1}(\R) \times V_{\lambda}^{\mu}(\R).
\end{align*}
Let $p_{V_{\lambda}^{0}(\R)} : V_{\lambda}^{\mu}(\R) \to V_{\lambda}^{0}(\R)$ be the $L^{2}$-projection.
It follows from \cref{linear injection} that the map
\begin{align}
\label{eq: fix f 2}
(\id_{W_{1}(\R)} \oplus p_{V_{\lambda}^{0}(\R)}) \circ f^{S^{1}} : W_{0}(\R) \to W_{1}(\R) \oplus \underline{V_{\lambda}^{0}(\R)} = W_{1}(\R) \times V_{\lambda}^{0}(\R)
\end{align}
is a fiberwise linear injection and its fiberwise cokernel is isomorphic to the bundle $H^{+}(E) \to B$.
\item We have
\begin{align}
\begin{split}
\label{eq: dirac index formula}
\rank_{\C}W_{0}(\C) - \rank_{\C}W_{1}(\C) 
&= \ind_{\C}{D^{+}_{\hat{A}_{b}}} + \dim_{\C} V^{0}_{\lambda}(\C)\\
&= \frac{c_{1}(\fraks)^{2} - \sigma(X)}{8} + n(Y,\frakt,g)+ \dim_{\C} V^{0}_{\lambda}(\C).
\end{split}
\end{align}
Here $\{\hat{A}_{b}\}_{b \in B}$ denotes a family of $U(1)$-connections of the family of the determinant line bundles and $\{\ind{D^{+}_{\hat{A_{b}}}}\}_{b \in B}$ denotes the index of the families of the Dirac operators associated to $E$.
\end{itemize}

To prove \cref{theo: main theo}, we have to rewrite the $S^{1}$-fixed part $(I_{\lambda}^{\mu})^{S^{1}}$ into the sphere $V_{\lambda}^{0}(\R)^{+}$ without loss of information about the image of $f^{S^{1}}$.
It is summarized as the following \cref{lem: htpy NL}.
Let 
\begin{align*}
&p_{1} : W_{1} \times V_{\lambda}^{\mu} \to W_{1},\\
&p_{2} : W_{1} \times V_{\lambda}^{\mu} \to V_{\lambda}^{\mu}
\end{align*}
be the projections.

\begin{lem}
\label{lem: htpy NL}
There exists a homotopy equivalence
\[
\varphi : N^{S^{1}}/L^{S^{1}} \to V_{\lambda}^{0}(\R)^{+}
\]
for which the diagram 
\begin{align}
\begin{split}
\label{eq: htpy diagram Tira}
\xymatrix{
W_{0}(\R)^{+_{B}} \ar[r]^-{p_{2} \circ f^{S^{1}}} \ar[rd]_{p_{V_{\lambda}^{0}(\R)} \circ f^{S^{1}}\quad}
&N^{S^{1}}/L^{S^{1}} \ar[d]^-{\varphi} \\
&V_{\lambda}^{0}(\R)^{+}
}
\end{split}
\end{align}
commutes up to homotopy.
\end{lem}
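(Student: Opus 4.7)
The plan is to reduce to a linear hyperbolic flow on the $S^1$-fixed subspace. First, I would identify $(V_\lambda^\mu)^{S^1} = V_\lambda^\mu(\R)$ and observe that on the fixed locus the spinor component vanishes, so the quadratic term $c$ vanishes identically there; since the bump function $\rho$ equals $1$ on $B(2R)$, the restricted flow on $B(2R) \cap V_\lambda^\mu(\R)$ is the linear flow generated by $-{*d}$. Because $b_1(Y) = 0$, this operator has trivial kernel on $\Ker d^*$, so $\{0\}$ is an isolated hyperbolic fixed point whose unstable and stable subspaces are exactly $V_\lambda^0(\R)$ and $V_0^\mu(\R)$ respectively. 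Uniqueness of the Conley index then automatically gives a homotopy equivalence $N^{S^1}/L^{S^1} \simeq V_\lambda^0(\R)^+$; the remaining task is to pin down $\varphi$ so that diagram \eqref{eq: htpy diagram Tira} commutes up to homotopy.

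To make $\varphi$ explicit, I would introduce a model index pair adapted to the splitting $V_\lambda^\mu(\R) = V_\lambda^0(\R) \oplus V_0^\mu(\R)$, namely
\[
(N_0, L_0) := \bigl(\bar{B}(V_\lambda^0(\R)) \times \bar{B}(V_0^\mu(\R)),\, S(V_\lambda^0(\R)) \times \bar{B}(V_0^\mu(\R))\bigr),
\]
whose quotient collapses canonically to $V_\lambda^0(\R)^+$ via the projection $p_{V_\lambda^0(\R)}$, using contractibility of the factor $\bar{B}(V_0^\mu(\R))$. Denoting this collapse by $\varphi_0$ and the standard flow-induced Conley homotopy equivalence by $\psi : N^{S^1}/L^{S^1} \to N_0/L_0$, I would set $\varphi := \varphi_0 \circ \psi$.

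For the commutativity of \eqref{eq: htpy diagram Tira}, write $f^{S^1}(w) = (\alpha(w), \beta^-(w), \beta^+(w))$ in the decomposition $W_1(\R) \oplus V_\lambda^0(\R) \oplus V_0^\mu(\R)$. Whenever $p_2 \circ f^{S^1}(w)$ lies inside $N_0$, the composition $\varphi_0$ after quotienting returns on the nose $\beta^-(w) = p_{V_\lambda^0(\R)} \circ f^{S^1}(w)$; outside $N_0$ both sides collapse to the basepoint. The required homotopy will be the concatenation of (i) a radial rescaling to bring the image of $p_2 \circ f^{S^1}$ into a sufficiently large ball, which replaces $(N_0, L_0)$ by a rescaled pair of the same homotopy type, and (ii) the standard flow homotopy comparing $(N_0, L_0)$ with $(N^{S^1}, L^{S^1})$.

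The main obstacle will be step (ii): one must check that the flow-induced equivalence $\psi$ intertwines, up to homotopy, the two natural projections to $V_\lambda^0(\R)^+$. This reduces to the elementary facts that $V_\lambda^0(\R)$ is flow-invariant and that $p_{V_\lambda^0(\R)}$ commutes with the linear flow, together with the standard argument that the intermediate trajectories stay inside an isolating block. Everything else is a routine application of Conley index theory for a hyperbolic linear fixed point.
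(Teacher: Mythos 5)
Your proof is correct and follows essentially the same route as the paper's. Both proofs exploit the same model index pair $(N_0,L_0) = (D(V_\lambda^0(\R))\times D(V_0^\mu(\R)),\, S(V_\lambda^0(\R))\times D(V_0^\mu(\R)))$ associated to the hyperbolic splitting, define $\varphi$ as the composition of a flow-induced Conley-index equivalence with the obvious collapse, and reduce the commutativity of the triangle to a containment of the image of the restricted map in both index pairs together with the standard Conley-theoretic comparison (the paper invokes Khandhawit's Proposition~A.5 precisely for what you label the ``main obstacle,'' step~(ii)). The one point you treat a bit loosely, and the paper handles explicitly, is the pair of containments $(p_2\circ f(\tilde D),\, p_2\circ f(\tilde S)) \subset (N^{S^1},L^{S^1})$ and $\subset (N_0,L_0)$: the second inclusion uses that the fiberwise linear injection \eqref{eq: fix f 2} sends $\tilde S(W_0(\R))$ into the sphere factor $S(V_\lambda^0(\R))$, which is what makes your ``radial rescaling'' step actually work; this deserves a sentence rather than a hand-wave, but the argument is otherwise sound.
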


\begin{proof}
Since $p_{1} \circ f^{S^{1}} : W_{0}(\R) \to W_{1}(\R)$ is a fiberwise linear map,
\[
\tilde{D}(W_{0}(\R)) := D(W_{0}(\R)) \cap \tilde{K_{1}} = D(W_{0}(\R)) \cap (p_{1} \circ f^{S^{1}})^{-1}(B(\epsilon;W_{1}(\R)))
\]
 and 
 \[
\tilde{S}(W_{0}(\R)) := S(W_{0}(\R)) \cap \tilde{K_{1}} = S(W_{0}(\R)) \cap (p_{1} \circ f^{S^{1}})^{-1}(B(\epsilon;W_{1}(\R)))
\]
are a disk bundle and a sphere bundle of $W_{0}(\R)$ of some common radius respectively.
Here $\tilde{K}_{1}, \tilde{K}_{2}$ are defined in \eqref{eq: tilde K one} and \eqref{eq: tilde K two}.

Let us remark that we have
\begin{align}
\label{eq: inc pair1}
(p_{2} \circ f(\tilde{D}(W_{0}(\R))), p_{2} \circ f(\tilde{S}(W_{0}(\R))))
\subset (K_{1}^{S^{1}},K_{2}^{S^{1}}) \subset (N^{S^{1}},L^{S^{1}}).
\end{align}
On the other hand, since the map \eqref{eq: fix f 2} is a fiberwise linear injection, we have also that
\begin{align}
\begin{split}
\label{eq: inc pair2}
&(p_{2} \circ f(\tilde{D}(W_{0}(\R))), p_{2} \circ f(\tilde{S}(W_{0}(\R))))\\
\subset& (D(V_{\lambda}^{0}(\R)) \times D(V_{0}^{\mu}(\R)), S(V_{\lambda}^{0}(\R)) \times D(V_{0}^{\mu}(\R))),
\end{split}
\end{align}
where $D(\cdot)$ and $S(\cdot)$ are disks and spheres with appropriate radius respectively.
Moreover, it is easy to check that both of the right-hand sides of \eqref{eq: inc pair1} and \eqref{eq: inc pair2} are index pairs for the $S^{1}$-invariant part of the isolated invariant set $\mathrm{Inv}(B(2R; V_{\lambda}^{\mu}(\R)))$.

It follows from this combined with an argument used to prove Proposition~A.5 \cite{Kha15} by Khandhawit that there exists a homotopy equivalence
\[
\phi : N^{S^{1}}/L^{S^{1}} \to \frac{D(V_{\lambda}^{0}(\R)) \times D(V_{0}^{\mu}(\R))}{S(V_{\lambda}^{0}(\R)) \times D(V_{0}^{\mu}(\R))}
\]
which makes the diagram
\begin{align}
\begin{split}
\label{eq: htpy diagram1 Tira}
\xymatrix{
\tilde{D}(W_{0}(\R))/\tilde{S}(W_{0}(\R)) \ar[r]^-{p_{2} \circ f^{S^{1}}} \ar[rd]_{p_{2} \circ f^{S^{1}}}
&N^{S^{1}}/L^{S^{1}} \ar[d]^-{\phi} \\
&\frac{D(V_{\lambda}^{0}(\R)) \times D(V_{0}^{\mu}(\R))}{S(V_{\lambda}^{0}(\R)) \times D(V_{0}^{\mu}(\R))}
}
\end{split}
\end{align}
commutative up to homotopy, where $p_{2} \circ f^{S^{1}}: \tilde{D}(W_{0}(\R))/\tilde{S}(W_{0}(\R))\to N^{S^{1}}/L^{S^{1}}$ and $p_{2} \circ f^{S^{1}}: \tilde{D}(W_{0}(\R))/\tilde{S}(W_{0}(\R))\to \frac{D(V_{\lambda}^{0}(\R)) \times D(V_{0}^{\mu}(\R))}{S(V_{\lambda}^{0}(\R)) \times D(V_{0}^{\mu}(\R))}
$ are maps naturally induced by the same map $
p_{2} \circ f^{S^{1}} : W_0 (\R) \to V^\mu_\lambda(\R) $.
Note also an obvious commutative diagram
\begin{align}
\begin{split}
\label{eq: htpy diagram2 Tira}
\xymatrix{
\tilde{D}(W_{0}(\R))/\tilde{S}(W_{0}(\R)) \ar[r]^-{p_{2} \circ f^{S^{1}}} \ar[rd]_{p_{V_{\lambda}^{0}(\R)} \circ f^{S^{1}}\quad}
&\frac{D(V_{\lambda}^{0}(\R)) \times D(V_{0}^{\mu}(\R))}{S(V_{\lambda}^{0}(\R)) \times D(V_{0}^{\mu}(\R))} \ar[d]^-{\simeq} \\
&D(V_{\lambda}^{0}(\R))/S(V_{\lambda}^{0}(\R)).
}
\end{split}
\end{align}
Defining $\varphi$ as the composition of the vertical arrows in \eqref{eq: htpy diagram1 Tira} and \eqref{eq: htpy diagram2 Tira}, we obtain a homotopy commutative diagram \eqref{eq: htpy diagram Tira}.
\end{proof}

\subsection{Proof of \cref{theo: main theo}}
\label{subsection: Proof of first main theo}

Now we may start proving \cref{theo: main theo}.
Recall that all (co)homology are taken with $\F=\Z/2$-coefficients throughout this paper.

\begin{proof}[Proof of \cref{theo: main theo}]
Let us consider the following commutative diagram obtained by restricting the families relative Bauer--Furuta invariant onto the $S^{1}$-fixed-point sets:
\begin{align}
\begin{split}
\label{eq: diagram 0}
\xymatrix{
W_{0}^{+_{B}} \ar[r]^-{f}&W_{1}^{+_{B}} \wedge_{B} \underline{I_{\lambda}^{\mu}} \\
W_{0}(\R)^{+_{B}} \ar[u]^{i_{0}} \ar[r]^-{f^{S^{1}}} 
&W_{1}(\R)^{+_{B}} \wedge_{B} \underline{(I_{\lambda}^{\mu})^{S^{1}}} \ar[u]^{i_{1}}.
}
\end{split}
\end{align}
Here $i_{0}, i_{1}$ denote the inclusion maps.

The following lemma can be checked in a straightforward manner, and we omit the proof.

\begin{lem}
\label{leq: some quotient well def}
Let $W, W' \to B$ be vector bundles over $B$ and $I$ a pointed space. 
Denote by $\Th(W)$ the Thom space of $W$.
Then we have:
\begin{enumerate}
\item The identity map $W \times I \to W \times I$ induces a well-defined map
\[
W^{+_{B}} \wedge_{B} \underline{I} \to \Th(W) \wedge I.
\]
\item Assume that we have a fiberwise pointed map $\varphi : (W')^{+_{B}} \to W^{+_{B}} \wedge_{B} \underline{I}$.
Then $\varphi$ induces a well-defined map
\[
\Th(W') \to \Th(W) \wedge I.
\]
\item For a natural number $n$, the identity map $W \oplus \underline{\R^{n}} \to W \oplus \underline{\R^{n}}$ induces a well-defined homeomorphism
\[
\Th(W \oplus \underline{\R^{n}}) \to \Th(W) \wedge S^{n}.
\]
\end{enumerate}
\end{lem}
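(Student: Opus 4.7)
The three assertions are all bookkeeping about basepoint identifications in quotient constructions, so I sketch a uniform plan and flag where care is needed.

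For part (1), I would first recall that $W^{+_B}$ carries a canonical \emph{section at infinity} $\sigma_\infty : B \hookrightarrow W^{+_B}$ picking out the added basepoint in each fiber, and that by definition $\Th(W) = W^{+_B}/\sigma_\infty(B)$. The fiberwise smash $W^{+_B} \wedge_B \underline{I}$ is the quotient of $W^{+_B} \times_B \underline{I}$ by the fiberwise wedge
\[
\sigma_\infty(B) \times_B \underline{I} \; \cup \; W^{+_B} \times_B (B \times \{\ast\}),
\]
where $\ast \in I$ is the basepoint. I would define the desired map by $((w,b),(b,x)) \mapsto ([w], x) \in \Th(W) \wedge I$ and check that the identifications imposed in forming the fiberwise smash are compatible with those defining $\Th(W) \wedge I$: both reduce to collapsing the image of the section at infinity and of $W^{+_B} \times_B (B \times \{\ast\})$ to the common basepoint of $\Th(W) \wedge I$.

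For part (2), I would compose the given fiberwise pointed map $\varphi$ with the map from part (1) to obtain $(W')^{+_B} \to \Th(W) \wedge I$. Since $\varphi$ is fiberwise pointed, it carries the section at infinity $\sigma'_\infty(B) \subset (W')^{+_B}$ into the `basepoint section' of $W^{+_B} \wedge_B \underline{I}$, which is further collapsed to the basepoint of $\Th(W) \wedge I$ by part (1). Hence the composite descends along the quotient $(W')^{+_B} \to \Th(W')$ to a well-defined continuous map $\Th(W') \to \Th(W) \wedge I$.

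For part (3), I plan to invoke the standard Thom-space identity $\Th(E \oplus F) \cong \Th(E) \wedge_B \Th(F)$ for vector bundles $E, F \to B$, applied with $E = W$ and $F = \underline{\R^n}$, together with the identification $\Th(\underline{\R^n}) \cong B_+ \wedge S^n$ for the trivial rank-$n$ bundle. Tracking basepoints, the fiberwise smash $\Th(W) \wedge_B (B_+ \wedge S^n)$ collapses to $\Th(W) \wedge S^n$ because the $B_+$-factor is absorbed into the single basepoint already present in $\Th(W)$. Explicitly the homeomorphism reads $[(w,v)] \mapsto [w] \wedge v$ for $w \in W$ and $v \in \R^n$, with continuity and bijectivity immediate from the construction.

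No step here is a genuine obstacle. The only subtlety worth double-checking is the last one: one must verify that the natural quotient map $\Th(W) \wedge_B (B_+ \wedge S^n) \to \Th(W) \wedge S^n$ is not merely continuous but a homeomorphism, which follows because both spaces are Hausdorff and the map is a bijective proper quotient.
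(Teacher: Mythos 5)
The paper states this lemma and explicitly omits the proof, saying only that it ``can be checked in a straightforward manner,'' so there is no authorial argument to compare against; I can only evaluate correctness. Parts (1) and (2) of your proof are correct: you track the two families of identifications (section at infinity, basepoint of $I$) defining the fiberwise smash and show they are compatible with those defining $\Th(W)\wedge I$, and part (2) is indeed just the composite with (1) together with the observation that a fiberwise pointed map carries the section at infinity into the basepoint section.

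Part (3) reaches the right conclusion, and your explicit formula $[(w,v)]\mapsto[w]\wedge v$ is the correct map, but the intermediate step is notationally broken: the displayed identity $\Th(E\oplus F)\cong\Th(E)\wedge_B\Th(F)$ is not well-formed, since $\Th(E)$ and $\Th(F)$ are single pointed spaces, not bundles over $B$, so the fiberwise smash $\wedge_B$ does not apply to them. The fiberwise identity you want is $(E\oplus F)^{+_B}\cong E^{+_B}\wedge_B F^{+_B}$, coming from $(V\oplus U)^+\cong V^+\wedge U^+$ on each fiber; one then passes to Thom spaces by collapsing the section at infinity. Since $B$ is compact throughout the paper, the cleanest route is simply to note that $\Th(W)$ is the one-point compactification of the total space of $W$, so $\Th(W\oplus\underline{\R^n})=(W\times\R^n)^{+}\cong W^{+}\wedge(\R^n)^{+}=\Th(W)\wedge S^n$. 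Your closing remark is correct but slightly redundant: a bijective quotient map is automatically a homeomorphism, so the separate appeals to Hausdorffness and properness are not needed (alternatively, a continuous bijection from a compact space to a Hausdorff space suffices here).
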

From this, it follows that the commutative diagram \eqref{eq: diagram 0} induces the following commutative diagram:
\begin{align}
\label{eq: diagram 2}
\xymatrix{
\Th(W_{0}) \ar[r]^-{f}&\Th(W_{1}) \wedge I_{\lambda}^{\mu} \\
\Th(W_{0}(\R)) \ar[u]^{i_{0}}  \ar[r]^-{f^{S^{1}}} 
&\Th(W_{1}(\R)) \wedge (I_{\lambda}^{\mu})^{S^{1}}\ar[u]^{i_{1}}.
}
\end{align}
Applying the functor $\tilde{H}_{S^{1}}^{\ast}( \cdot ;\F)$, we obtain the commutative diagram
\begin{align}
\begin{split}
\label{eq: diagram 3}
\xymatrix{
\tilde{H}_{S^{1}}^{\ast}(\Th(W_{0})) \ar[d]_{i_{0}^{\ast}}
&\tilde{H}_{S^{1}}^{\ast}(\Th(W_{1}) \wedge I_{\lambda}^{\mu})\ar[l]_-{f^{\ast}} \ar[d]_{i_{1}^{\ast}} \\
\tilde{H}_{S^{1}}^{\ast}(\Th(W_{0}(\R)))
&\tilde{H}_{S^{1}}^{\ast}(\Th(W_{1}(\R)) \wedge (I_{\lambda}^{\mu})^{S^{1}}) \ar[l]_-{(f^{S^{1}})^{\ast}} .
}
\end{split}
\end{align}
We shall derive the divisibility of the Euler classes of some bundles using the diagram \eqref{eq: diagram 3}.
To do this in our situation, we will take a cohomology class 
\[
\eta \in \tilde{H}_{S^{1}}^{\ast}(\Th(W_{1}) \wedge I_{\lambda}^{\mu})
\]
as follows.
Henceforth, as an abbreviation, we write $d$ for $d(Y, \lambda, \mu, g, \frakt) \in \Z$.
Set 
\[
s = \dim V_{\lambda}^{0}(\R).
\]
By \cref{lem: expression of omega}, there exists a cohomology class 
\[
\omega \in \tilde{H}_{S^{1}}^{d}(I_{\lambda}^{\mu})
\]
satisfying the equality \eqref{eq: expression of iomega}.
Setting
\begin{align}
\label{eq: d prime}
d' = (d-s)/2,
\end{align}
we have
\begin{align}
\label{eq: expression of iomega2}
i^{\ast}\omega = [V^{0}_{\lambda}(\R)^{+}] \otimes U^{d'},
\end{align}
where i is the inclusion from $(I_{\lambda}^{\mu})^{S^1}$ to $I_{\lambda}^{\mu}$.

Here recall an elementary observation used in the K\"unneth formula for the reduced cohomology.
Let $X_{1}, X_{2}$ be based $S^{1}$-spaces and $p_{1} : (X_{1} \times X_{2}, \ast \times X_{2}) \to (X_{1}, \ast)$ and $p_{2} : (X_{1} \times X_{2}, X_{1} \times \ast) \to (X_{2}, \ast)$ be the projections.
For cohomology classes $\gamma_{i} \in \tilde{H}_{S^{1}}^{\ast}(X_{i}) \cong H_{S^{1}}^{\ast}(X_{i},\ast)$, the cohomology class $p_{1}^{\ast}\gamma_{1} \cup p_{2}^{\ast}\gamma_{2}$ can be thought of an element of
\[
H_{S^{1}}^{\ast}(X_{1} \times X_{2}, (X_{1} \times \ast) \cup (\ast \times X_{2}))
\cong \tilde{H}_{S^{1}}^{\ast}(X_{1} \wedge X_{2}).
\]

Now we go back to the diagram \eqref{eq: diagram 3} and apply the above observation to $\Th(W_{1}) \wedge I_{\lambda}^{\mu}$.
Let 
\[
p_{1} : (\Th(W_{1}) \times I_{\lambda}^{\mu}, \ast \times I_{\lambda}^{\mu}) \to (\Th(W_{1}), \ast)
\]
and 
\[
p_{2} : (\Th(W_{1}) \times I_{\lambda}^{\mu}, \Th(W_{1}) \times \ast) \to (I_{\lambda}^{\mu}, \ast)
\]
be the projections.
Then we obtain a cohomology class
\begin{align}
\label{eq: eta def}
\eta := p_{1}^{\ast}\tau_{S^{1}}(W_{1}) \cup p_{2}^{\ast}\omega \in \tilde{H}_{S^{1}}^{\ast}(\Th(W_{1}) \wedge I_{\lambda}^{\mu}).
\end{align}
We obtain
\begin{align}
\label{eq: main rel}
i_{0}^{\ast} f^{\ast} \eta
= (f^{S^{1}})^{\ast} i_{1}^{\ast} \eta
\end{align}
from the commutativity of the diagram \eqref{eq: diagram 3}.
Let us write down two sides of this relation \eqref{eq: main rel} in detail and extract a constraint on $H^{+}(E)$.

First, by \cref{Thom}, the equivariant Thom isomorphism with coefficients $\F$, there exists a cohomology class $\theta \in H_{S^{1}}^{\ast}(B)$ such that
\begin{align}
\label{eq: first pt for degree}
\tau_{S^{1}}(W_{0}) \cup \pi_{W_{0}}^{\ast}\theta = f^{\ast} \eta,
\end{align}
where $\pi_{W_{0}} : W_{0} \to B$ denotes the projection.
This cohomology class $\theta$ is an analog of the cohomological mapping degree of $f$ used to extract ordinary-cohomological information from the families Bauer--Furuta invariant of a family of closed $4$-manifolds.

Next, let us note the following elementary observation on Thom classes.
Let $W \oplus W' \to B$ be vector bundles decomposed into a direct sum.
Let $S^{1}$ act on a given vector bundle as the trivial action or the multiplication of complex numbers according to whether the bundle is a real or complex vector bundle.
Let $i : W \inc W\oplus W'$ be the inclusion.
A basic formula used below is
\begin{align}
\label{eq: general Thom res Euler}
i^{\ast}\tau_{S^{1}}(W \oplus W')
= \tau_{S^{1}}(W) \cup \pi_{W}^{\ast}e_{S^{1}}(W'),
\end{align}
which holds in $\tilde{H}_{S^{1}}^{\ast}(\Th(W))$.

By the previous paragraph, more precisely the formula \eqref{eq: general Thom res Euler}, we have 
\begin{align}
\label{eq: i zeroast}
i_{0}^{\ast}\tau_{S^{1}}(W_{0}) = \tau_{S^{1}}(W_{0}(\R)) \cup \pi_{W_{0}(\R)}^{\ast}e_{S^{1}}(W_{0}(\C)).
\end{align}
It follows from \eqref{eq: first pt for degree} and \eqref{eq: i zeroast} that
\begin{align}
\label{eq: izeroast fast}
i_{0}^{\ast} f^{\ast} \eta = \tau_{S^{1}}(W_{0}(\R)) \cup \pi_{W_{0}(\R)}^{\ast}(e_{S^{1}}(W_{0}(\C)) \cup \theta).
\end{align}

Next, we calculate the right-hand side of \eqref{eq: main rel}.
By abuse of notation we denote also by $p_{1}, p_{2}$ the projections
\begin{align*}
&p_{1} : (\Th(W_{1}(\R)) \times (I_{\lambda}^{\mu})^{S^{1}}, \ast \times (I_{\lambda}^{\mu})^{S^{1}}) \to (\Th(W_{1}(\R)), \ast),\\
&p_{2} : (\Th(W_{1}(\R)) \times (I_{\lambda}^{\mu})^{S^{1}}, \Th(W_{1}(\R)) \times \ast) \to ((I_{\lambda}^{\mu})^{S^{1}}, \ast)
\end{align*}
respectively.
Let 
\begin{align*}
&\iota_{1} : \Th(W_{1}(\R)) \inc \Th(W_{1})\\
\end{align*}
be the inclusion.
Then, by \eqref{eq: expression of iomega2} and \eqref{eq: general Thom res Euler}, we have that
\begin{align}
\begin{split}
\label{eq: ioneast}
i_{1}^{\ast} \eta
=& p_{1}^{\ast}\iota_{1}^{\ast}\tau_{S^{1}}(W_{1}) \cup p_{2}^{\ast} i^{\ast}\omega \\
=& p_{1}^{\ast}\tau_{S^{1}}(W_{1}(\R)) \cup p_{1}^{\ast}\pi_{W_{1}(\R)}^{\ast}e_{S^{1}}(W_{1}(\C)) \cup p_{2}^{\ast}([V_{\lambda}^{0}(\R)^{+}] \otimes U^{d'})
\end{split}
\end{align}
in $\tilde{H}_{S^{1}}^{\ast}(\Th(W_{1}(\R)) \wedge (I_{\lambda}^{\mu})^{S^{1}})$.
Let 
\begin{align*}
\Phi : \tilde{H}_{S^{1}}^{\ast}(\Th(W_{1}(\R)) \wedge V_{\lambda}^{0}(\R)^{+}) \to \tilde{H}_{S^{1}}^{\ast}(\Th(W_{1}(\R)) \wedge (I_{\lambda}^{\mu})^{S^{1}}).
\end{align*}
be the isomorphism induced from the homotopy equivalence $\varphi : N^{S^{1}}/L^{S^{1}} \to V_{\lambda}^{0}(\R)^{+}$ obtained in \cref{lem: htpy NL},
where we identify $(I_{\lambda}^{\mu})^{S^{1}}$ with $N^{S^{1}}/L^{S^{1}}$ using an obvious homeomorphism.
\Cref{lem: htpy NL} implies that we have the commutative diagram
\begin{align}
\begin{split}
\label{eq: coh diagram1}
\xymatrix{
\tilde{H}_{S^{1}}^{\ast}(\Th(W_{0}(\R)))
&\tilde{H}_{S^{1}}^{\ast}(\Th(W_{1}(\R)) \wedge (I_{\lambda}^{\mu})^{S^{1}}) \ar[l]_-{(f^{S^{1}})^{*}}\\
&\tilde{H}_{S^{1}}^{\ast}(\Th(W_{1}(\R)) \wedge V_{\lambda}^{0}(\R)^{+}) \ar[ul]^{((\id_{W_{1}(\R)} \oplus p_{V_{\lambda}^{0}(\R)}) \circ f^{S^{1}})^{*}\quad\qquad} \ar[u]^-{\Phi}.
}
\end{split}
\end{align}
Note that we have an isomorphism
\[
\Psi : \tilde{H}_{S^{1}}^{\ast}(\Th(W_{1}(\R))\wedge V_{\lambda}^{0}(\R)^{+})
\to \tilde{H}_{S^{1}}^{\ast}(\Th(W_{1}(\R) \oplus \underline{V_{\lambda}^{0}(\R)})).
\]
induced from a natural homeomorphism in  \cref{leq: some quotient well def}~(3).
Via $\Psi$, we identify the domain and codomain of $\Psi$.
It follows from \eqref{eq: ioneast} that
\begin{align}
\label{eq: expression tempo}
\Phi^{-1} \circ i_{1}^{\ast}\eta=
\tau_{S^{1}}(W_{1}(\R) \oplus \underline{V_{\lambda}^{0}(\R)}) \cup \pi_{W_{1}(\R) \oplus \underline{V_{\lambda}^{0}(\R)}}^{\ast}(e_{S^{1}}(W_{1}(\C)) \cdot U^{d'}),
\end{align}
where $\cdot U^{d'}$ denotes the action of $U^{d'} \in \tilde{H}_{S^{1}}^{\ast}(S^{0}) \cong \F[U]$ on ${H}_{S^{1}}^{\ast}(B)$.
Recall that $f^{S^{1}}$ is obtained as the restriction of a fiberwise linear map \eqref{eq: fix f 2}.
Moreover, the map \eqref{eq: fix f 2}, which induces the map 
\[
((\id_{W_{1}(\R)} \oplus p_{V_{\lambda}^{0}(\R)}) \circ f^{S^{1}})^{*} : \tilde{H}_{S^{1}}^{\ast}(\Th(W_{1}(\R)) \wedge V_{\lambda}^{0}(\R)^{+})
\to \tilde{H}_{S^{1}}^{\ast}(\Th(W_{0}(\R)))
\]
in the diagram \eqref{eq: coh diagram1}, is a fiberwise linear injection and its fiberwise cokernel is isomorphic to $H^{+}(E)$.
It follows from this combined with \eqref{eq: general Thom res Euler}, \eqref{eq: coh diagram1}, and \eqref{eq: expression tempo} that
\begin{align}
\begin{split}
\label{eq: f fix ast i one ast}
(f^{S^{1}})^{\ast} i_{1}^{\ast} \eta
&= ((\id_{W_{1}(\R)} \oplus p_{V_{\lambda}^{0}(\R)}) \circ f^{S^{1}})^{\ast}\circ \Phi^{-1} \circ i_{1}^{\ast} \eta\\
&= \tau_{S^{1}}(W_{0}(\R)) \cup \pi_{W_{0}(\R)}^{\ast} (e_{S^{1}}(H^{+}(E)) \cup e_{S^{1}}(W_{1}(\C)) \cdot U^{d'}).
\end{split}
\end{align}

Since the Thom class $\tau_{S^{1}}(W_{0}(\R)) \in \tilde{H}_{S^{1}}^{\ast}(\Th(W_{0}(\R)))$ is a generator of $\tilde{H}_{S^{1}}^{\ast}(\Th(W_{0}(\R)))$ as an $H_{S^{1}}^{*}(B)$-module, it follows from \eqref{eq: main rel}, \eqref{eq: izeroast fast}, and \eqref{eq: f fix ast i one ast} that
\begin{align}
\label{eq: main divisibility}
e_{S^{1}}(W_{0}(\C)) \cup \theta
= e_{S^{1}}(H^{+}(E)) \cup e_{S^{1}}(W_{1}(\C)) \cdot U^{d'}.
\end{align}
This is an equality in $H_{S^{1}}^{\ast}(B)$,
and is the desired divisibility of Euler classes.

Set $m := \rank_{\C}W_{0}(\C)$ and $n:= \rank_{\C}W_{1}(\C)$.
Recall that the $S^{1}$-action on $W_{i}(\C)$ is given by the scalar multiplication.
Then the equivariant Euler class is written in terms of (non-equivariant) Chern classes, which is actually one of ways to define the Chern classes:
\begin{align*}
&e_{S^{1}}(W_{0}(\C)) = \sum_{i=0}^{m} c_{m-i}(W_{0}(\C)) \otimes U^{i},\\
&e_{S^{1}}(W_{1}(\C)) = \sum_{j=0}^{n} c_{n-j}(W_{1}(\C)) \otimes U^{j}
\end{align*}
in $H_{S^{1}}^{\ast}(B;\Z) \cong H^{\ast}(B;\Z) \otimes H_{S^{1}}^{\ast}(\pt;\Z)$.
Taking mod $2$, we obtain similar equalities involving Stiefel--Whitney classes in $H_{S^{1}}^{\ast}(B) = H_{S^{1}}^{\ast}(B;\F)$.
It follows from this combined with \eqref{eq: main divisibility} that 
\begin{align}
\label{eq: div substi}
\left(\sum_{i=0}^{m} w_{2(m-i)}(W_{0}(\C)) \otimes U^{i} \right) \cup \theta
= e_{S^{1}}(H^{+}(E)) \cup \left(\sum_{j=0}^{n} w_{2(n-j)}(W_{1}(\C)) \otimes U^{j} \right) \cup U^{d'}.
\end{align}
Here note that we have $e_{S^{1}}(H^{+}(E)) = w_{b^{+}}(H^{+}(E)) \otimes 1 \in H^{*}(B) \otimes H_{S^{1}}^{\ast}(\pt)$ since the action of $S^{1}$ on $H^{+}(E)$ is trivial.
Now let us use the assumption that $w_{b^{+}}(H^{+}(E)) \neq 0$.
Setting $k := \deg_{U}\theta \geq 0$ and comparing the $U$-degree highest terms in the equality \eqref{eq: div substi}, we obtain
\[
\theta_{0} \cdot U^{m+k} = w_{b^{+}}(H^{+}(E)) \cdot U^{n+d'},
\]
where $\theta_{0} \in H^{*}(B)$ is a non-zero cohomology class. 
Thus we have that
$m+k =n+d'$, and hence $m \leq n+d'$.
By \eqref{eq: dirac index formula}, this inequality is equivalent to
\[
\frac{c_{1}(\fraks)^{2} - \sigma(X)}{8} + n(Y,\frakt,g)+ \dim_{\C} V^{0}_{\lambda}(\C) \leq d'.
\]
From the definition of the Fr{\o}yshov invariant and the definition of $d'$, which are \eqref{def Fro h d} and \eqref{eq: d prime} respectively, this is equivalent to the desired inequality \eqref{eq: main ineq in main thm}.
This completes the proof of \cref{theo: main theo}.
\end{proof}

\begin{rem}
\label{rem: why not local system}
Baraglia~\cite{Ba19} used local coefficient systems with fiber $\Z$ to derive his constraint \cite[Theorem~1.1]{Ba19}.
As a result, he obtained a constraint described in terms of the Euler class of $H^{+}(E)$ living in a certain cohomology with local coefficient, not $w_{b^{+}}(H^{+}(E))$.
\Cref{theo: main theo} is an analog of the mod $2$ version of his constraint.
Here we explain the reason why we cannot use such local coefficients and use $\F$-coefficients instead in this paper.
Given an $S^{1}$-vector bundle $W \to B$, 
to use the (equivariant) Thom isomorphism for $W$ with a certain local coefficient induced from a local system on the base space, we need to consider the relative cohomology $H^{\ast}_{S^{1}}(D(W), S(W))$, rather than $\tilde{H}_{S^{1}}^{\ast}(\Th(W))$.
This is just because there is no obvious way to define a local system on $\Th(W)$ induced from a local system on the base space $B$.
To use relative cohomologies, we need to have a map between pairs
\begin{align*}
f : (D(W_{0}), S(W_{0}))
\to (W_{1}, W_{1}\setminus\{0\}) \times (N, L)
\end{align*}
instead of \eqref{eq: starting point}.
But we could not figure out whether we can obtain such a map as the families relative Bauer--Furuta invariant, because it seems essential to cut the domain of $f$ by the compact sets $\tilde{K}_{1}, \tilde{K}_{2}$ in \cref{lem: ensure the existence of Conley index} to obtain an appropriate index pair $(N,L)$.
\end{rem}

\subsection{Proof of \cref{theo: main theo2}}
\label{subsection: Proof of first main theo2}

The proof of \cref{theo: main theo2} is quite similar to the proof of \cref{theo: main theo}.
Here let us summarize major difference of the settings:
\begin{itemize}
\item The families $Pin(2)$-equivariant relative Bauer--Furuta invariant for the smooth family $(X,\fraks) \to E \to B$ is given as a fiberwise $Pin(2)$-equivariant map between families of pointed $Pin(2)$-spaces parametrized over $B$:
\begin{align}
\label{eq: starting point}
f : W_{0}^{+_{B}} 
\to W_{1}^{+_{B}} \wedge_{B} \underline{I_{\lambda}^{\mu}}.
\end{align}
\item The vector bundles $W_{0}, W_{1}$ are the direct sums of real vector bundles $W_{i}(\tilR)$ and quaternionic vector bundle $W_{i}(\quat)$ over $B$.
Here $\Pin(2)$ acts on $W_{i}(\tilR)$ as the $\pm1$-multiplication and on $W_{i}(\quat)$ as the scalar multiplication of quaternions. The pointed space $\underline{I_{\lambda}^{\mu}}$ is a $Pin(2)$-equivariant Conley index.
\item We have
\begin{align}
\label{eq: dirac index formula 2}
\rank_{\quat}W_{0}(\quat) - \rank_{\quat}W_{1}(\quat) 
= \frac{- \sigma(X)}{16} + \frac{n(Y,\frakt,g)}{2}+ \dim_{\quat} V^{0}_{\lambda}(\quat).
\end{align}
\item $\Pin(2)$-equivariant cohomology and $\Pin(2)$-equivariant Thom and Euler classes are used, instead of $S^{1}$-equivariant cohomology, Thom classes, and Euler classes.
\end{itemize}

\begin{proof}[Proof of \cref{theo: main theo2}]
Considering the restriction of a finite-dimensional approximation $f$ to the $S^{1}$-fixed part,
we have a commutative diagram
\begin{align}
\begin{split}
\label{eq: diagram 3}
\xymatrix{
\tilde{H}_{\Pin(2)}^{\ast}(\Th(W_{0})) \ar[d]_{i_{0}^{\ast}}
&\tilde{H}_{\Pin(2)}^{\ast}(\Th(W_{1}) \wedge I_{\lambda}^{\mu})\ar[l]_-{f^{\ast}} \ar[d]_{i_{1}^{\ast}} \\
\tilde{H}_{\Pin(2)}^{\ast}(\Th(W_{0}(\tilR)))
&\tilde{H}_{\Pin(2)}^{\ast}(\Th(W_{1}(\tilR)) \wedge (I_{\lambda}^{\mu})^{S^{1}}) \ar[l]_-{(f^{S^{1}})^{\ast}} .
}
\end{split}
\end{align}
Let $\omega$ be one of
\begin{align*}
&\omega_{a} \in \tilde{H}_{S^{1}}^{a}(I_{\lambda}^{\mu}),\\
&\omega_{b} \in \tilde{H}_{S^{1}}^{b+1}(I_{\lambda}^{\mu}),\\
&\omega_{c} \in \tilde{H}_{S^{1}}^{c+2}(I_{\lambda}^{\mu})
\end{align*}
in \cref{lem: expression of omega spin},
and define $\eta \in \tilde{H}_{S^{1}}^{\ast}(\Th(W_{1}) \wedge I_{\lambda}^{\mu})$ using this $\omega$ as well as \eqref{eq: eta def}.
Repeating the proof of \cref{theo: main theo} using the diagram \eqref{eq: diagram 3} and this $\eta$, we obtain
\begin{align}
&e_{\Pin(2)}(W_{0}(\quat)) \cup \theta
= e_{\Pin(2)}(H^{+}(E)) \cup e_{\Pin(2)}(W_{1}(\quat)) \cdot v^{(a-s)/4},\label{eq: main divisibility spin1}\\
&e_{\Pin(2)}(W_{0}(\quat)) \cup \theta
= e_{\Pin(2)}(H^{+}(E)) \cup e_{\Pin(2)}(W_{1}(\quat)) \cdot qv^{(b-s)/4},\label{eq: main divisibility spin2}\\
&e_{\Pin(2)}(W_{0}(\quat)) \cup \theta
= e_{\Pin(2)}(H^{+}(E)) \cup e_{\Pin(2)}(W_{1}(\quat)) \cdot q^{2}v^{(c-s)/4},\label{eq: main divisibility spin3}
\end{align}
according to the choice of $\omega$, as well as \eqref{eq: main divisibility}.
Here $\theta$ is an element of $H_{\Pin(2)}^{\ast}(B)$.

By an argument by Baraglia in the proof of \cite[Theorem~5.1]{Ba19}, we have that
\begin{align}
&e_{\Pin(2)}(W_{0}(\quat)) = \sum_{i=0}^{m} c_{2m-2i}(W_{0}(\quat)) \otimes v^{i}, \label{eq quat euler1}\\
&e_{\Pin(2)}(W_{1}(\quat)) = \sum_{j=0}^{n} c_{2n-2j}(W_{1}(\quat)) \otimes v^{j} \label{eq quat euler2}
\end{align}
in $H_{\Pin(2)}^{\ast}(B) \cong H^{\ast}(B) \otimes H_{\Pin(2)}^{\ast}(\pt)$, where $m = \rank_{\quat} W_{0}(\quat)$ and $n = \rank_{\quat} W_{1}(\quat)$.
Moreover, an argument by Baraglia in the proof of \cite[Corollary~5.2]{Ba19}, we have that
\begin{align}
\begin{split}
\label{eq quat euler H plus}
e_{\Pin(2)}(H^{+}(E)) 
&= w_{b^{+},\Pin(2)}(H^{+}(E))\\
&= w_{b^{+}}(H^{+}(E)) \otimes 1 + w_{b^{+}-1}(H^{+}(E)) \otimes q + w_{b^{+}-2}(H^{+}(E)) \otimes q^{2}
\end{split}
\end{align}
in $H_{\Pin(2)}^{\ast}(B)$.

Now we argue according to the non-vanishing of $w_{\bullet}(H^{+}(E))$ for $\bullet=b^{+}, b^{+}-1, b^{+}-2$.
First let us assume that $w_{b^{+}}(H^{+}(E)) \neq 0$.
In this case, we take $\omega_{c}$ as $\omega$.
Let us substitute \eqref{eq quat euler1}, \eqref{eq quat euler2} and \eqref{eq quat euler H plus} for various Euler classes in \eqref{eq: main divisibility spin3}.
Then one may see that the right-hand side of \eqref{eq: main divisibility spin3} contains the term $w_{b^{+}}(H^{+}(E)) \otimes q^{2}v^{(c-s)/4}$, which is the $v$-degree  highest non-zero term of the form $x \otimes q^{2}v^{k}$, where $x \in H^{\ast}(B)$ and $k \geq 0$.
Therefore the left-hand side of \eqref{eq: main divisibility spin3} should also contain a non-zero term of the form $x \otimes q^{2}v^{k}$.
This is equivalent to the existence of a non-zero term of the form $x \otimes q^{2}v^{k}$ in $\theta$.
Let $k_{c} \geq 0$ be the maximum of such $k$.
Then it follows that
\[
\theta_{0} \otimes q^{2}v^{m+k_{c}} = w_{b^{+}}(H^{+}(E)) \otimes q^{2}v^{n+(c-s)/4},
\]
where $0 \neq \theta_{0} \in H^{*}(B)$.
Thus we have $m-n \leq (c-s)/4$.
This combined with the definition of $\gamma$, given in \eqref{eq: def gamma}, implies the inequality \eqref{eq: main ineq in main thm2}.

Next let us assume that $b^{+}(X)>0$ and $w_{b^{+}-1}(H^{+}(E)) \neq 0$.
In this case, we take $\omega_{b}$ as $\omega$.
After substituting \eqref{eq quat euler1}, \eqref{eq quat euler2} and \eqref{eq quat euler H plus} for the Euler classes in \eqref{eq: main divisibility spin2},
the right-hand side of \eqref{eq: main divisibility spin2} contains the term $w_{b^{+}-1}(H^{+}(E)) \otimes q^{2}v^{(b-s)/4}$, which is the $v$-degree  highest non-zero term of the form $x \otimes q^{2}v^{k}$.
Arguing exactly as in the above paragraph, we obtain $m-n \leq (b-s)/4$, which implies the inequality \eqref{eq: main ineq in main thm3}.

Similarly, the inequality \eqref{eq: main ineq in main thm4} is deduced from the assumption that $b^{+}(X)>1$ and $w_{b^{+}-2}(H^{+}(E)) \neq 0$ by taking $\omega_{a}$ as $\omega$. 
This completes the proof of \cref{theo: main theo2}.
\end{proof}

\begin{rem}
A reader may wonder whether $\Pin(2)$-equivariant $K$-theory can be used to extract a constraint
of smooth families of spin $4$-manifolds with boundary.
We predict that it should be able to be established as a general statement using Manolescu's invariant $\kappa$ introduced in \cite{Ma14} instead of $\alpha,\beta,\gamma$.
The reason why we do not include such a study in this paper is that we could not find a potential application like \cref{cor: rel Diff Homeo,cor: Diff Homeo absolute} detected using a $K$-theoretic constraint.
\Cref{cor: rel Diff Homeo,cor: Diff Homeo absolute} follows from the existence of non-smoothable families (\cref{theo: main app}), but non-smoothability of families of that kind cannot be detected using a $K$-theoretic constraint.
For the examples of non-smoothable families $E$ given in \cref{subsection: proof of main app}, the associated bundles $H^{+}(E)$ do not admit $K$-theory orientation, and the $K$-theoretic Euler class cannot make sense for them.
(One way to get $K$-orientability is tensoring with $\C$, but $H^{+}(E) \otimes \C$ are trivial in those examples, and we cannot extract any information.)
\end{rem}

\section{Applications}
\label{section: Applications}

In this \lcnamecref{section: Applications} we consider applications of \cref{theo: main theo,theo: main theo2} mainly to the existence of non-smoothable families of 4-manifolds with boundary, stated as \cref{theo: main app}.
We also describe consequences of the the existence of non-smoothable families about comparison between various diffeomorphism groups and homeomorphism groups of 4-manifolds with boundary in this \lcnamecref{section: Applications}.

\subsection{Topological spin and spin$^{c}$ structure}
\label{subsection: Topological spinc structure}

To apply our main theorems, \cref{theo: main theo,theo: main theo2}, to concrete families of $4$-manifolds, we need to lift structure group from the homeomorphism group to the automorphism group of a topological spin or spin$^{c}$ structure.
This problem has already appeared also in the study of families of closed $4$-manifolds \cite{BK19,KKN19,Ba19,KN20}.
Although there is no major difference between closed manifolds and manifolds with boundary on this problem,
for readers' convenience, 
we recall the notion of a topological spin or spin$^{c}$ structure and a sufficient condition for the above lifting problem.
We mainly refer to \cite[Subsection~4.2]{BK19} for the detail.

First recall the definition of a microbundle.
Let $B$ be a topological space.
A {\it microbundle} $\xi$ over $B$ with fiber dimension $n \geq 0$ consists of data $(E,B,i,p)$, symbolically denoted by
\[
\xi = \{ B \xrightarrow{i} E \xrightarrow{p} B \},
\]
where $E$ is a topological space, $i,p$ are continuous maps satisfying that $p \circ i = \id_{B}$, and $E$ is supposed to be locally trivial around the image of $i$.
Namely, for each $b \in B$, there exist an open neighborhood $O$ of $b$ in $B$, an open neighborhood $U$ of $i(b)$ of $E$, and an homeomorphism $\phi : U \to O \times \R^{n}$ such that $p|_{U} = p_{2} \circ \phi$, where $p_{2} : O \times \R^{n} \to \R^{n}$ is the projection.

The following Kister's theorem~\cite{Ki64} (and its extension to a paracompact base space by Holm~\cite{Ho67}) are fundamental in the study of microbundles.
This states that one can find a fiber bundle which represents a given microbundle, and such representatives are unique up to isomorphism:

\begin{theo}[\cite{Ki64,Ho67}]
\label{theo: Holm}
If $B$ is paracompact, any microbundle $\xi=\{ B \xrightarrow{i} E \xrightarrow{p} B \}$ over $B$ is represented by a fiber bundle which is unique up to isomorphism.
Namely, if we denote by $n$ the fiber dimension of $\xi$, there exists an open neighborhood $U \subset E$ of the image of $i$ such that $p|_{U} : U \to B$ is a fiber bundle over $B$ with fiber $\R^{n}$, and such a fiber bundle is unique up to isomorphism.
\end{theo}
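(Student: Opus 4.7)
The plan is to carry out the classical argument of Kister, whose content is that the local embedding data naturally supplied by the microbundle axiom can be upgraded to genuine fiber bundle transition data. Near any $b \in B$ a local trivialization of $\xi$ provides an open neighborhood $U_\alpha$ of $i(b)$ in $E$ together with a homeomorphism $U_\alpha \to O_\alpha \times \R^n$ compatible with $i$ and $p$; on overlaps one thus obtains transition maps valued in the space $\mathrm{Emb}_0(\R^n)$ of continuous embeddings $\R^n \to \R^n$ fixing the origin (endowed with the compact-open topology). To produce an honest fiber bundle we must replace these with maps valued in the subgroup $\Homeo_0(\R^n)$ of origin-preserving self-homeomorphisms.

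The key technical input is therefore the assertion that the inclusion $\Homeo_0(\R^n) \hookrightarrow \mathrm{Emb}_0(\R^n)$ is a homotopy equivalence, proved by an explicit radial-rescaling deformation: given $f \in \mathrm{Emb}_0(\R^n)$, one constructs, continuously in $f$, a family of rescalings of $f$ that expand its image to cover all of $\R^n$, terminating in a homeomorphism. I expect this to be the main obstacle: the continuity of the construction in the compact-open topology requires a simultaneous choice of scaling functions controlled on every compact set, and is the technical core of Kister's paper.

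Granting this deformation, the existence part would proceed by choosing, via paracompactness of $B$, a locally finite open cover $\{O_\alpha\}$ of $B$ by trivializing patches of $\xi$ together with a subordinate partition of unity. Then, working over an inductive exhaustion of the cover, I would shrink the local neighborhoods $U_\alpha \subset E$ and apply a parameterized version of the Kister deformation so that the modified transition functions satisfy the cocycle condition exactly and take values in $\Homeo_0(\R^n)$. The union of the resulting patches gives the desired open neighborhood $U \subset E$ of the image of $i$, and the restriction $p|_U : U \to B$ is then a fiber bundle with fiber $\R^n$.

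For uniqueness, given two fiber bundle representatives $U_0, U_1 \subset E$, the strategy is to apply the same construction to the pullback microbundle over $B \times [0,1]$, using a straight-line interpolation between the germs of $U_0$ and $U_1$ along $i(B) \times [0,1]$; the resulting fiber bundle over $B \times [0,1]$ supplies, by restriction to the two endpoints and concordance, an isomorphism $U_0 \cong U_1$ covering the identity of $B$. Holm's extension to an arbitrary paracompact base amounts to checking that partition-of-unity methods suffice to patch the Kister rescaling deformations consistently, which is standard once the one-chart case is in hand.
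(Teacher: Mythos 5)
The paper does not prove this theorem; it is stated purely as a cited result of Kister~\cite{Ki64} (for nice bases) and Holm~\cite{Ho67} (for general paracompact bases), so there is no in-paper argument to compare against. Your outline does correctly capture the shape of the cited proofs: the heart of Kister's argument is a controlled radial deformation turning origin-fixing embeddings $\R^n\to\R^n$ into homeomorphisms, and Holm's contribution is the extension of the patching machinery to arbitrary paracompact bases via partitions of unity; the uniqueness is obtained by a concordance argument over $B\times[0,1]$, essentially as you say.

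Two points are imprecise and would need tightening if you wrote this out. First, the key technical statement is not merely that the inclusion $\Homeo_0(\R^n)\inc \mathrm{Emb}_0(\R^n)$ is a homotopy equivalence; what the inductive cocycle-correction step actually needs is a parameterized deformation retraction which is natural enough to be applied over a locally finite cover and relative to already-corrected data (i.e.\ a deformation that can be performed rel a prescribed compact set or half-space). A bare homotopy equivalence, used only through classifying spaces, would not obviously let you modify the given charts in place inside $E$, which is what the theorem demands. Second, the ``transition maps'' of a microbundle are not genuine self-embeddings of $\R^n$ but germs near the zero section, defined only on some open neighborhood of $O_\alpha\cap O_\beta$ in $(O_\alpha\cap O_\beta)\times\R^n$; before any deformation can be applied one must shrink the local charts so that the overlaps are honest embeddings of all of $\R^n$, and arranging this consistently across a locally finite cover is precisely where paracompactness (and Holm's refinement) enters. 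With those two points made explicit, your sketch matches the references the paper is relying on.
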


For $n \geq 1$, denote by $Top(n)$ the group of homeomorphisms of $\R^{n}$ preserving the origin and denote by $STop(n)$ the subgroup of $Top(n)$ consisting of homeomorphisms preserving the orientation of $\R^{n}$.
Given a topological $n$-manifold $X$ without boundary, the {\it tangent microbundle}
\[
\tau X = \{ X \xrightarrow{\Delta} X \times X \xrightarrow{p_{1}} X\}
\]
is associated, where $\Delta$ is the diagonal map and $p_{2}$ is the projection to the first factor.
By \cref{theo: Holm}, a principal $Top(n)$-bundle over $X$ is associated to $\tau X$, and this fiber bundle is unique up to isomorphism.
Henceforth we use the notation $\tau X$ also to indicate the associated fiber bundle and say that $\tau X$ has structure group $Top(n)$ if there is no risk of confusion.
If $X$ is oriented, we obtain the oriented tangent microbundle, where the associated fiber bundle has structure group $STop(n)$.

Similarly, if we have a topological fiber bundle $X \to E \to B$ which has structure group $\Homeo(X)$, the {\it vertical tangent microbundle} $\tau(E/B)$ is associated:
\[
\tau(E/B) = \{ E \xrightarrow{\Delta} E \times_{B} E \xrightarrow{p_{1}} E\}.
\]
A principal $Top(n)$-bundle over $E$ is associated to $\tau(E/B)$ by \cref{theo: Holm}.
If the structure group of $X \to E \to B$ reduces to $\Homeo^{+}(X)$, we have the oriented vertical tangent microbundle $\tau(E/B)$.

Let us assume $n \geq 2$.
Since $STop(n)$ is connected and the natural map $GL^{+}_{n}(\R) \inc STop(n)$ is known to induce an isomorphism of fundamental groups, we obtain a unique connected double covering of $STop(n)$, which we denote by $SpinTop(n)$.
If the oriented tangent microbundle $\tau X$ admits a lift to a principal $SpinTop(n)$-bundle along the covering $SpinTop(n) \to STop(n)$,
we call such a list a {\it (topological) spin structure} of $X$.
For a smooth manifold, usual (i.e. smooth) spin structures are naturally in a bijective correspondence with topological spin structures.
A fiberwise topological spin structure of a fibrewise oriented topological fiber bundle $X \to E \to B$ is also defined in a similar vein, using the vertical tangent microbundle instead.

Next, let us define a topological group $Spin^{c}Top(n)$ by
\[
Spin^{c}Top(n)=(SpinTop(n) \times S^{1})/(\Z/2),
\]
where the action of $\Z/2$ is the diagonal action, and the action of $\Z/2$ on $SpinTop(n)$ is the covering transformation of the double cover $SpinTop(n) \to STop(n)$.
We have a natural map $Spin^{c}Top(n) \to STop(n)$ by ignoring the second factor $S^{1}$.
If $\tau X$ admits a lift to a principal $Spin^{c}Top(n)$-bundle along this map $Spin^{c}Top(n) \to STop(n)$, we call such a list a {\it (topological) spin$^{c}$ structure} of $X$.
A fiberwise topological spin$^{c}$ structure of a fibrewise oriented topological fiber bundle $X \to E \to B$ is also similarly defined.

If we consider an oriented topological $n$-manifold $X$ with boundary instead, there is no major change about topological spin or spin$^{c}$ structure of $X$ described above.
Just note that we have the restriction of a given topological spin or spin$^{c}$ structure of $X$ onto the boundary $\del X$.
This is because a natural inclusion $STop(n-1) \inc STop(n)$ is covered by maps $SpinTop(n-1) \to SpinTop(n)$ and $Spin^{c}Top(n-1) \to Spin^{c}Top(n)$.
Similarly, we can consider fiberwise restriction of a topological spin or spin$^{c}$ structure of the vertical tangent microbundle of a fibrewise oriented topological fiber bundle $X \to E \to B$ which has structure group $\Homeo^{+}(X)$ or $\Homeo(X, \del)$.

Let us discuss a more specific situation.
Let $k \geq 1$, and 
let $(X_{0}, \fraks_{0})$ be a compact connected topological spin $4$-manifold with boundary and $(X_{1}, \fraks_{1}), \ldots, (X_{k}, \fraks_{k})$ be closed connected smooth spin $4$-manifolds.
Assume that $\del X_{0}$ is an integral homology $3$-sphere.
Let $f_{1}, \ldots, f_{k}$ be orientation-preserving diffeomorphisms of $X_{1}, \ldots, X_{k}$.
Assume that each of $f_{i}$ has a fixed $4$-disk $D_{i}$ in $X_{i}$ and preserves $\fraks_{i}$.
Form the connected sum $X := X_{0} \# X_{1} \# \cdots \# X_{k}$ by gluing $X_{i}$ around $D_{i}$, and
regard $f_{i}$ as a homeomorphism of $X$ extending by identity.
Let $X \to E \to T^{k}$ be the topological fiber bundle defined as the multiple mapping torus of the commuting homeomorphisms $f_{1}, \ldots, f_{k}$.

By arguments in pages 52--54 of \cite{BK19}, 
one can find representatives (in the sense of \cref{theo: Holm}) $U_{0}, U_{1}, \ldots, U_{k}$ of tangent microbundles of $X_{0}, X_{1}, \ldots, X_{k}$ with the following properties:
\begin{enumerate}
\item $U_{1}, \ldots, U_{k}$ are principal $GL^{+}(n,\R)$-bundles.
These are obtained as disk bundles of $TX_{1}, \ldots, TX_{k}$ with appropriate radii.
\item $U_{1}, \ldots, U_{k}$ can be glued together to form a representative $U'$, which is also a principal $GL^{+}(n,\R)$-bundle, of the tangent microbundle of $X_{1} \# \cdots \# X_{k}$;
\item $U_{0}$ can be glued with $U'$ to form a representative $U$, which is a principal $STop(n)$-bundle, of the tangent microbundle of $X$;
\item $f_{i} \times f_{i} : X \times X \to X \times X$ preserve $U$ for all $i = 1,\ldots, k$.
\end{enumerate}

Note that, on each of $U_{1}, \ldots, U_{k}$, we have a lift $f_{i}'$ of $f_{i}$ to $U_{i}$ as $GL^{+}(n,\R)$-bundle automorphism as follows.
First, $df_{i}(U_{i})$ gives rise to a fiber bundle over $X_{i}$ with fiber $\overset{\circ}{D^{n}}$, an open $n$-disk.
The fiber of $df_{i}(U_{i})$ and that of $U_{i}$ on the same point of $X_{i}$ are related by a linear transformation in $GL^{+}(n,\R)$, thus we have a $GL^{+}(n,\R)$-bundle isomorphism $\varphi_{i} : df_{i}(U_{i}) \to U_{i}$ covering the identity of $X_{i}$.
Then $f_{i}' := \varphi_{i} \circ df_{i} : U_{i} \to U_{i}$ is a lift of $f_{i}$.

Since $f_{i}'$ is the identity outside the support of $f_{i}$, we can extend $f_{i}'$ as an $STop(n)$-bundle automorphism over $X$ by gluing it with the identity.



\begin{lem}[{\cite[Proposition 4.18]{BK19}, \cite[Lemma~4.2]{KN20}}]
\label{lem: lift top spin}
Let $X \to E \to T^{k}$ be the mapping torus constructed as above.
Then $E$ admits a fiberwise spin structure such that the restriction of $E$ to a fiber is given by $\fraks = \fraks_{0} \# \fraks_{1} \# \cdots \# \fraks_{k}$, and that the restriction of $E$ to the fiberwise boundary is isomorphic to the trivial bundle of spin $3$-manifolds.

A similar statement holds also for topological spin$^{c}$ structure instead of topological spin structure.
\end{lem}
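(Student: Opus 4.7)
The plan is to construct the fiberwise topological spin structure on $E$ by lifting the bundle automorphisms $f_1',\ldots,f_k'$ of the principal $STop(4)$-bundle $U\to X$ (already produced in the discussion preceding the lemma) to commuting automorphisms of a principal $SpinTop(4)$-bundle $\tilde U\to X$ representing $\fraks=\fraks_0\#\fraks_1\#\cdots\#\fraks_k$, and then forming the mapping torus in the spin category. The connected-sum spin structure $\fraks$ exists by the usual argument: each $\fraks_i$ restricts uniquely to the spin structure on the $3$-sphere boundary of the removed disk, and we do not alter $\fraks_0$ near $\partial X_0$.

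I will first produce the lifts. Since $f_i$ is a smooth orientation-preserving diffeomorphism of $X_i$ preserving the isomorphism class of the smooth (hence topological) spin structure $\fraks_i$, the induced $STop(4)$-bundle automorphism $f_i'|_{U_i}$ preserves the spin double cover $\tilde U|_{X_i}\to U_i$, hence admits a lift to a $SpinTop(4)$-bundle automorphism (with the usual $\Z/2$-ambiguity). Because $f_i'$ was chosen to equal the identity outside its support inside $X_i\setminus D_i$, extending the chosen lift by the identity over $X\setminus X_i$ produces a global $SpinTop(4)$-bundle automorphism $\tilde f_i'$ of $\tilde U$ covering $f_i$.

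The only nontrivial step is to check that $\tilde f_1',\ldots,\tilde f_k'$ pairwise commute as bundle automorphisms, so that the $\Z^k$-action on $\tilde U\times\R^k$ defined by $e_i\cdot(u,t)=(\tilde f_i'(u),t-e_i)$ is well defined and descends to a principal $SpinTop(4)$-bundle over the mapping torus $E=(X\times\R^k)/\Z^k$. I expect the main subtlety to lie here, since \emph{a priori} lifts need only commute up to the covering involution. The disjoint-support observation exploited for $U$ itself saves the day: the supports $\supp(f_i)\subset X_i\setminus D_i$ are pairwise disjoint and each $f_i$ preserves its own support, so for $i\neq j$ the map $\tilde f_j'$ is the identity on every point of $\tilde U$ that is ever moved by $\tilde f_i'$. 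Exact commutativity (not merely commutativity up to deck transformation) follows, and quotienting $\tilde U\times\R^k$ by this $\Z^k$-action produces the desired fiberwise topological spin structure on $E$, whose restriction to any fiber is tautologically $\fraks$.

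For the boundary trivialization, each $f_i$ is the identity on $X_0$, in particular on $\partial X_0$, so we may choose $\tilde f_i'=\id$ on $\tilde U|_{X_0}$. The $\Z^k$-action on $\tilde U|_{\partial X_0}\times\R^k$ is then the product action, and the restriction of $E$ together with its fiberwise spin structure to the fiberwise boundary is literally the trivial family $(\partial X,\fraks|_{\partial X})\times T^k$. For topological $\spc$ structures the same argument goes through verbatim with $SpinTop(4)$ replaced by $Spin^cTop(4)$: each $f_i$ preserves the induced topological $\spc$ structure, and the disjoint-support argument for commutativity is independent of whether one works in the spin or the $\spc$ category.
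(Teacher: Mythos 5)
Your proof of the spin case is essentially correct and follows the same idea as the paper's (construct lifts over each $X_i$, extend by identity, argue commutativity). Your commutativity argument via disjoint supports is a minor variant of the paper's (which observes that $[\tilde f_i,\tilde f_j]$ is automatically a deck transformation and then checks it is trivial at a single point of $X_0$), and both boil down to the fact that over $X_0$ every $\tilde f_i$ restricts to an element of the center $\Z/2$.

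The genuine gap is in the spin$^c$ case, where "the same argument goes through verbatim" is not true. The fiber of $SpinTop(4)\to STop(4)$ is $\Z/2$, whereas the fiber of $Spin^cTop(4)\to STop(4)$ is $S^1$. In both cases, the lift $\hat f_i$ of $f_i'$ over $P_i\to X_i$ restricted to the gluing sphere $\partial D_i\cong S^3$ covers the identity and is therefore a gauge transformation valued in the fiber group. In the spin case this is a map $S^3\to\Z/2$, which is automatically constant because $S^3$ is connected, and a global sign change fixes it. In the spin$^c$ case this is a map $S^3\to S^1$, which has no reason a priori to be constant, so "extending the chosen lift by the identity over $X\setminus X_i$" is not well-defined without further work. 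The paper handles this by using $\pi_3(S^1)=0$ (equivalently $[S^3,S^1]=H^1(S^3;\Z)=0$) to produce a gauge transformation $u_i:X\to S^1$ equal to $\hat f_i$ on $\partial D_i$ and equal to $1$ outside a neighborhood of $D_i$; then $u_i^{-1}\cdot\hat f_i$ has support inside $X_i$, after which the disjoint-support argument applies. Without this correction step your construction does not yield a well-defined bundle automorphism over $X$, so this step needs to be added before your argument is complete.

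A second, cosmetic, remark: in the spin case you should state explicitly that you may need to replace $\hat f_i$ by $-\hat f_i$ to make it the identity (rather than $-1$) over $D_i$; otherwise the extension by the identity over $X\setminus X_i$ is not continuous. This is implicit in "the chosen lift" but it is the same kind of normalization as the $\spc$ correction, just easier because the gauge group is $\Z/2$.
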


\begin{proof}
On the assertion about topological spin structure, one can adapt the proof of \cite[Proposition 4.18]{BK19}, but we give a proof for completeness.
By the construction of $f_{1}', \ldots, f_{k}'$ above, 
each $f_{i}'$ lifts to an $SpinTop(n)$-bundle automorphism $\tilde{f}_{i}$ as follows.
For $i = 1, \ldots, k$, let $P_{i} \to X_{i}$ denote a principal $\widetilde{GL}^{+}(n,\R)$-bundle corresponding to the smooth spin structure $\fraks_{i}$, and $P$ denote a principal $SpinTop(n)$-bundle covering $U$ corresponding to the spin structure $\fraks$.
Since $f_{i}$ is supposed to preserve $\fraks_{i}$, there exists a $\widetilde{GL}^{+}(n,\R)$-bundle automorphism $\hat{f}_{i} : P_{i} \to P_{i}$ covering the map $df_{i}$ between the ${GL}^{+}(n,\R)$-frame bundle.
Define a $\widetilde{GL}^{+}(n,\R)$-bundle automorphism $\tilde{f}_{i} : P_{i} \to P_{i}$ to be the pull-back of $\hat{f}_{i}$ under $\varphi_{i}$
It follows that $f_{i}'$ lifts to $\tilde{f}_{i}$, and that $\tilde{f}_{i}$ can be extended to an $SpinTop(n)$-automorphisms of $P$.

Now we see that $[\tilde{f}_{i}, \tilde{f}_{j}]=1$ for $i \neq j$.
Since $[f_{i}',f_{j}']=1$, we have that $[\tilde{f}_{i}, \tilde{f}_{j}]$ is a deck transformation of the double cover $P \to U$, and therefore it suffices to show that $[\tilde{f}_{i}, \tilde{f}_{j}]=1$ at some point of $X$.
Let us take a point $x_{0} \in X_{0}$.
Note that $f_{i}', f_{j}'$ are identity near $x_{0}$ by construction,
and hence $\tilde{f}_{i}, \tilde{f}_{j}$ are deck transformations of $P \to U$ over $x_{0}$.
Since the group of such deck transformations is $\Z/2$, which is abelian, we have that $[\tilde{f}_{i}, \tilde{f}_{j}]=1$ at $x_{0}$.

Now we have seen that $[\tilde{f}_{i}, \tilde{f}_{j}]=1$ for $i \neq j$, and therefore 
$\tilde{f_{1}}, \ldots, \tilde{f_{k}}$ induces a fiberwise spin structure on the mapping torus $E$.
Moreover, as mentioned above, $\tilde{f}_{i}, \tilde{f}_{j}$ are deck transformations of $P \to U$ over $X_{0}$.
If necessary, replacing $\tilde{f}_{i}$ with $-1 \cdot \tilde{f}_{i}$, where $-1$ denotes the unique non-trivial deck transformation of $P \to U$, we can assume that $\tilde{f}_{i}$ act trivially on $P|_{X_{0}}$, in particular on $\del X$.
Thus we obtain a fiberwise spin structure on $E$ whose restriction to the fiberwise boundary gives rise to the trivial bundle of spin $3$-manifolds.
This completes the proof of the assertion about topological spin structure.



The proof of the assertion about topological spin$^{c}$ structure is similar to the proof of \cite[Lemma~4.2]{KN20}.
As in the above spin case,  let $P_{i} \to X_{i}$ denote a principal $(\widetilde{GL}^{+}(n,\R) \times S^{1})/\pm1$-bundle corresponding to the smooth spin$^{c}$ structure $\fraks_{i}$, and $P$ denote a principal $Spin^{c}Top(n)$-bundle covering $U$ corresponding to the spin$^{c}$ structure $\fraks$.
Then it follows that $f_{i}'$ lifts to an $Spin^{c}Top(n)$-automorphism $\hat{f}_{i}$ of $P_{i}$.
Note that $\hat{f}_{i}|_{\del D_{i}}$ covers the identity.
Because of the fibration $S^{1} \to Spin^{c}Top(4) \to STop(4)$,
$\hat{f}_{i}|_{\del D_{i}}$ can be regarded as a continuous map $\del D_{i} \to S^{1}$.
Take a collar neighborhood $N(\del D_{i}) \cong [0,1] \times \del D_{i}$ of $\del D_{i}$ such that $1 \in [0,1]$ is the direction to the origin of $D_{i}$.
Because of $\pi_{3}(S^{1})=0$, one may find a continuous map $u_{i} : X \to S^{1}$ such that $u_{i}|_{\{0\} \times \del D_{i}} = \hat{f}_{i}|_{\{0\} \times \del D_{i}}$ and $u_{i}|_{X \setminus N(D_{i})} \equiv 1$.
Then $u_{i}^{-1} \cdot \hat{f}_{i}$ defines a lift $\tilde{f}_{i}$ of $f_{i}$ to an $Spin^{c}Top(n)$-automorphism of $P$, and since the support of $\tilde{f}_{i}$ lies in $X_{i}$,
$\tilde{f}_{i}$'s commute to each other and they are the identity over $X_{0} \supset Y$.
Therefore 
$\tilde{f_{1}}, \ldots, \tilde{f_{k}}$ induces a fiberwise spin$^{c}$ structure on the mapping torus $E$ whose restriction to the fiberwise boundary gives rise to the trivial bundle of spin$^{c}$ $3$-manifolds.
This completes the proof of the assertion about topological spin$^{c}$ structure.
\end{proof}

\subsection{Non-smoothable families of 4-manifolds with boundary}
\label{subsection: proof of main app}

In the following \cref{theo: main app}, non-smoothable families of 4-manifolds with boundary are detected using \cref{theo: main theo,theo: main theo2}.
Here let us clarify the word `non-smoothable family' in this paper:
we shall consider a continuous fiber bundle $E$ with fiber $4$-manifold $X$ with boundary.
If the structure group of $E$ reduces to $\Homeo(X,\del)$, but $E$ does not admit a reduction to $\Diff(X,\del)$, we say that $E$ is {\it non-smoothable}.
 
\begin{theo}
\label{theo: main app}
Let $Y$ be an oriented integral homology $3$-sphere.
Let $X$ be a simply-connected, compact, oriented, smooth, and indefinite $4$-manifold with boundary 
$Y$.
Suppose that $\sigma(X) \leq 0$.
Then:
\begin{enumerate}
\item 
Assume that at least one of the following holds:
\begin{enumerate}
\item $\sigma(X)<-8$ and $\delta(Y) \leq 0$.
\item $\delta(Y) < 0$, and in addition $\sigma(X)<0$ if $X$ is non-spin.
\item $\sigma(X) = -8$, $\delta(Y) = 0$ and $\mu(Y)=1$.
\end{enumerate}
Then there exists a non-smoothable $\Homeo(X,\del)$-bundle
\[
X \to E \to T^{b^{+}(X)}.
\]
\item Suppose that $X$ is spin.
\begin{enumerate}
\item If $-\sigma(X)/8 > \gamma(Y)$, there exists a non-smoothable $\Homeo(X,\del)$-bundle
\[
X \to E \to T^{b^{+}(X)}.
\]
\item If $b^{+}(X) >1$ and $-\sigma(X)/8 > \beta(Y)$, there exists a non-smoothable $\Homeo(X,\del)$-bundle
\[
X \to E \to T^{b^{+}(X)-1}.
\]
\item If $b^{+}(X) > 2$ and $-\sigma(X)/8 > \alpha(Y)$, there exists a non-smoothable $\Homeo(X,\del)$-bundle
\[
X \to E \to T^{b^{+}(X)-2}.
\] 
\end{enumerate}
\end{enumerate}
\end{theo}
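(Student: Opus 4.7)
The plan is to construct an explicit non-smoothable topological family as a multiple mapping torus of commuting diffeomorphisms, obtained from a topological connected-sum decomposition of $X$, and to derive non-smoothability from \cref{theo: main theo,theo: main theo2}.

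First, I would use Freedman's topological classification of simply-connected 4-manifolds with integral-homology-sphere boundary to produce a simply-connected topological 4-manifold $X_0$ with $\partial X_0 = Y$, $b^+(X_0)=0$, Kirby--Siebenmann invariant zero, and intersection form chosen so that there is a homeomorphism rel $Y$
\[
X \;\cong\; X' := X_0 \,\#\, \bigl(\#_{k} \, S^2 \times S^2\bigr),
\]
where $k$ is $b^+(X)$, $b^+(X)-1$, or $b^+(X)-2$ according to which case is under consideration. Matching rank, signature, parity of the form, and the Kirby--Siebenmann invariant against $X$ determines the form of $X_0$. In the spin cases the Rokhlin-type relation $\mu(Y) \equiv \sigma(X_0)/8 + \ks(X_0) \pmod 2$, together with smoothness of $X$, ensures that $\ks(X_0)=0$ is consistent; in case (1)(c) it is precisely the hypothesis $\mu(Y) = 1$ that makes the Freedman $-E_8$-cap available with $\ks=0$. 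In the non-spin sub-case of (1)(b), the assumption $\sigma(X) < 0$ is used to ensure that $X_0$ can be given the diagonal form $-I_{b^-(X_0)}$ of positive rank, so that $X'$ has odd intersection form matching $X$.

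Next, on each $S^2 \times S^2$ summand let $f_i$ be the swap diffeomorphism, acting as $-\id$ on $H^+(S^2 \times S^2;\R)$ and supported away from the gluing region, so that $f_1, \dots, f_k$ commute. Let $E' \to T^k$ be the smooth multiple mapping torus with fiber $X'$, and view it via $X \cong X'$ as a topological $X$-bundle $E$. By \cref{lem: lift top spin} the structure group of $E$ lifts to $\Aut((X,\fraks),\partial)$ for a spin$^c$ (respectively spin) structure $\fraks$ on $X$ whose restriction to $Y$ is the given $\frakt$. A direct computation shows $H^+(E) \cong \bigoplus_{i=1}^{k} \pi_i^*\mathcal{L}$ up to a trivial summand, where $\pi_i : T^k \to S^1$ is the $i$-th projection and $\mathcal{L} \to S^1$ is the M\"obius line bundle, so that
\[
w_{b^+(X)-j}(H^+(E)) = \prod_{i=1}^{k} \pi_i^* w_1(\mathcal{L}) \neq 0
\]
for the relevant $j \in \{0,1,2\}$.

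If $E$ were smoothable as an $X$-bundle then \cref{theo: main theo} (case (1)) or the relevant part of \cref{theo: main theo2} (case (2)) would apply to $E$ with $\fraks$, yielding a Fr{\o}yshov-type inequality. Arranging $\fraks$ so that $c_1(\fraks)^2 = 0$ in the spin case and $c_1(\fraks)^2 = \sigma(X)$ in the non-spin case (using the canonical spin$^c$ structure on the $-I$-form cap $X_0$), each such inequality becomes precisely the negation of the hypothesis of the case under consideration, giving the desired contradiction. The main technical obstacle I expect is the bookkeeping that guarantees consistent matching of intersection forms, parities, Kirby--Siebenmann invariants, and boundary spin$^c$ structures on both sides of the Freedman homeomorphism across all six cases, with case (1)(c) being the most delicate since the Rokhlin invariant hypothesis $\mu(Y)=1$ enters precisely to secure the topological $-E_8$-cap with $\ks=0$.
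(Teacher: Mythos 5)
There are two genuine gaps in your proposal.

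\textbf{First, the swap diffeomorphism has the wrong sign on $H^+$.}
Write $H^2(S^2\times S^2;\Z)=\Z\alpha\oplus\Z\beta$ with $\alpha^2=\beta^2=0$ and $\alpha\cdot\beta=1$. The swap $(x,y)\mapsto(y,x)$ exchanges $\alpha$ and $\beta$, hence fixes the positive-definite line spanned by $\alpha+\beta$ and reverses the line spanned by $\alpha-\beta$. So the swap acts as $+\id$ on $H^+(S^2\times S^2;\R)$ and $-\id$ on $H^-$ --- exactly the opposite of what you claim. With this choice of $f_i$ the resulting bundle $H^+(E)\to T^k$ would be \emph{trivial}, so $w_{b^+(X)-j}(H^+(E))=0$ and \cref{theo: main theo,theo: main theo2} give no obstruction. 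The paper instead uses the componentwise complex conjugation on $\CP^1\times\CP^1$, isotoped to fix a disk; this acts as $-\id$ on all of $H^2(S^2\times S^2;\Z)$, in particular reversing $H^+$.

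\textbf{Second, a diagonal $X_0$ with characteristic vector $c$ of $c^2=\sigma(X)$ cannot close cases (1-a) and (1-c).}
With your choice, $(c_1(\fraks)^2-\sigma(X))/8=0$, so a smoothing of $E$ together with \cref{theo: main theo} would yield only the inequality $0\leq\delta(Y)$. This is perfectly consistent with the hypotheses $\delta(Y)\leq 0$ of case (1-a) and $\delta(Y)=0$ of case (1-c): no contradiction. Worse, the diagonal form $-I_m$ ($m=-\sigma(X_0)>0$) makes the stronger inequality unreachable in principle: every characteristic vector $c=(a_1,\dots,a_m)$ (all $a_i$ odd) has $c^2=-\sum a_i^2\leq -m=\sigma(X_0)$, so $(c^2-\sigma)/8\leq 0$ always. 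To get $(c^2-\sigma)/8=1$ --- which yields the contradiction $1\leq\delta(Y)$ --- the paper's decomposition includes a $-E_8$ summand in the definite cap, and takes the zero characteristic on that summand (so it contributes $0$ to $c^2$ but $-8$ to $\sigma$). This is also where the hypothesis $\sigma(X)<-8$, respectively $\sigma(X)=-8$ together with $\mu(Y)=1$, is actually used: it guarantees that the topological cap realizes the form $\langle-1\rangle^{\oplus n}\oplus(-E_8)$ (possibly with a $-\CP^2_{\mathrm{fake}}$ summand to match the Kirby--Siebenmann invariant). Your argument, by contrast, never uses the hypothesis $\sigma(X)<-8$ at all. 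The rest of your construction (the multiple mapping torus, \cref{lem: lift top spin}, and the spin cases (2-a), (2-b), (2-c) and the non-spin case (1-b)) agrees with the paper once the diffeomorphism $f_0$ is corrected.
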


In \cite{KKN19} Kato, Nakamura and the first author introduced an idea to detect non-smoothable families of closed $4$-manifold using families gauge theory and to apply them to extract difference between diffeomorphism groups and homeomorphism groups \cite[Theorem~1.4, Corollary~1.5]{KKN19}.
That was extensively generalized by Baraglia~\cite{Ba19} soon later.
\Cref{theo: main app} is an analog of \cite[Theorem~1.8]{Ba19}.

To prove \cref{theo: main app}, we need the following results regarding topological 4-manifolds with boundaries by Freedman.
Roughly speaking, these results state that Freedman's classification result holds also for topological $4$-manifolds with homology sphere boundary.

\begin{theo}[See, for example, \cite{B86,B93}]
\label{theo: Freedman boundary}
Let $Y$ be an integral homology $3$-sphere.
\begin{itemize} 
\item[(i)] 
The set of simply-connected compact topological 4-manifolds with boundary $Y$ having an even intersection form up to homeomorphism is determined by unimodular intersection forms up to isomorphism. 
\item[(ii)]
The set of simply-connected compact topological 4-manifolds with boundary $Y$ having an odd intersection form up to homeomorphism is determined by unimodular intersection forms and Kirby--Siebenmann invariant up to isomorphism. 
\end{itemize}
\end{theo}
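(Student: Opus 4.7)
The approach is to reduce both existence and uniqueness to Freedman's classification theorem for closed simply-connected topological $4$-manifolds by capping off the boundary with a contractible topological $4$-manifold. The crucial auxiliary input is Freedman's theorem that every integral homology $3$-sphere $Y$ bounds a compact contractible topological $4$-manifold $\Delta(Y)$, unique up to homeomorphism rel boundary.

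For existence, given a unimodular symmetric bilinear form $Q$ (together with a Kirby--Siebenmann invariant $\kappa \in \Z/2$ in case (ii)), Freedman's classification produces a closed simply-connected topological $4$-manifold $N$ with intersection form $Q$ (and $\ks(N)=\kappa$ in the odd case). The plan is to remove an open $4$-ball from the interior of $N$ and from the interior of $\Delta(Y)$ and glue the results along their $S^{3}$ boundaries via an orientation-reversing homeomorphism. The outcome $X$ is a compact topological $4$-manifold with $\del X = Y$, simply connected by van Kampen, with intersection form $Q$ (the contractible cap contributes nothing to $H_{2}$, as one checks by Mayer--Vietoris), and with Kirby--Siebenmann invariant equal to $\kappa$ in case (ii).

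For uniqueness, let $X_{1},X_{2}$ be simply-connected compact topological $4$-manifolds bounded by $Y$ with isomorphic intersection forms (and, in case (ii), equal Kirby--Siebenmann invariants). I would form the closed manifolds $\hat X_{i} := X_{i}\cup_{Y} \Delta(Y)$. Each $\hat X_{i}$ is closed and simply connected, its intersection form is isomorphic to that of $X_{i}$, and $\ks(\hat X_{i}) = \ks(X_{i}) + \ks(\Delta(Y))$; hence $\hat X_{1}$ and $\hat X_{2}$ share the same Freedman invariants, and Freedman's theorem supplies a homeomorphism $\Phi : \hat X_{1}\to \hat X_{2}$.

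The main obstacle is recovering a homeomorphism $X_{1}\cong X_{2}$ from $\Phi$, since a priori $\Phi$ need not send the chosen cap $\Delta(Y)\subset \hat X_{1}$ onto $\Delta(Y)\subset \hat X_{2}$. What is required is the following uniqueness statement for caps: any two locally flat embeddings of $\Delta(Y)$ into a closed simply-connected topological $4$-manifold inducing the same oriented boundary identification are related by an ambient homeomorphism. This is precisely the technical content of Boyer's papers [B86, B93], where it is established by combining Freedman's topological $h$-cobordism theorem in dimension four with a careful analysis of the complement of the cap (in effect showing that the complement is an $h$-cobordism between two copies of $X_{i}$ after stabilization). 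Granting this, one modifies $\Phi$ so that it carries one cap onto the other, and the induced homeomorphism of complements produces the desired $X_{1}\cong X_{2}$.
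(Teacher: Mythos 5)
This theorem is quoted in the paper without proof, with the citation to Boyer [B86, B93] serving as the reference; the paper itself never argues it. Your outline does correctly identify the standard proof strategy behind Boyer's result — cap off with a Freedman contractible bounding $\Delta(Y)$, reduce existence and (via the capped-off closed manifolds) uniqueness to Freedman's closed classification, and then appeal to a cap-uniqueness statement to descend a homeomorphism of closed manifolds to a homeomorphism of the pieces with boundary — and you are honest that the cap-uniqueness step is exactly the technical content of Boyer's papers, so this is a description of Boyer's route rather than an independent alternative.

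Two imprecisions are worth fixing. First, in the existence step the Kirby--Siebenmann invariant is additive under the interior connected sum, and $\ks(\Delta(Y)) = \mu(Y)$ (the Rohlin invariant), so if you want $\ks(X) = \kappa$ you must take the Freedman manifold $N$ with $\ks(N) = \kappa + \mu(Y)$, not $\kappa$. (In the even case this bookkeeping is moot, since $\ks$ is then forced to equal $\sigma/8 + \mu(Y)$ and is not an independent invariant.) Second, the cap-uniqueness statement as you phrase it — that any two locally flat embeddings of $\Delta(Y)$ into a closed simply-connected $4$-manifold with the same oriented boundary identification are ambiently equivalent — is too strong: the complement of an embedded contractible cap need not be simply connected, and without that hypothesis the statement fails. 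What Boyer actually needs (and what suffices for your application, since the complements are $X_1$ and $\Phi(X_1)$, both simply connected) is the version where both complements are assumed simply connected. Making both adjustments, the outline is a correct sketch of the Boyer--Freedman argument.
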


\begin{theo}[{\cite[9.3C Corollary]{FQ90}}]
\label{theo: contractible bound}
Every integral homology $3$-sphere bounds a contractible topological $4$-manifold. 
\end{theo}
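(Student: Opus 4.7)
The plan is to deduce the existence of a contractible topological bound from the classification \cref{theo: Freedman boundary} together with a Lefschetz-duality homology computation. First, I would apply \cref{theo: Freedman boundary}(i) to the empty (rank zero) symmetric bilinear form over $\Z$, which is vacuously even and unimodular. This realizes a simply-connected compact topological $4$-manifold $W$ with $\del W = Y$ whose intersection form vanishes; in particular the free part of $H_2(W;\Z)$ is trivial.

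Next, I would verify that $W$ is acyclic. Since $\pi_1(W) = 0$ we have $H_1(W;\Z) = 0$, and combining Lefschetz duality with the long exact sequence of $(W,\del W)$ and the fact that $H_*(Y;\Z) \cong H_*(S^3;\Z)$ yields
\[
H_2(W;\Z) \cong H_2(W,\del W;\Z) \cong H^2(W;\Z) \cong \Hom(H_2(W;\Z),\Z),
\]
so $H_2(W;\Z)$ is free abelian, and the vanishing of the intersection form upgrades to $H_2(W;\Z) = 0$. Similarly $H_3(W;\Z) \cong H^1(W,\del W;\Z) = 0$ and $H_4(W;\Z) \cong H^0(W,\del W;\Z) = 0$, since $W$ and $Y$ are connected with $H^1(W) = H^1(Y) = 0$, so the relevant portions of the long exact sequence of the pair collapse. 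Thus $\tilde H_*(W;\Z) = 0$, and combined with $\pi_1(W) = 0$, Hurewicz together with Whitehead's theorem (applicable since a compact topological $4$-manifold with boundary has the homotopy type of a finite CW complex) forces $W$ to be contractible.

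The main obstacle lies in the very first step: realizing the empty intersection form topologically over an arbitrary integral homology $3$-sphere is precisely the deep content of Freedman's surgery in dimension four that underlies \cref{theo: Freedman boundary}. Everything after that is a formal homological computation.
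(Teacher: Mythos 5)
The paper offers no proof of this statement; it simply cites Freedman--Quinn, Corollary~9.3C. You instead derive it from \cref{theo: Freedman boundary}, which is a genuinely different route. The homological bookkeeping is correct: with $\pi_1(W)=0$ and $\del W$ a homology $3$-sphere, Lefschetz duality and the long exact sequence of the pair give $H_3(W;\Z)=H_4(W;\Z)=0$ and show $H_2(W;\Z)$ is free with unimodular intersection pairing, so vanishing of that pairing forces $H_2(W;\Z)=0$; Hurewicz and Whitehead then yield contractibility, Whitehead being applicable because a compact topological manifold is an ANR and hence has the homotopy type of a CW complex. Two caveats. First, you need the \emph{realization} half of \cref{theo: Freedman boundary} (that the trivial even unimodular form is actually hit by some bounding $4$-manifold); as phrased in the paper the statement could be read as a uniqueness assertion only, though Boyer's theorem does include realization. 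Second, and more substantively, the logical order is usually the reverse of what you wrote: Freedman's contractible-bounding theorem is an \emph{ingredient} in the proof of the relative classification theorem (one caps off an arbitrary bounding with the contractible piece to reduce to the closed case), so your derivation would be circular if unwound to first principles. As a consistency check given the classification as a black box --- which is all the present paper needs, since both statements are imported from the literature --- it is sound, and you correctly identify where the real depth lies.
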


 Now we may start the proof of \cref{theo: main app}.
 A principal idea of the construction of non-smoothable families here is based on arguments for families of closed $4$-manifolds: \cite[Sections~3, 4]{Na10}, \cite[Theorem~4.1]{KKN19}, and \cite[Theorem 10.3]{Ba19}.
 
 \begin{proof}[Proof of \cref{theo: main app}]
By \cref{rem: comparison assumption},
if $X$ is spin and satisfies one of the assumptions in (1-a), (1-b), (1-c) of the statement of \cref{theo: main app}, then $X$ satisfies the assumption in (2-a).
Moreover the conclusion of the case (1) is just the same as (2-a).
Therefore, when we give a proof of the case (1), we can additionally suppose that $X$ is not spin, since the case that $X$ is spin is deduced from the case (2-a), which will be proven independently from the case (1).
 
  Let $-E_{8}$ denote the negative-definite $E_{8}$-manifold.
 Let $W$ be a contractible topological $4$-manifold bounded by $Y$, whose existence is ensured by 
 \cref{theo: contractible bound}.
Here we note the equality
 \begin{align}
 \label{eq: ks mu}
\ks(W) = \mu(Y) 
 \end{align}
 in $\Z/2$, where $\ks(W) \in H^{4}(W,\del W;\Z/2) \cong \Z/2$ denote the Kirby--Siebenmann invariant of $W$.
This equality is checked as follows.
We refer the readers to \cite[Subsection~8.2]{FNOP19} for some properties of the Kirby--Siebenmann invariant.
Recall that both of the signature and the Kirby--Siebenmann invariant are additive under the sum operation along a codimension-$0$ submanifold.
Recall also that the Kirby--Siebenmann invariant vanishes for a smooth $4$-manifold, even for the case with boundary.
Lastly, recall that we have a formula $\ks(Z) = \sigma(Z)/8 \mod 2$ for an oriented spin closed $4$-manifold $Z$.
By the definition of the Rohlin invariant,
we may take an oriented spin compact smooth $4$-manifold $W'$ bounded by $Y$ with $\mu(Y)=\sigma(W')/8 \mod 2$.
Setting $Z = -W' \cup_{Y} W$, we have
\begin{align*}
\ks(W)
&=\ks(-W') + \ks(W) \\
&= \ks(Z) = \sigma(Z)/8 
= -\sigma(W')/8 + \sigma(W)/8
= \mu(Y) + \sigma(W)/8.
\end{align*}
In particular, since $W$ was taken to be contractible, we have \eqref{eq: ks mu}.

First let us suppose that $X$ is not spin, and suppose that $\sigma(X)<-8$ and $\delta(Y) \leq 0$.
Let $-\CP^{2}_{\mathrm{fake}}$ denote the fake $\CP^{2}$, namely the closed simply-connected topological $4$-manifold whose intersection form is $(-1)$ and has non-zero Kirby--Siebenmann invariant.
It follows from \eqref{eq: ks mu} and \cref{theo: Freedman boundary} that $X$ is homeomorphic to
\begin{align}
\label{eq: nonspin connected sum ho}
b^{+}(X)(S^{2} \times S^{2}) \# n(-\CP^{2}) \# (-E_{8}) \#(-\CP^{2}_{\mathrm{fake}})\#W
\end{align}
or
\begin{align}
\label{eq: nonspin connected sum}
b^{+}(X)(S^{2} \times S^{2}) \# n(-\CP^{2}) \# (-E_{8}) \#W
\end{align}
for some $n \geq 0$, according to $\mu(Y)=0$ or $\mu(Y)=1$.
Let $f_{0} \in \Diff(S^2\times S^2)$ be an orientation-preserving self-diffeomorphism satisfying the following two properties:
\begin{itemize}
\item There exists an embedded $4$-disk in $S^2\times S^2$ such that the restriction of $f_{0}$ on the disk is the identity map.
\item $f_{0}$ reverses orientation of $H^+(S^2\times S^2)$.
\end{itemize}
Such $f_{0}$ can be made by starting with the componentwise complex conjugation of $\CP^{1} \times \CP^{1} = S^{2} \times S^{2}$, and deforming it around a fixed point by isotopy so that it has a fixed disk.
Let $f_{1}, \ldots, f_{b^{+}}$ be copies of $f_{0}$ on the connected sum factors of $b^{+}(X)(S^{2} \times S^{2})$.
Since $f_{i}$ have fixed disks, we can extend them as homeomorphisms of $X$ by the identity map on the other connected sum factors in \eqref{eq: nonspin connected sum ho} or \eqref{eq: nonspin connected sum}.
Since the extended homeomorphisms obviously mutually commute, they give rise to the multiple mapping torus
\[
X \to E \to T^{b^{+}}.
\]
Note that the restrictions of $f_{1}, \ldots, f_{b^{+}}$ onto the boundary are the identity maps, 
and hence $E$ is a $\Homeo(X,\del)$-bundle.
Since $f_{0}$ was taken to reverse orientation of $H^+(S^2\times S^2)$,
it is easy to see that the associated bundle $H^{+}(E) \to T^{b^{+}}$ satisfies $w_{b^{+}}(H^{+}(E)) \neq 0$.

Let $c \in H^{2}(X;\Z)$ be a cohomology class given by
\[
c = (0,e_{1}, \ldots, e_{n}, 0, e)
\]
or
\[
c = (0,e_{1}, \ldots, e_{n}, 0)
\]
under the direct sum decomposition
\[
H^{2}(X) = H^{2}(b^{+}(X)(S^{2} \times S^{2})) \oplus H^{2}(-\CP^{2})^{\oplus n} \oplus H^{2}(-E_{8}) \oplus H^{2}(-\CP^{2}_{\mathrm{fake}}),
\]
or
\[
H^{2}(X) = H^{2}(b^{+}(X)(S^{2} \times S^{2})) \oplus H^{2}(-\CP^{2})^{\oplus n} \oplus H^{2}(-E_{8}),
\]
according to $\mu(Y)=0$ or $\mu(Y)=1$.
Here $e_{i}$ and $e$ are a generator of $H^{2}(-\CP^{2})$ and a generator of $H^{2}(-\CP^{2}_{\mathrm{fake}})$ respectively.
By \cref{lem: lift top spin}, $E$ admits a fiberwise topological spin$^{c}$ structure whose characteristic restricted over a fiber coincides with $c$ above.

Now suppose that $E$ is smoothable, namely $E$ reduces to a $\Diff(X,\del)$-bundle.
Then the topological spin$^{c}$ structure of $E$ above induces a smooth spin$^{c}$ structure, and the restriction of the spin$^{c}$ structure over a fiber, denoted by $\fraks$, has $c_{1}(\fraks)=c$.
Now we have $(c_{1}(\fraks)^{2}-\sigma(X))/8=1$, and hence \cref{theo: main theo} implies that $1 \leq \delta(Y)$.
This contradicts the assumption that $\delta(Y) \leq 0$, and hence $E$ is not smoothable.

Next, let us suppose that $X$ is not spin, and suppose that $\sigma(X) = -8$, $\delta(Y) = 0$, and $\mu(Y)=1$.
By \eqref{eq: ks mu} we have that $\ks(W)=1$ in this case.
It follows from \cref{theo: Freedman boundary} that $X$ is homeomorphic to \eqref{eq: nonspin connected sum}
for $n = 0$.
The remaining argument is exactly the same as the previous paragraph.

Next, let us suppose that $X$ is not spin, and suppose that $\delta(Y) < 0$ and $\sigma(X)<0$.
Most of arguments here are just the same as the arguments until previous paragraph.
First, it follows from \cref{theo: Freedman boundary} that $X$ is homeomorphic to
\begin{align*}
b^{+}(X)(S^{2} \times S^{2}) \# (b^{-}(X)-b^{+}(X))(-\CP^{2}) \#W
\end{align*}
if $\mu(Y)=0$, and $X$ is homeomorphic to
\begin{align*}
b^{+}(X)(S^{2} \times S^{2}) \# (b^{-}(X)-b^{+}(X)-1)(-\CP^{2}) \# (-\CP^{2}_{\mathrm{fake}})\#W
\end{align*}
if $\mu(Y)=1$ respectively.
Let $f_{1}, \ldots, f_{b^{+}}$ be copies of $f_{0}$ on the connected sum factors of $b^{+}(X)(S^{2} \times S^{2})$.
Extending them as homeomorphisms of $X$, we obtain a $\Homeo(X,\del)$-bundle $X \to E \to T^{b^{+}}$ for which $w_{b^{+}}(H^{+}(E)) \neq 0$.
Let us take $c \in H^{2}(X;\Z)$ defined by
\[
c = (0, e_{1}, \ldots, e_{b^{-}-b^{+}}) 
\]
or
\[
c = (0, e_{1}, \ldots, e_{b^{-}-b^{+}-1}, e) 
\]
under
\[
H^{2}(X) = H^{2}(b^{+}(X)(S^{2} \times S^{2})) \oplus H^{2}(-\CP^{2})^{\oplus b^{-}-b^{+}},
\]
or
\[
H^{2}(X) = H^{2}(b^{+}(X)(S^{2} \times S^{2})) \oplus H^{2}(-\CP^{2})^{\oplus b^{-}-b^{+}-1} \oplus H^{2}(-\CP^{2}_{\mathrm{fake}}),
\]
according to $\mu(Y)=0$ or $\mu(Y)=1$.
Then we have $c^{2} -\sigma(X)=0$.
Arguing exactly as in the last case, if we suppose that $E$ is smoothable, \cref{theo: main theo} implies that $0 \leq \delta(Y)$.
This contradicts the assumption that $\delta(Y) \leq -1$, and hence $E$ is not smoothable.

Next, let us suppose that $X$ is spin, and suppose that $-\sigma(X)/8 > \gamma(Y)$.
By the definition of the Rohlin invariant , we have $\sigma(X)/8 \equiv \mu(Y) \mod 2$,
and hence $\ks(W) = \sigma(X)/8 \mod 2$ holds by \eqref{eq: ks mu}.
Then it follows from \cref{theo: Freedman boundary} that $X$ is homeomorphic to
\begin{align*}
b^{+}(X)(S^{2} \times S^{2})\# n(-E_{8}) \#W
\end{align*}
for some $n \geq 0$.
As well as the non-spin case, considering copies of $f_{0}$ on the connected sum factors of $b^{+}(X)(S^{2} \times S^{2})$ and extend them to the whole of $X$ as homeomorphisms, we obtain 
a $\Homeo(X,\del)$-bundle $X \to E \to T^{b^{+}}$ for which $w_{b^{+}}(H^{+}(E)) \neq 0$.
By \cref{lem: lift top spin}, $E$ admits a fiberwise topological spin structure.
Arguing exactly as in the non-spin case, if we suppose that $E$ is smoothable, \cref{theo: main theo2} implies that $-\sigma(X)/8 \leq \gamma(Y)$.
This contradicts the assumption that $-\sigma(X)/8 > \gamma(Y)$, and hence $E$ is not smoothable.

The remaining cases, $X$ is spin and $b^{+}(X) > 1, -\sigma(X)/8 > \beta(Y)$, or $b^{+}(X) >2, -\sigma(X)/8 > \alpha(Y)$, are also similar.
Consider copies $f_{1}, \ldots, f_{b^{+}-1}$ or $f_{1}, \ldots, f_{b^{+}-2}$ of $f_{0}$ on the connected sum factors of $(b^{+}-1)(S^{2} \times S^{2})$ or $(b^{+}-2)(S^{2} \times S^{2})$ inside $b^{+}(X)(S^{2} \times S^{2})$, according to the assumption on $\beta(Y)$ or $\alpha(Y)$.
Then we obtain $X \to E \to T^{b^{+}-1}$ or $X \to E \to T^{b^{+}-2}$ for which $w_{b^{+}-1}(H^{+}(E)) \neq 0$ or $w_{b^{+}-2}(H^{+}(E)) \neq 0$ respectively.
\Cref{theo: main theo2} implies that this $E$ is not smoothable.
This completes the proof of \cref{theo: main app}.
\end{proof}

\subsection{Comparison between $\Diff$ and $\Homeo$} 
  \label{subsec: Comparison between Diff and Homeo}

Let us extract homotopical difference between various diffeomorphism groups and homeomorphism groups using \cref{theo: main app}.
The results in this \lcnamecref{subsec: Comparison between Diff and Homeo} contain \cref{cor: Diff Homeo absolute,cor: rel Diff Homeo} stated in the introduction.
First let us start with comparison between the relative diffeomorphism and homeomorphism groups:

\begin{cor}
\label{cor: rel Diff Homeo ap}
Let $Y$ be an oriented integral homology $3$-sphere.
Let $X$ be a simply-connected, compact, oriented, smooth, and indefinite $4$-manifold with boundary 
$Y$.
Suppose that $\sigma(X) \leq 0$.
Suppose that $X$ and $Y$ satisfy at least one of the following conditions:
\begin{enumerate}
\item $\sigma(X)<-8$ and $\delta(Y) \leq 0$.
\item $\delta(Y) < 0$, and in addition $\sigma(X)<0$ if $X$ is non-spin.
\item $\sigma(X) = -8$, $\delta(Y) = 0$ and $\mu(Y)=1$.
\item $X$ is spin and $-\sigma(X)/8 > \gamma(Y)$.
\item $X$ is spin, $b^{+}(X) > 1$ and $-\sigma(X)/8 > \beta(Y)$.
\item $X$ is spin, $b^{+}(X) > 2$ and $-\sigma(X)/8 > \alpha(Y)$.
\end{enumerate}
Then the inclusion map
\[
\Diff(X,\del) \inc \Homeo(X,\del)
\]
is not a weak homotopy equivalence.

More precisely:
\begin{itemize}
\item If at least one of (1), (2), (3), (4) is satisfied, the induced map
\[
\pi_{n}(\Diff(X,\del)) \to \pi_{n}(\Homeo(X,\del))
\]
is not an isomorphism for some $n \in \{0, \ldots, b^{+}(X)-1\}$.
\item If (5) is satisfied, the induced map
\[
\pi_{n}(\Diff(X,\del)) \to \pi_{n}(\Homeo(X,\del))
\]
is not an isomorphism for some $n \in \{0, \ldots, b^{+}(X)-2\}$.
\item If (6) is satisfied, the induced map
\[
\pi_{n}(\Diff(X,\del)) \to \pi_{n}(\Homeo(X,\del))
\]
is not an isomorphism for some $n \in \{0, \ldots, b^{+}(X)-3\}$.
\end{itemize}
\end{cor}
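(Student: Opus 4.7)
The plan is to derive \cref{cor: rel Diff Homeo ap} from the existence of non-smoothable $\Homeo(X,\del)$-bundles over tori provided by \cref{theo: main app}, using standard classifying-space obstruction theory, following the same pattern as in the closed case treated by Baraglia \cite{Ba19}. First I would pass to classifying spaces: since $\Homeo(X,\del)$ and $\Diff(X,\del)$ have the homotopy type of CW complexes, fiber bundles with these structure groups over a CW base are classified by homotopy classes of maps into $B\Homeo(X,\del)$ and $B\Diff(X,\del)$, and the inclusion induces a map $\iota : B\Diff(X,\del) \to B\Homeo(X,\del)$. A $\Homeo(X,\del)$-bundle over $B$ is smoothable in the sense used in \cref{subsection: proof of main app} precisely when its classifying map $B \to B\Homeo(X,\del)$ lifts, up to homotopy, through $\iota$.

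Next I would invoke obstruction theory for lifts along $\iota$. Writing $F$ for the homotopy fiber of $\iota$ and using $\pi_{m+1}(BG)\cong \pi_m(G)$, the long exact sequence of the fibration shows that $\pi_m(F)=0$ holds exactly when $\pi_m(\Diff(X,\del))\to \pi_m(\Homeo(X,\del))$ is surjective and $\pi_{m-1}(\Diff(X,\del))\to \pi_{m-1}(\Homeo(X,\del))$ is injective. In particular, if the inclusion induces an isomorphism on $\pi_n$ for every $n\in\{0,1,\ldots,k-1\}$, then $\pi_m(F)=0$ for every $m\in\{0,1,\ldots,k-1\}$. Standard obstruction theory (with local coefficients, so that no simple connectivity of $F$ is required) then yields that every map from a CW complex of dimension at most $k$ into $B\Homeo(X,\del)$ lifts through $\iota$, since the relevant obstructions live in $H^{m+1}(T^k;\pi_m(F))$ and vanish for $m\le k-1$ by the hypothesis and for $m\ge k$ by the cohomological dimension of $T^k$.

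Now I would argue by contradiction. Suppose that in one of the cases (1)--(6), the map $\pi_n(\Diff(X,\del))\to\pi_n(\Homeo(X,\del))$ is an isomorphism throughout the claimed range. In cases (1)--(4) this range is $\{0,\ldots,b^+(X)-1\}$, so the criterion above forces every $\Homeo(X,\del)$-bundle over $T^{b^+(X)}$ to be smoothable, contradicting the existence of the non-smoothable bundle over $T^{b^+(X)}$ from \cref{theo: main app}. In cases (5) and (6) the ranges $\{0,\ldots,b^+(X)-2\}$ and $\{0,\ldots,b^+(X)-3\}$ likewise force every $\Homeo(X,\del)$-bundle over $T^{b^+(X)-1}$ or $T^{b^+(X)-2}$, respectively, to be smoothable, again contradicting \cref{theo: main app}. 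This gives the refined non-isomorphism assertion, and the failure of weak homotopy equivalence in the first sentence of the corollary is an immediate consequence.

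The main obstacle is bookkeeping rather than conceptual: one must keep the three torus dimensions $b^+(X)$, $b^+(X)-1$, and $b^+(X)-2$ appearing in \cref{theo: main app} matched to the three ranges in the conclusion, and one must apply classifying-space obstruction theory to the infinite-dimensional topological groups $\Diff(X,\del)$ and $\Homeo(X,\del)$. Both points are standard in this subject and have been handled in essentially the same form in \cite{Ba19} and \cite{KKN19}, so no further input from gauge theory is required beyond \cref{theo: main app} itself.
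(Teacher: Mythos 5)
Your proposal is correct and takes essentially the same approach as the paper: the authors' proof is a one-line reduction to \cref{theo: main app} together with ``standard obstruction theory'' following \cite{Ba19}, and your write-up simply spells out that obstruction-theoretic step (classifying spaces, the homotopy fiber of $B\Diff(X,\del)\to B\Homeo(X,\del)$, and the cohomological dimension of the relevant tori) in detail.
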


\begin{proof}
This follows from \cref{theo: main app} combined with the standard obstruction theory, as well as the proof of \cite[Corollary~10.5]{Ba19}.
\end{proof}

\begin{cor}
\label{cor: Diff Homeo absolute ap}
Let $Y$ be an oriented integral homology $3$-sphere.
Let $X$ be a simply-connected, compact, oriented, smooth, and indefinite $4$-manifold with boundary $Y$.
Suppose that $\sigma(X) \leq 0$.
Suppose that $X$ and $Y$ satisfy at least one of the conditions (1)-(6) in the statement of \cref{cor: rel Diff Homeo ap}.
Then the inclusion map
\[
\Diff(X) \inc \Homeo(X)
\]
is not a weak homotopy equivalence.

More precisely:
\begin{itemize}
\item If at least one of (1), (2), (3), (4) is satisfied, the induced map
\[
\pi_{n}(\Diff(X)) \to \pi_{n}(\Homeo(X))
\]
is not an isomorphism for some $n \in \{0, \ldots, b^{+}(X)\}$.
\item If (5) is satisfied, the induced map
\[
\pi_{n}(\Diff(X)) \to \pi_{n}(\Homeo(X))
\]
is not an isomorphism for some $n \in \{0, \ldots, b^{+}(X)-1\}$.
\item If (6) is satisfied, the induced map
\[
\pi_{n}(\Diff(X)) \to \pi_{n}(\Homeo(X))
\]
is not an isomorphism for some $n \in \{0, \ldots, b^{+}(X)-2\}$.
\end{itemize}
\end{cor}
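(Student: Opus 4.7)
The plan is to deduce \cref{cor: Diff Homeo absolute ap} from the relative version \cref{cor: rel Diff Homeo ap} by combining the restriction-to-boundary fibration with the classical three-dimensional fact that for any closed orientable $3$-manifold $Y$ the natural inclusion $\Diff(Y) \hookrightarrow \Homeo(Y)$ is a weak homotopy equivalence (via the Smale conjecture proved by Hatcher and its extensions by Hatcher--Ivanov).

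First I would invoke the Serre fibrations
\begin{align*}
\Diff(X,\del) \to \Diff(X) \to \Diff(\del X), \qquad \Homeo(X,\del) \to \Homeo(X) \to \Homeo(\del X),
\end{align*}
where each second map is restriction to the boundary; the smooth case is due to Palais and the topological case follows from work of Edwards--Kirby. The associated long exact sequences of homotopy groups fit together into a ladder whose vertical arrows are the natural inclusions $\Diff \hookrightarrow \Homeo$. Since $\del X = Y$ is an integral homology $3$-sphere, every vertical arrow in the columns for $\del X$ is an isomorphism by the three-dimensional fact above.

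Now suppose, for a contradiction, that $\pi_n(\Diff(X)) \to \pi_n(\Homeo(X))$ is an isomorphism for every $n$ in the range asserted in the \lcnamecref{cor: Diff Homeo absolute ap} (say, $0 \leq n \leq b^+(X)$ in cases (1)--(4)). Then for each $n$ one lower, the five lemma applied to
\begin{align*}
\pi_{n+1}(\Diff(X)) \to \pi_{n+1}(\Diff(\del X)) \to \pi_n(\Diff(X,\del)) \to \pi_n(\Diff(X)) \to \pi_n(\Diff(\del X))
\end{align*}
(mapping to the corresponding sequence for $\Homeo$) has the four outer vertical maps isomorphisms --- the two middle ones by assumption and the two boundary ones by the three-dimensional result --- and hence forces $\pi_n(\Diff(X,\del)) \to \pi_n(\Homeo(X,\del))$ to be an isomorphism for all $n \in \{0,\ldots,b^+(X)-1\}$. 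This contradicts \cref{cor: rel Diff Homeo ap} in cases (1)--(4); the ranges for (5) and (6) follow by the same shift-by-one bookkeeping. Contrapositively, every relative non-isomorphism at $\pi_n$ produces an absolute non-isomorphism at either $\pi_n$ or $\pi_{n+1}$, yielding the stated sharper ranges.

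The main obstacle is not mathematical but citational: one must state precisely the three-dimensional equivalence $\Diff(Y) \simeq \Homeo(Y)$ in the generality of integral homology $3$-spheres, and ensure that the restriction-to-boundary maps are Serre fibrations in both the smooth and topological categories. A minor technical point is that at $n = 0$ the long exact sequences are sequences of pointed sets rather than groups, but the five-lemma-style diagram chase remains valid in that range, since the boundary columns are still bijective and the groups $\pi_1$ above provide the connecting homomorphisms.
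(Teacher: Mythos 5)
Your argument is essentially the same as the paper's: the paper also deduces the absolute statement from \cref{cor: rel Diff Homeo ap} by comparing the long exact sequences of the restriction fibrations $\Diff(X,\del)\to\Diff(X)\to\Diff(Y)$ and $\Homeo(X,\del)\to\Homeo(X)\to\Homeo(Y)$ via the five lemma, using that $\Diff(Y)\hookrightarrow\Homeo(Y)$ is a weak equivalence for a closed orientable $3$-manifold. The only differences are cosmetic and bibliographic: the paper cites Cerf and Hatcher for the $3$-dimensional equivalence (rather than Hatcher--Ivanov) and cites Pardon for the restriction-to-boundary exact sequence, and the paper is somewhat more explicit than your proposal about the fact that the restriction maps to $\Diff(Y)$ and $\Homeo(Y)$ need not be surjective at $\pi_0$ (their images are unions of components), so that at the bottom of the ladder one is really running a four-lemma argument for pointed sets; your proposal glosses over this, but your parenthetical remark about $n=0$ shows you noticed the issue and the argument is unaffected.
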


\begin{proof}
Recall that, for an arbitrary orientable closed smooth $3$-manifold, the inclusion map from the diffeomorphism group into the homeomorphism group is a weak homotopy equivalence.
(This is a result by Cerf~\cite{Ce68}, combined with the solution to the Smale conjecture by Hatcher~\cite{Ha83}.
See~\cite{Hat80}.)

As noted by Pardon~\cite[Subsection~2.1]{Pa19},
we have an exact sequence
\begin{align}
\label{eq: exact Diff}
1 \to \Diff(X,\del) \to \Diff(X) \to \Diff(Y),
\end{align}
where the image of the last map is a union of connected components.
Similarly we have
\begin{align}
\label{eq: exact Homeo}
1 \to \Homeo(X,\del) \to \Homeo(X) \to \Homeo(Y).
\end{align}
These exact sequences induce long exact sequences of homotopy groups, although the final maps on $\pi_{0}$ may not be surjections.
A natural termwise inclusion from \eqref{eq: exact Diff} to \eqref{eq: exact Homeo} gives rise to a commutative diagram between two long exact sequences.
Now we can deduce from the fact in dimension $3$ explained in the last paragraph and \cref{cor: rel Diff Homeo ap} that $\Diff(X) \inc \Homeo(X)$ is not a weak homotopy equivalence  with the assistance of the five lemma.
More precisely, we may obtain estimates for $n$ from those in \cref{cor: rel Diff Homeo ap}, but note that the new estimates may be weaker than those in \cref{cor: rel Diff Homeo ap} at most $1$.
\end{proof}

Here we give the proof of \cref{theo: not surj on alg aut}:

\begin{proof}[Proof of \cref{theo: not surj on alg aut}]
\Cref{theo: main app} implies that there exists a non-smoothable $\Homeo(X,\del)$-bundle
 $X \to E \to S^{1}$.
 This implies that \eqref{eq: pizero del} is not a surjection.
 The remaining statement follows from this and the fact that $\Diff(Y) \inc \Homeo(Y)$ is a weak homotopy equivalence, with the assistance of the four lemma as well as the proof of \cref{cor: Diff Homeo absolute ap}.
 (Note that, however, we can detect exactly $\pi_{0}$ not like the statement for $\Diff(X) \inc \Homeo(X)$ in \cref{cor: Diff Homeo absolute ap}.)
\end{proof}

\begin{rem}
\label{rem: non surj closed}
It would be interesting to compare \cref{theo: not surj on alg aut} with the situation for closed $4$-manifolds.
For a closed smooth $4$-manifold $X$,
\begin{align}
\label{eq: pi zero closed 1}
\pi_{0}(\Diff(X)) \to \pi_{0}(\Homeo(X))
\end{align}
is often a surjection by Wall's theorem~\cite{Wa64} on the realizability of elements of $\Aut(H^{2}(X;\Z))$ by diffeomorphisms and Quinn's theorem~\cite{Qu86}, which shows that
\[
\pi_{0}(\Homeo^{+}(X)) \to \Aut(H^{2}(X;\Z))
\]
is an isomorphism as far as $X$ is simply-connected.
There are few known examples of closed smooth $4$-manifolds $X$ for which \eqref{eq: pi zero closed 1} are not surjections: the first example is a $4$-manifold homeomorphic to $\CP^{2}\#n(-\CP^{2})$ for $n>9$ by Friedman--Morgan~\cite{FM88}.
A $K3$ surface is also an example by a result by Donaldson~\cite{Do90}, and in fact so is every homotopy $K3$ surface, which one can check using a result by Morgan and Szab{\'o}~\cite{MS97}.
It follows from Baraglia's constraint~\cite[Theorem 1.1]{Ba19} that a $4$-manifold homeomorphic to $\CP^{2} \# n(-\CP^{2})$ with $n>9$ or an Enriques surface is also an example, and so is a stabilization of such a $4$-manifold by the connected sum with some non-simply-connected $4$-manifolds by Nakamura and the first author \cite[Corollary~1.6]{KN20}.
\end{rem}
 
\subsection{Examples} 
  \label{subsec: Other applications and examples}
In this subsection we give examples for  the main applications \cref{cor: rel Diff Homeo ap,cor: Diff Homeo absolute ap} as \cref{ex: first ex}, and examples for \cref{theo: not surj on alg aut}, which is a specialization of the main applications for small $b^{+}$, as \cref{ex: first first ex,ex: Ex of non surj1.5,ex: Ex of non surj2 rev,ex: Ex of non surj2,ex: Ex of non surj3}.
In \cref{ex: Ex of non surj2 rev} we give an example which is detected by the invariant $\gamma$, but cannot be detected by the Fr{\o}yshov invariant $\delta$.
In \cref{ex: Ex of non surj2} we give an example detected by the invariants $\beta$, not by $\gamma$ or $\delta$.
In \cref{ex: Ex of non surj3} we give an example detected by the invariants $\alpha$, not by $\beta$, $\gamma$ or $\delta$.

 First let us consider examples for \cref{cor: rel Diff Homeo ap,cor: Diff Homeo absolute ap}.
As mentioned in \cref{rem: huge ex}, we can easily find a huge number of examples of $(X,Y)$ to which these main applications can apply: just find $(X,Y)$ with $\sigma(X)<-8$ and $\delta(Y) \leq 0$. 
Specializing to the case that $\delta(Y)=0$, this can be regarded as an analog of the assumption $| \sigma (X) | > 8$ of Baraglia's constraint \cite[Corollary 1.9]{Ba19} for closed $4$-manifolds.
However, in our situation, we may obtain examples of $(X,Y)$ with $|\sigma(X)| \leq 8$ thanks to the assistance of the Fr{\o}yshov invariant.
Let us note such an example:
 \begin{ex}
\label{ex: first first ex}
 Let $n \geq 1$ and set $Y = -\Sigma(2,3,12n-1)$.
Let $X$ be an oriented spin compact simply-connected smooth $4$-manifold bounded by $Y$ with the intersection form
\begin{align*}
 \left(
    \begin{array}{cc}
     0  & 1  \\
      1 & 0  \\
    \end{array}
  \right).
\end{align*}
An example of such $X$ is the nucleus $N(2n)$ inside the elliptic surface $E(2n)$
(see, for example, \cite[Subsection~5.3]{Ma14}), and one may take also exotic nuclei as examples. 
 Since we have $\delta(Y)=-1$, the pair $(X,Y)$ satisfies the assumption (2) of \cref{theo: not surj on alg aut}, and thus we have that
\[
\pi_0(\operatorname{Diff}  (X , \partial )) \to   \pi_0(\operatorname{Homeo}  (X , \partial ))
\]
and
\[
\pi_0(\operatorname{Diff}(X)) \to   \pi_0(\operatorname{Homeo}  (X))
\]
are not surjections. 
\end{ex}

\begin{rem}
\label{rem: first ex}
To a $4$-manifold $X'$ obtained as the boundary connected sum of $X$ in \cref{ex: first first ex} with any contractible $4$-manifold $W$ with integral homology $3$-sphere boundary, we may still apply \cref{theo: not surj on alg aut} and conclude that 
\[
\pi_0(\operatorname{Diff}  (X' , \partial )) \to   \pi_0(\operatorname{Homeo}  (X' , \partial ))
\]
and
\[
\pi_0(\operatorname{Diff}(X')) \to   \pi_0(\operatorname{Homeo}  (X'))
\]
are not surjections.
This is because we have $\delta(\del W)=0$ and $\delta$ is additive under connected sum.
Such a remark applies also to many of examples below.
\end{rem}
 
Let us give a remark on comparisons between various Fr{\o}yshov-type invariants.
The authors were informed by Ciprian Manolescu of the content of this remark.

\begin{rem}
\label{rem: comparison between Froyshov type invariants}
The following fact is pointed out in \cite[Remark 1.1]{LRS18}. 
In the work of Kutluhan, Lee, and Taubes \cite{KLTI}, \cite{KLTII}, \cite{KLTIII}, \cite{KLTIV}, \cite{KLTV}, alternatively, the work of Colin, Ghiggini, and Honda \cite{CGHI} \cite{CGHII} \cite{CGHIII}  and Taubes \cite{Ta10},  it is proved that the monopole Floer homology and the Heegaard Floer homology in coefficients $\Z$ are isomorphic to each other.
 In particular, with $\F$-coefficients, we also have an isomorphism between the monopole Floer homology and the Heegaard Floer homology. Moreover, the $\Q$-gradings are compared in \cite{RG18}, \cite{CD13} and \cite{HR17}.  This proves 
\[
\frac{1}{2} d ( Y, \mathfrak{t}, \F  ) =   -h ( Y, \mathfrak{t},  \F   ), 
\]
where $d ( Y, \mathfrak{t}, \F  )$ is the correction term of Heegaard Floer homology defined over $\F$-coefficient and $h ( Y, \mathfrak{t},  \F   )$ is the monopole Fr\o yshov invariant defined over $\F$-coefficient.

On the other hand, in \cite{LM18}, Lidman and Manolescu gave a grading preserving isomorphism between the $S^1$-equivariant cohomology of $SWF(Y, \mathfrak{t})$ and the monopole Floer homology over $\Z$. This proves 
\[
-h ( Y, \mathfrak{t},  \F   ) = \delta (Y, \mathfrak{t}). 
\]
Summarizing the results above, we have 
\begin{align*}
\frac{1}{2} d ( Y, \mathfrak{t}, \F  ) =  \delta (Y, \mathfrak{t}).
\end{align*}
The equality enables us to calculate the invariant $\delta$ by a combinatorial way. 
\end{rem}
  
  Next we provide another family of examples satisfying the assumption of \cref{theo: not surj on alg aut} coming from surgeries of knots in $S^3$. 
\begin{ex}
\label{ex: Ex of non surj}
Let $K$ be any knot in $S^3$. Since the $(+1)$-surgery $S^3_1 (K)$ of $K$ admits a positive-definite bounding $W_1(K)$ as the trace of the $(+1)$-surgery on $K$, we always have $\delta (S_1 (K) ) \leq 0$. 
We suppose that 
\begin{align}
\label{eq: ex neg sur}
\delta (S_1 (K) ) < -1,
\end{align}
where we shall give concrete examples of such $K$ below.
We define a pair $(W_K,  Y_K)$ as the boundary connected sum of $(W_1 (K), S_1 (K))$ and a simply-connected $(-E_8)$-bounding of $\Sigma(2,3,5)$.
Note that $b^+ (W_K) =1$, $\sigma(W_{K}) < 0$, and $W_K$ is simply-connected and the intersection form of $W_K$ is indefinite. Therefore the pair $(W_K,  Y_K )$ satisfies the assumption (2) of \cref{theo: not surj on alg aut}, and thus we have that
\[
\pi_0(\operatorname{Diff}  (W_K , \partial )) \to   \pi_0(\operatorname{Homeo}  (W_K , \partial ))
\]
and
\[
\pi_0(\operatorname{Diff}  (W_K)) \to   \pi_0(\operatorname{Homeo}  (W_K))
\]
are not surjections. 

In order to find a concrete family of examples of $K$ with \eqref{eq: ex neg sur}, 
we consider
\[
K = T(2,2n-1) 
\]
for any positive integer $n$, where $T(p,q)$ denotes the $(p,q)$-torus knot.
It is mentioned in \cite{Fr04} that $-S^3_1 (T(2,2n-1) )= \Sigma(2,2n-2, 4n-3)$ has 
\[
\Gamma_{4n} = \Set{ \sum_{1 \leq i \leq 4n}  x_i e_i \in \R^{4n} | \sum x_i \in 2\Z, \ 2x_i \in \Z, x_i -x_j \in \Z }
\]
as the negative-definite intersection from of the minimal resolution $W_{4n}$, where $\{e_i\}$ is an orthonormal basis of $\R^{4n}$. 
Then by using an inequality by Fr{\o}yshov~\cite{Fr96} for $W_{4n}$, which is the same as \cref{theo: main theo} for $B=\{\pt\}$, we obtain a family of estimates 
\[
 \left\lfloor \frac{n}{2} \right\rfloor \leq  \delta(\Sigma(2,2n-1, 4n-3) ). 
 \]
 This proves 
 \[
  \delta(S^3_1 (T(2,2n-1) ) \leq -   \left\lfloor \frac{n}{2} \right\rfloor 
  \]

 We can see that for any positive integer $n \geq 4$, 
 \[
 T(2,2n-1) 
 \]
 satisfies \eqref{eq: ex neg sur}.  
\end{ex}

Let us also give an example which is detected by the condition (3) of \cref{theo: not surj on alg aut}:
  
\begin{ex}
\label{ex: Ex of non surj1.5}
For $n \geq 1$, set $Y=-\Sigma(2,3,12n-5)$.
It is known that $\delta(Y)=0$ and $\mu(Y)=1$.
(See, for example, \cite[Subsection~3.8]{Ma16}.)
Note that $Y$ bounds an oriented compact simply-connected smooth spin $4$-manifold $X$ having the following intersection form (see, for example, \cite[Subsection~5.3]{Ma14}):
\begin{align*}
 (-E_8)  \oplus 
 \left(
    \begin{array}{cc}
     0  & 1  \\
      1 & 0  \\
    \end{array}
  \right),
\end{align*}
We have $b^{+}(X)=1$ and $-\sigma(X)/8=1$.
Hence $(X,Y)$ satisfies the assumption (3) of \cref{theo: not surj on alg aut}, and thus
we have that
\[
\pi_0(\operatorname{Diff}  (X , \partial )) \to   \pi_0(\operatorname{Homeo}  (X , \partial ))
\]
and
\[
\pi_0(\operatorname{Diff}  (X)) \to   \pi_0(\operatorname{Homeo}  (X))
\]
are not surjections.
%
\end{ex}

Next let us give an example which is detected by the invariant $\gamma$, but cannot be detected by the Fr{\o}yshov invariant $\delta$.

\begin{ex}
\label{ex: Ex of non surj2 rev}
Note that $2\Sigma(2,311)$ bounds an oriented compact simply-connected smooth spin $4$-manifold $W$ with the intersection form
\begin{align*}
2(-E_{8}) \oplus 
 \left(
    \begin{array}{cc}
     0  & 1  \\
      1 & 0  \\
    \end{array}
  \right).
\end{align*}
An example of such $W$ can be found as a comdimension-0 submanifold with boundary of a $K3$ surface.
Indeed, a $K3$ surface contains three disjoint nuclei $\bigsqcup_{3} N(2)$, and the boundary is given by $\bigsqcup_{3} (-\Sigma(2,3,11))$ \cite{GM93}.
Remove two of the three nuclei from $K3$, and take an inner connected sum of the two boundary components of $K3 \setminus \bigsqcup_{2} {\rm Int}N(2)$,
then we get an example of such $W$.

Let $Y=2\Sigma(2,3,11)\#n\Sigma(2,3,5)$ for $n \geq 0$, and let $X$ be the boundary connected sum of $W$ with a simply-connected $n(-E_{8})$-bounding of $n\Sigma(2,3,5)$.
Obviously $X$ is spin, $b^{+}(X)=1$ and $-\sigma(X)/8=n+2$.
On the other hand, as computed by Manolescu~\cite[Subsection~3.8]{Ma16}, we have
\[
\beta(\Sigma(2,3,11))=0, \quad \alpha(\Sigma(2,3,5))=1.
\]
It follows from the connected sum formulae on $\alpha, \beta, \gamma$ by Stoffregen~\cite[Theorem 1.1]{Sto172} that 
\[
\gamma(Y) 
\leq \gamma(2\Sigma(2,3,11)) + n\alpha(\Sigma(2,3,5))
\leq 2\beta(\Sigma(2,3,11)) + n\alpha(\Sigma(2,3,5)) =n,
\]
hence the assumption (4) of \cref{theo: not surj on alg aut} is satisfied for $X$ and $Y$.
Thus
we have that
\[
\pi_0(\operatorname{Diff}  (X , \partial )) \to   \pi_0(\operatorname{Homeo}  (X , \partial ))
\]
and
\[
\pi_0(\operatorname{Diff}  (X)) \to   \pi_0(\operatorname{Homeo}  (X))
\]
are not surjections.

It is worth noting that this example cannot be detected by $\delta$:
because of $\delta(\Sigma(2,3,7))=0$, we have $\delta(Y)=n$, and hence $-\sigma(X)/8=\delta(Y)$.
\end{ex}

\begin{rem}
In \cref{ex: Ex of non surj2 rev},
if we take $n=0$ and $W$ as a codimension-0 submanifold with boundary of $K3$, 
the result on $\pi_{0}$ can be deduced from a classical theorem by Donaldson~\cite{Do90} regarding a closed $4$-manifold:
 a $K3$ surface does not admit a diffeomorphism which reverses orientation of $H^{+}(K3)$.
 
 
 The same remark applies also to the following \cref{ex: Ex of non surj2,ex: Ex of non surj3}.
 \end{rem}

Next let us give an example detected by the invariant $\beta$.
  
\begin{ex}
\label{ex: Ex of non surj2}
Note that $\Sigma(2,3,11)$ bounds an oriented compact simply-connected smooth spin $4$-manifold $W$ with the intersection form
\begin{align*}
 2(-E_8)  \oplus 
 2\left(
    \begin{array}{cc}
     0  & 1  \\
      1 & 0  \\
    \end{array}
  \right).
\end{align*}
For example, such $W$ can be obtained as the complement of $N(2)$ in a $K3$ surface.
 (See, for example, \cite[Subsection~5.3]{Ma14}.)
Let $Y=\Sigma(2,3,11)\#n\Sigma(2,3,5)$ for $n \geq 1$, and let $X$ be the boundary connected sum of $W$ with a simply-connected $n(-E_{8})$-bounding of $n\Sigma(2,3,5)$.
Obviously $X$ is spin, $b^{+}(X)=2$ and $-\sigma(X)/8=n+2$.
On the other hand, as computed by Manolescu~\cite[Subsection~3.8]{Ma16}, we have
\[
\beta(\Sigma(2,3,11))=0, \quad \alpha(\Sigma(2,3,5))=1.
\]
It follows from the connected sum formula by Stoffregen~\cite[Theorem 1.1]{Sto172} that 
\[
\beta(Y) \leq \beta(\Sigma(2,3,11)) + n\alpha(\Sigma(2,3,5)) =n,
\]
hence the assumption (5) of \cref{theo: not surj on alg aut} is satisfied for $X$ and $Y$.
Thus we have that
\[
\pi_0(\operatorname{Diff}  (X , \partial )) \to   \pi_0(\operatorname{Homeo}  (X , \partial ))
\]
and
\[
\pi_0(\operatorname{Diff}  (X)) \to   \pi_0(\operatorname{Homeo}  (X))
\]
are not surjections.

Note that $(X,Y)$ satisfies also (4) of \cref{cor: rel Diff Homeo ap}, indeed $\gamma(Y) \leq \gamma(\Sigma(2,3,11)) + n\alpha(\Sigma(2,3,5))=n$.
However it tells us only weaker information than the above result on $\pi_{0}$ detected by $\beta$: one can say only that $\pi_n(\operatorname{Diff}  (X , \partial )) \to   \pi_n(\operatorname{Homeo}  (X , \partial ))$ is not an isomorphism for at least one of $n \in \{0,1\}$, and that $\pi_n(\operatorname{Diff}  (X)) \to   \pi_n(\operatorname{Homeo}  (X))$ is not an isomorphism for at least one of $n \in \{0,1,2\}$.
\end{ex}

Let us give an example detected by the invariant $\alpha$.
  
\begin{ex}
\label{ex: Ex of non surj3}
Let $X$ be the interior connected sum of a homotopy $K3$ surface with a simply-connected $n(-E_{8})$-bounding of $n\Sigma(2,3,5)$.
Then $Y = n\Sigma(2,3,5)$ is the boundary of $X$.
Obviously $X$ is spin, $b^{+}(X)=3$ and $-\sigma(X)/8=n+3$.
On the other hand, as noted in \cref{ex: Ex of non surj2}, we have $\alpha(\Sigma(2,3,5))=1$.
It follows from the connected sum formula by Stoffregen~\cite[Theorem 1.1]{Sto172} that 
\[
\alpha(Y) \leq n\alpha(\Sigma(2,3,5)) =n,
\]
hence the assumption (6) of \cref{theo: not surj on alg aut} is satisfied for $X$ and $Y$.
Thus we have that
\[
\pi_0(\operatorname{Diff}  (X , \partial )) \to   \pi_0(\operatorname{Homeo}  (X , \partial ))
\]
and
\[
\pi_0(\operatorname{Diff}  (X)) \to   \pi_0(\operatorname{Homeo}  (X))
\]
are not surjections.
As in \cref{ex: Ex of non surj2}, 
$(X,Y)$ satisfies also (4) and (5) of \cref{cor: rel Diff Homeo ap}, but it tells us only weaker information than the above result on $\pi_{0}$ detected by $\alpha$.
\end{ex}

At the end of this section, we use spin boundings constructed by Saveliev\cite{Sa98} : 
 \begin{ex}
\label{ex: first ex}
 We consider the Brieskorn homology 3-sphere $\Sigma(p,q,r)$ for a pairwise relatively prime triple of positive integers $  (p ,q, r)$.
 Since $\Sigma(2,3,5)$ admits a positive scalar curvature, one can see that 
 \[
 \delta ( \Sigma(2,3,5) ) =1 . 
 \]
 On the other hand, for an odd positive integer $k$ and an odd positive integer $q$ with $q \equiv 3 \mod 4$, in \cite{Sa98} Saveliev constructed a family of simply connected spin boundings $W'_{q,k}$ of $-\Sigma (2,q, 2qk+1) $ whose intersection forms are isomorphic to 
 \[
 \left(\frac{q+1}{4} \right) (-E_8)  \oplus \left(
    \begin{array}{cc}
     0  & 1  \\
      1 & 0  \\
    \end{array}
  \right).
\]
Set 
\[
Y_{ k} :=  ( - \Sigma(2,3,5) ) \# (-\Sigma (2,3, 6k+1) ) . 
\]
Since $\delta (-\Sigma (2,3, 6k+1) ) =0$, we have $\delta (Y_{ k}) = -1$. 
Then we consider the boundary connected sum, denoted by $W_k$, of a simply-connected $E_8$-bounding of $- \Sigma(2,3,5)$ with $W'_{3,k}$. Note that the intersection form of $W_k$ is isomorphic to 
\[
9 \left(
    \begin{array}{cc}
     0  & 1  \\
      1 & 0  \\
    \end{array}
  \right), 
  \]
and hence $\sigma(W_k)=0$. 
  Moreover, $ W_k$ is spin, simply-connected and 
  \[
 -1=\delta (Y_k) (\geq \gamma(Y_k )). 
  \]
This proves that $(W_k, Y_k)$ satisfies the assumption (4) in \cref{cor: rel Diff Homeo ap}.
Applying \cref{cor: rel Diff Homeo ap}, we have that 
  \[
  \pi_n(\operatorname{Diff}  (W_k , \partial )) \to   \pi_n(\operatorname{Homeo}  (W_k , \partial ))
  \]
  is not an isomorphism for some $n \in \{0, \cdots 8\}$, and
  \[
  \pi_n(\operatorname{Diff}  (W_k )) \to   \pi_n(\operatorname{Homeo}  (W_k))
  \]
  is not an isomorphism for some $n \in \{0, \cdots 9\}$ by \cref{cor: Diff Homeo absolute ap}.
 \end{ex}

\section{Appendix}
 
In \cref{subsection: Proof of first main theo}, the proof of \cref{theo: main theo}, we use the following version of the equivariant Thom isomorphism several times. We give equivariant Thom isomorphism theorem with local coefficients. Although we use only equivariant cohomologies in coefficients $\F=\Z/2$,
Baraglia~\cite{Ba19} made use of equivariant cohomologies in local coefficient and used the Thom isomorphism of the form \cref{Thom}.

 Let $G$ be a compact Lie group. 
 Let $B$ be a paracompact Hausdorff space and $\pi_W: W\to B$ a $G$-vector bundle over $B$. Here we regard $B$ as a $G$-space with the trivial action.
Take $\rho$ be a $A$-valued local system on $B$ for a fixed Abelian group $A$. 

We define the local coefficient equivariant cohomology by 
\[
H^*_{G}(B; \rho ) := H^*( B\times BG; \pr^* \rho ), 
    \]
    where $\pr : B\times BG \to B$ is the projection. 
    
We first consider the vector bundle 
 \begin{align}\label{oo}
 p : W_{hG}:= EG \times_{G} W \to (EG \times B)/G := B_{hG}. 
\end{align}
 Note that 
$ ( EG \times D(W), EG \times S(W)) $ is $G$-homeomorphic to the pair $( D(EG \times W) , S(EG \times S(W)) )$. 
 This proves
 \[
 {H}^* ( EG \times_G D(W), EG \times_G S(W) ) \cong {H}^* (D(EG \times_G W) , S(EG \times_G S(W)) ). 
 \]
 Then, for any local system $\rho$ on $B$, we define the coefficient equivariant cohomology for the Thom space 
 \[
 {H}^{*}_{G} (D(W), S(W); \pi_W^*\rho  )
 \]
 \[
 :=  {H}^* (D(EG \times_G W) , S(EG \times_G S(W)) ; \pi_W^*\rho  ). 
 \]
 \begin{lem}We have the following isomorphisms. 
 \label{Thom} 
 \begin{itemize}
   \item[(i)] 
   The multiplication of an element 
\[
\tau_G(W) \in \tilde{H}^{\rank W}_{G} (\Th(W); \F )
\]
 gives an isomorphism 
  \[
  H^*_{G}(B; \F ) \to \widetilde{H}^{*+\rank W}_{G} (\operatorname{Th}(W); \F ). 
  \] 
 \item[(ii)] 
 Suppose $G$ is connected. 
The multiplication of an element 
\[
\tau_G(W) \in H^{\rank W} _{G} (D(W), S(W); \pi_W^*w_1 (W) )
\]
 gives an isomorphism 
  \[
  H^*_{G}(B; \rho  ) \to {H}^{*+\rank W}_{G} (D(W), S(W); \pi_W^*\rho \otimes  \pi_W^*w_1 (W ) ),
  \] 
  where $w_1(W)$ is the orientation local system of $W$. 
  \end{itemize}
 \end{lem}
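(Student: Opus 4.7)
The plan is to reduce both parts to the classical (non-equivariant) Thom isomorphism applied to the real vector bundle $p:W_{hG}\to B_{hG}$ from~\eqref{oo}. By the definitions recalled just above the lemma, $H^*_G(B;\rho)$ is, by design, the ordinary cohomology $H^*(B_{hG};\pr^*\rho)$ of $B_{hG}=EG\times_G B$, while $H^*_G(D(W),S(W);\pi_W^*\rho)$ is the ordinary relative cohomology of the pair $(D(W_{hG}),S(W_{hG}))$ with coefficients pulled back from $B$. Since the $G$-action on $B$ is trivial, there is a canonical identification $B_{hG}\cong BG\times B$, and $W_{hG}\to B_{hG}$ is a genuine real vector bundle of rank $n=\rank W$.

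For (i), coefficients lie in $\F=\Z/2$, for which every real vector bundle is Thom-orientable. The standard Thom isomorphism for $W_{hG}\to B_{hG}$ then gives the desired isomorphism, with $\tau_G(W)$ taken to be the mod-$2$ Thom class of $W_{hG}$.

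For (ii), the only non-routine step is to identify the orientation local system $w_1(W_{hG})$ on $B_{hG}\cong BG\times B$. Restricting to $\{\text{pt}\}\times B$ recovers $w_1(W)$, and restricting to $BG\times\{b_0\}$ for any $b_0\in B$ yields $w_1(EG\times_G W_{b_0}\to BG)$; since $G$ is connected, the linear action on the fiber $W_{b_0}$ factors through $SO(n)$, so this class vanishes. The K\"unneth splitting $H^1(BG\times B;\Z/2)\cong H^1(BG;\Z/2)\oplus H^1(B;\Z/2)$ then forces $w_1(W_{hG})=\pr_B^*w_1(W)$, where $\pr_B:BG\times B\to B$ is the projection. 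Applying the classical Thom isomorphism with local coefficients $\pr^*\rho$ to the bundle $W_{hG}\to B_{hG}$ now yields (ii), with $\tau_G(W)$ defined to be the associated twisted Thom class.

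The only genuine input is the identification $w_1(W_{hG})=\pr_B^*w_1(W)$ in (ii); once connectedness of $G$ is used to rule out any Stiefel--Whitney contribution from the $BG$ factor, both halves of the lemma become immediate consequences of textbook Thom isomorphisms.
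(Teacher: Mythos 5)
Your proof is correct and follows the same strategy as the paper: reduce to the non-equivariant Thom isomorphism for the honest vector bundle $W_{hG}\to B_{hG}\cong BG\times B$. You additionally spell out why $w_1(W_{hG})$ is pulled back from $B$ (connectedness of $G$ kills any $BG$-contribution in $H^1$), a step the paper merely asserts; this is a welcome clarification, though a small shortcut is that $H^1(BG;\Z/2)\cong\Hom(\pi_0(G),\Z/2)=0$ already when $G$ is connected, making the restriction argument unnecessary.
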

 We give a sketch of proof of \cref{Thom}. 
 \begin{proof}For the $\F$-coefficient, the usual Thom isomorphism theorem implies that there exists an element $\tau_G(W) \in {H}^* (D(EG \times_G W) , S(EG \times_G S(W)) ; \F) )$ such that 
 \[
  \cup \tau_G(W): H^*( B\times BG; \F ) \to  {H}^{*+\rank W} (D(EG \times_G W) , S(EG \times_G S(W)) ; \F) 
 \]
 is an isomorphism. This proves (i).

 For the second statement, we use the local coefficient version of the Thom isomorphism theorem. 
An important point is that the orientation local system of \eqref{oo} is the same as the $\pr^* w_1(W)$.
 Then we have an element 
 \[
 \tau_G(W) \in {H}^* (D(EG \times_G W) , S(EG \times_G S(W)) ;  \pi_W^* w_1(W) )
 \]
  such that 
 \[
  \cup \tau_G(W): H^*( B\times BG; \pr^* \rho )
  \]
  \[
   \to  {H}^{*+\rank W}  (D(EG \times_G W) , S(EG \times_G S(W)) ; \pi_W^*\rho \otimes  \pi_W^* w_1(W) )
  \]
gives an isomorphism.
 \end{proof}

\bibliographystyle{plain}
\bibliography{tex}

\end{document}